\documentclass[12pt]{amsart}

\usepackage{verbatim, amssymb, enumitem, mathtools}

\usepackage[breaklinks=true,colorlinks=true,linkcolor=blue,citecolor=red,urlcolor=blue,psdextra,pdfencoding=auto]{hyperref}
\allowdisplaybreaks

\usepackage{orcidlink}

\renewcommand \a{\alpha}
\renewcommand \b{\beta}

\newcommand \la{\lambda}

\newcommand \id{\mathrm{id}}
\newcommand \br{\mathbb{R}}
\newcommand \bc{\mathbb{C}}
\newcommand \bh{\mathbb{H}}
\newcommand \bo{\mathbb{O}}

\newcommand \Span{\operatorname{Span}}

\newcommand \db{\partial}
\newcommand \SO{\mathrm{SO}}
\newcommand \Sp{\mathrm{Sp}}
\newcommand \SU{\mathrm{SU}}
\newcommand \SL{\mathrm{SL}}
\newcommand \Ff{\mathrm{F}_4}
\newcommand \Spin{\mathrm{Spin}}
\newcommand \rJ{\mathrm{J}}
\newcommand \rL{\mathrm{L}}
\newcommand \rR{\mathrm{R}}

\newcommand \cP{\mathcal{P}}

\newcommand \cV{\mathcal{V}}

\newcommand \cH{\mathcal{H}}
\newcommand \cL{\mathcal{L}}

\newcommand \cK{\mathcal{K}}

\newcommand \cS{\mathcal{S}}
\newcommand \cQ{\mathcal{Q}}
\newcommand \ocS{\overline{\mathcal{S}}}

\newcommand \sK{\mathsf{K}}

\newcommand \sQ{\mathsf{Q}}

\newcommand\g{\mathfrak g}

\newcommand \so{\mathfrak{so}}
\newcommand \spg{\mathfrak{sp}}

\newcommand \ug{\mathfrak{u}}

\newcommand \ri{\mathrm{i}}

\newcommand \Sym{\operatorname{Sym}}

\newcommand \diag{\operatorname{diag}}

\newcommand{\on}{\overline{\nabla}}
\newcommand{\od}{\overline{\delta}}
\newcommand{\oi}{\overline{\iota}}
\newcommand{\ot}{\overline{\tau}}
\newcommand{\orJ}{\overline{\rJ}}

\newcommand \<{\langle}
\renewcommand \>{\rangle}
\newcommand \ip{\<\cdot,\cdot\>}

\newcommand \GL{\mathrm{GL}}

\newtheorem{theorem}{Theorem}
\newtheorem*{theorem*}{Theorem}

\newtheorem*{corollary*}{Corollary}
\newtheorem*{conj*}{Conjecture}
\newtheorem{lemma}{Lemma}
\newtheorem{proposition}{Proposition}
\newtheorem*{prop*}{Proposition}

\theoremstyle{definition}

\newtheorem*{definition*}{Definition}

\theoremstyle{remark}
\newtheorem{remark}{Remark}

\newtheorem*{notation*}{Notation}
\newtheorem*{algorithm*}{Algorithm}
\newtheorem*{example*}{Example}

\begin{document}

\title{Killing tensors on projective spaces} 

\author{Owen Dearricott \orcidlink{0000-0002-2984-9230}}
\address{Department of Mathematical and Physical Sciences, La Trobe University, Melbourne, Victoria, 3086, Australia}
\email{O.Dearricott@latrobe.edu.au}

\author{Yuri Nikolayevsky \orcidlink{0000-0002-9528-1882}}
\address{Department of Mathematical and Physical Sciences, La Trobe University, Melbourne, Victoria, 3086, Australia}
\email{y.nikolayevsky@latrobe.edu.au}

\thanks {The authors were supported by ARC Discovery grant DP210100951.}

\subjclass{53C35, 53B20}

\keywords{Killing tensor field, quaternionic projective space}

\begin{abstract}
A Killing tensor field on a Riemannian space corresponds to an integral of the geodesic flow polynomial in momenta. A (contravariant) Killing tensor field is called \emph{decomposable} if it is a polynomial in Killing vector fields. While all Killing tensor fields on the spaces of constant curvature and on the complex projective space are decomposable, there is an explicitly constructed family of indecomposable quadratic Killing tensor fields on the quaternionic projective spaces $\bh P^n, \, n \ge 3$. We prove that the algebra of Killing tensor fields on the quaternionic projective space is generated by Killing vector fields and these indecomposable quadratic Killing tensor fields. We also give another proof of the fact that the algebra of Killing tensor fields on the complex projective space is generated by Killing vector fields.
\end{abstract}

\maketitle

\section{Introduction}
\label{s:intro}

In the past few years, a substantial progress has been made towards understanding Killing tensor fields on Riemannian symmetric spaces~\cite{E, MN1, MN2, EL, MN3, MNN}.

There are three equivalent definitions of Killing tensor fields. Namely, a \emph{covariant Killing tensor field} $K=K(x)_{i_1\dots i_d}$ of rank $d \ge 1$ on a Riemannian manifold $(M,ds^2=g_{ij}dx^i dx^j)$ is a covariant symmetric tensor field that satisfies the Killing equation
\begin{equation}\label{eq:defK}
  K_{(i_1\dots i_d,j)}=0,
\end{equation}
where the comma denotes the covariant derivative and the parentheses denote the symmetrisation by all indices. An equivalent definition is that the function $\xi\in T_xM \mapsto K(x)_{i_1\dots i_d} \xi^{i_1} \cdots \xi^{i_d}$ polynomial in the velocities is an integral of the geodesic flow of $(M,ds^2)$: for any naturally parameterised geodesic $s \mapsto \gamma(s)$ of $(M,ds^2)$, the function $s \mapsto K(\gamma(s))_{i_1\dots i_d} (\dot{\gamma}(s))^{i_1} \dots (\dot{\gamma}(s))^{i_d}$ is constant. The third equivalent definition is as follows. A contravariant symmetric tensor field $K^\sharp$ of rank $d$ defines a function $\cK$ on the cotangent bundle $T^*M$ which is a homogeneous polynomial of degree $d$ in the momenta $p_i$. For the contravariant metric tensor $g^{ij}$, this construction gives twice the Hamiltonian $\frac12 \sum_{ij} g^{ij}p_ip_j$. A contravariant symmetric tensor field $K^\sharp$ is Killing if and only if the corresponding function $\cK$ on $T^*M$ \emph{Poisson commute} with the Hamiltonian.

Contravariant Killing tensor fields of rank $d=1$ are called \emph{Killing vector fields}; it is well known that they correspond to infinitesimal isometries of the space $(M,ds^2)$, that is, are in one-to-one correspondence with the Lie algebra of the isometry group of $(M,ds^2)$. There is no such nice geometric description for Killing tensor fields of rank $d \ge 2$.

The space of all contravariant Killing tensor fields on $(M,ds^2)$ forms an associative, commutative, graded algebra $\sK(M)$ with respect to the symmetric tensor product. It contains a subalgebra generated by Killing vector fields. The elements of that subalgebra are called \emph{decomposable}. As the metric tensor $g_{ij}$ itself is trivially Killing, a natural question to ask is \emph{whether the whole algebra $\sK(M)$ is generated by Killing vector fields and the metric tensor}. For a general Riemannian manifold the answer is in the negative; an example is given by a Liouville metric $ds^2=(\la(x)+\mu(y)) (dx^2 + dy^2)$ and has been already discovered by Darboux.

On the other hand, any Killing tensor field on a space of constant curvature is decomposable~\cite{Tho,ST1,Tak}, and the structure of the algebra of Killing tensor fields is completely understood (note that for Riemannian symmetric spaces, we do not need to single out the metric tensor, as it is also decomposable). This result and the subsequent proof of the fact that Killing tensor fields on $\bc P^n$ are decomposable~\cite[Corollary~5]{E}, \cite[Theorem~2.2]{ST2}, led to the following modification of the original question: \cite[Question~3.9]{BMMT}: ``\emph{Is every Killing tensor field on a symmetric space decomposable?}'' There are two reasons making the class of symmetric spaces especially nice and natural in the context of the study of Killing tensors. The first one is the fact that symmetric spaces have a large isometry group, and hence a rich algebra of decomposable Killing tensor fields. The second, less obvious, reason is that for general Riemannian spaces, the integrability conditions for the overdetermined system~\eqref{eq:defK} are expressed in terms of the curvature tensor of $(M,ds^2)$ and its covariant derivatives; for symmetric spaces, the latter are all zeros.

Throughout the paper, we will almost exclusively work with covariant tensor fields; clearly, the algebras of covariant and of contravariant Killing tensor fields on a Riemannian manifold are isomorphic.

At present, we know that
\begin{itemize}
  \item
  all Killing tensor fields on the spaces of constant curvature~\cite{Tho,ST1,Tak},

  \item
  all Killing tensor fields on the complex projective spaces~\cite[Corollary~5]{E}, \cite[Theorem~2.2]{ST2},

  \item
  quadratic Killing tensor fields on some classical symmetric spaces: the classical groups~\cite{MNN}, the real Grassmannians $\SO(p+q)/\SO(p) \times \SO(q)$, the spaces $\SU(n)/\SO(n)$~\cite{NN} and the space $\SU(6)/\Sp(3)$~\cite{EL}, and 

  \item
  quadratic Killing tensor fields on some exceptional symmetric spaces: the group $\mathrm{G}_2$ and the space $\mathrm{G}_2/\SO(4)$~\cite{NN}
\end{itemize}
are decomposable.

However, there exist indecomposable quadratic Killing tensor fields on the following symmetric spaces:
\begin{itemize}
  \item
  the Cayley projective plane $\bo P^2 = \Ff/\Spin(9)$ \cite{MN1}.

  \item
  the quaternionic projective spaces $\bh P^n=\Sp(n+1)/(\Sp(n)\Sp(1))$ with $n \ge 3$ \cite{MN1}.

  \item
  the space $\mathrm{E}_6/\mathrm{F}_4$~\cite{EL}. 
\end{itemize}

Indecomposable quadratic Killing tensor fields on the quaternionic projective spaces $\bh P^n, \, n \ge 3$, are constructed explicitly (see formula~\eqref{eq:hTS} in Section~\ref{s:HPn}) and are the (nonzero) elements of a certain irreducible $\Sp(n+1)$-module $\sQ$ of dimension $\frac16(n - 2)(n + 1)(2n + 1)(2n + 3)$. By~\cite[Theorem~4]{MN3}, any \emph{quadratic} Killing tensor field on $\bh P^n, \, n \ge 3$, is the sum of an element of $\sQ$ and of a decomposable one.

Our main result states that this holds for the whole algebra of Killing tensor fields:
\begin{theorem} \label{t:hpn}
The algebra of (covariant) Killing tensor fields on the quaternionic projective space $\bh P^n, \, n \ge 3$, is generated by Killing covector fields and the quadratic Killing tensor fields from $\sQ$. All Killing tensor fields on $\bh P^n, \, n \le 2$, are decomposable.
\end{theorem}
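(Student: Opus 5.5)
The plan is to pass from Killing tensors to $\Sp(n+1)$-modules and compare dimensions degree by degree. On a compact symmetric space $M=G/H$, a Killing tensor field of rank $d$ is determined by its $d$-jet at one point, and in fact by its value $K_o\in \Sym^d(T_oM)$ together with finitely many derivatives. Using the symmetric-space structure (all covariant derivatives of the curvature vanish), the Killing equation becomes an algebraic system. I would use the standard description of the space $\sK^d(M)$ of rank-$d$ Killing tensors as a $G$-module through its harmonic decomposition: $\sK^d(M)\subset \Gamma(\Sym^d T^*M)$, and by Frobenius reciprocity each irreducible $G$-module $V_\lambda$ occurs with multiplicity at most $\dim \mathrm{Hom}_H(V_\lambda,\Sym^d\mathfrak m)$, and in fact with the multiplicity cut out by the integrability conditions. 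The target is an upper bound on $\sK^d$ that equals the dimension of the subalgebra generated by $\g=\sp(n+1)$ and $\sQ$ in degree $d$.

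The key steps, in order, are as follows.

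\emph{First}, I would identify the decomposable part. The map $\Sym^d(\g)\to\sK^d$ has image isomorphic to $\Sym^d(\g)$ modulo the ideal of polynomials vanishing on the moment image. That image is the adjoint orbit of the ``rank one'' elements of $\sp(n+1)$, namely $\{x: x \text{ conjugate into } \sp(1)\subset\sp(1)\oplus\sp(n)\}$, i.e.\ the cone over the orbit through a highest-root-type element. Its coordinate ring is known: it decomposes as $\bigoplus_k V_{k\omega_2'}$-type modules (Kostant/Vinberg), with explicit highest weights.

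\emph{Second}, I would give the same explicit description with $\sQ$ adjoined. The algebra generated by $\g$ and $\sQ$ is the image of $\Sym(\g\oplus\sQ)$. I would identify the highest weights it contains in each degree: products of highest weight vectors of $\g$ (weight $2\varepsilon_1$) and of $\sQ$ (weight $\varepsilon_1+\varepsilon_2+\varepsilon_3+\varepsilon_4$, consistent with the dimension formula), which are nonzero Killing tensors.

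\emph{Third}, I would prove the upper bound: every Killing tensor of rank $d$ at $o$ is determined by data in an $H$-module. I would compute which $V_\lambda$ can occur by restricting to a maximal flat totally geodesic subspace or by using the highest weight vector. Concretely, a highest weight vector $K$ of $\sK^d$ restricted to the point $o$ and its Killing equation forces $\lambda$ to lie in the monoid generated by the weights found above, each with multiplicity one.

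The main obstacle is this third step. Showing that only those highest weights occur, with multiplicity at most one, requires analysing the prolongation of the Killing equation for $\bh P^n$, where the isotropy representation of $\Sp(n)\Sp(1)$ on $\bh^n$ is complicated. My first attempt would be induction on $d$. For a highest weight Killing tensor, I would differentiate along a Killing field and use that the symmetric part $\Sym^d$ restricted to the quaternionic line through a geodesic reduces to $\bc P^1$- and $S^4$-type computations. For $n\le 2$ the weight of $\sQ$ does not exist, which gives decomposability. The $\bc P^n$ case serves as a model for the same argument.
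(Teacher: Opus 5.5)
There is a genuine gap: the proposal never proves the inclusion that \emph{is} the theorem, namely that every rank-$d$ Killing tensor on $\bh P^n$ lies in the subalgebra generated by $\g=\spg(n+1)$ and $\sQ$.

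Your third step is exactly this statement, and you flag it yourself as the main obstacle. What you offer there is a list of tools, not an argument: restriction to a maximal flat (in rank one, a single geodesic), induction on $d$, and reduction to $\bc P^1$- and $S^4$-type computations. Frobenius reciprocity only bounds multiplicities in the space of all sections of $\Sym^d T^*\bh P^n$, where infinitely many $V_\lambda$ occur. The real constraint is the prolonged Killing equation, and you give no way to compute what it cuts out. So your list of admissible highest weights and your multiplicity-one claim are unsupported.

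The first two steps are also not usable as stated.
\begin{itemize}
\item The moment image is misidentified. The image of $T^*\bh P^n\to\g^*\cong\g$ is $G\cdot\mathfrak a$, where $\mathfrak a\subset\mathfrak m\cong\bh^n$ is spanned by an infinitesimal rotation in a quaternionic $2$-plane. That element is conjugate to $\diag(\ri,\ri,0,\dots,0)$, with centraliser $\mathrm{U}(2)\times\Sp(n-1)$. It is not conjugate into the $\spg(1)$ summand of $\spg(1)\oplus\spg(n)$.
\item Products of highest weight vectors only give Cartan components. For example, $\Sym^2\g$ contains $V_{2\varepsilon_1+2\varepsilon_2}$. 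Its highest weight vector, a Gram determinant on an isotropic $2$-plane, does not vanish on the moment image. So it gives decomposable quadratic Killing tensors whose weight is not a sum of copies of $2\varepsilon_1$ and $\varepsilon_1+\varepsilon_2+\varepsilon_3+\varepsilon_4$.
\item A dimension match would also need all relations among the generators, both on $S^{4n+3}$ and after descent (the map $K\mapsto\overline K$ has a nontrivial kernel). You do not address these.
\end{itemize}
The case $n\le 2$ inherits the gap. That the weight $\varepsilon_1+\dots+\varepsilon_4$ does not exist only says $\sQ=0$. It does not show that all Killing tensors are generated in low degree, or that the quadratic ones are decomposable.

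The missing idea, which removes any need for counting, is a surjective lift to the sphere. The paper shows that every Killing tensor $L$ on $\bh P^n$ equals $\overline K$ for some $\Sp(1)$-invariant Killing tensor $K$ on $S^{4n+3}$. It builds $K$ by adding to $\pi^*L$ components $L_{d-m}$ of type $\Sym^m(\cV)\otimes\Sym^{d-m}(\cH)$. These are defined recursively so that $\delta L_{d-m}=2m\,\rJ L_{d-m+1}$, using Lemma~\ref{l:bracketsH} and the invertibility of $\nu+a(2n+b)\,\id$.

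Killing tensors on the sphere are the $\SL(2)$-invariant polynomials in $(X,P)$. After complexification, the $\Sp(1)$-invariant ones are therefore the invariants of $\SO(4,\bc)\cong(\SL(2,\bc)\times\Sp(1,\bc))/\mathbb{Z}_2$ on $2n+2$ copies of $\bc^4$. By the First Fundamental Theorem these are generated by:
\begin{itemize}
\item inner products, which span the complexification of $\cL$ (the Killing covector fields);
\item $4\times 4$ determinants, which modulo quadratic forms in the inner products are the $T_{S_1,S_2}\in\cQ$.
\end{itemize}
Descending to $\bh P^n$ and invoking \cite[Theorem~4]{MN3} finishes the proof. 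That result writes every quadratic Killing tensor as an element of $\sQ$ plus a decomposable one, and makes all quadratic Killing tensors decomposable when $n\le 2$.
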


Our method also works for $\bc P^n$ giving yet another proof of the following result:

\begin{theorem}[{\cite{E}, \cite[Theorem~2.2]{ST2}}] \label{t:cpn}
Any Killing tensor field on the complex projective space $\bc P^n, \, n \ge 1$, is decomposable.
\end{theorem}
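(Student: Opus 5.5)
We plan to prove Theorem~\ref{t:cpn} by comparing dimensions of $\SU(n+1)$-modules. On one side is the space $\sK^d(\bc P^n)$ of Killing tensor fields of rank $d$; on the other is its decomposable subspace, the image of $\Sym^d(\su(n+1))$ under the symmetric product of Killing covector fields. Since $\bc P^n$ is a symmetric space $G/H$ with $G=\SU(n+1)$ and $H=\mathrm{S}(\mathrm{U}(n)\times\mathrm{U}(1))$, a Killing tensor field is determined by its $(d)$-jet at a point $o$. The integrability conditions for~\eqref{eq:defK} involve only the (parallel) curvature tensor. This gives an injective $G$-equivariant map from $\sK^d$ into a finite-dimensional space built from $\Sym^k(\mathfrak m^*)$, with $\mathfrak m=T_o\bc P^n\cong\bc^n$.

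\textbf{Step 1: an upper bound on $\sK^d$.} We apply Frobenius reciprocity to the $G$-module $\sK^d$. The space $\sK^d$ embeds into the sections of the homogeneous bundle $\Sym^d T^*M$, so $\operatorname{Hom}_G(V,\sK^d)$ injects into $\operatorname{Hom}_H(V,\Sym^d\mathfrak m^*)$. To sharpen this, we use the standard prolongation of the Killing equation: the value of $K$ at $o$ together with $\nabla K(o)$ determines $K$ after prolongation. The prolonged data lie in the kernel of the symmetrisation, i.e. in $\Sym^d\mathfrak m^*\otimes\mathfrak m^*$ projected to the Young-type component $(d,1)$, and so on up to $(d,d)$. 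The constraint from curvature cuts this down. For the round sphere this reproduces the classical count
\[
\sK^d(S^n)\cong \bigoplus_{j} V_{(d-2j,d-2j)}\quad\text{(as $\SO(n+1)$-modules, up to the trace parts)}.
\]
For $\bc P^n$ we intend to identify $\sK^d$ with the space of $G$-invariant solutions realised as polynomials on the cotangent bundle of $\bc P^n$ restricted to the unit sphere bundle. The geodesic flow of $\bc P^n$ is periodic and the space of geodesics is a coadjoint orbit $\mathcal O\subset\su(n+1)^*$, namely the orbit of $\diag(1,-1,0,\dots,0)$ up to scale. A Killing tensor is a function on $T^*M$ constant along the flow, homogeneous of degree $d$ and polynomial in the fibres; hence it descends to a function on the cone over $\mathcal O$.

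\textbf{Step 2: polynomiality and surjectivity.} We must show that any such function, being polynomial of degree $d$ in momenta, is the restriction to the cone $\overline{\mathbb R_+\mathcal O}\subset\su(n+1)^*$ of a homogeneous polynomial of degree $d$ on $\su(n+1)^*$. Since the moment map $T^*M\to\su(n+1)^*$ pulls back linear functions to Killing vectors, such a restriction is exactly a decomposable Killing tensor. The strategy is as follows.
\begin{enumerate}[label=(\alph*)]
\item Decompose $\mathbb C[\overline{\mathbb R\mathcal O}]_d$ into irreducibles. The closure of the cone is the variety of rank-$\le2$ traceless Hermitian matrices with eigenvalues $(t,-t,0,\dots)$. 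Its coordinate ring in degree $d$ is known to be $\bigoplus_{j} V_{(d-2j)\omega_1+(d-2j)\omega_n}\oplus\cdots$, computable via the spherical harmonics on $\mathcal O$, which is a symmetric-space fibration.
\item Compute $\sK^d$ as an $H$-module via Frobenius reciprocity and the jet bound from Step~1.
\item Match multiplicities.
\end{enumerate}
Since decomposables form a submodule of $\sK^d$, equality of multiplicities for every highest weight gives $\sK^d=$ decomposables.

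\textbf{Main obstacle.} The hard part is Step~1(b)/(c): obtaining a sharp upper bound on the multiplicity of each $G$-irreducible in $\sK^d$. Frobenius reciprocity applied to $\Sym^d\mathfrak m^*$ alone gives too much. Our first attempt will be to exploit that each Killing tensor $K$ restricts along every geodesic through $o$ to a constant. This expresses $K$ as a function on the space of geodesics, i.e. on $\mathcal O$, whose pullback to the unit tangent sphere $S(\mathfrak m)$ at $o$ is a polynomial of degree $\le d$. An $H$-highest-weight analysis on $S(\mathfrak m)=\mathrm{U}(n)/\mathrm{U}(n-1)$ then bounds which $G$-spherical functions on $\mathcal O$ can restrict to polynomials of degree $\le d$. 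Concretely, a $G$-type $V_\lambda\subset L^2(\mathcal O)$ qualifies only if its $H$-invariant vector, evaluated along the fibre $S(\mathfrak m)$, has degree $\le d$. Reading off that degree in terms of $\lambda$ gives precisely the list in (a). This is where we expect the real computation, and the same scheme with $\mathcal O$ replaced by the Grassmannian of quaternionic lines will yield the extra $\sQ$-type contributions in the $\bh P^n$ case.
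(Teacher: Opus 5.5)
Your route is genuinely different from the paper's: you pass to the space of geodesics as a coadjoint orbit and then count multiplicities. Its first half is sound in outline. On the unit cosphere bundle the fibres of the moment map $\mu$ are exactly the geodesics. So a rank-$d$ Killing tensor is the same as a homogeneous degree-$d$ function $f$ on the cone over $\mathcal O=\SU(n+1)/\mathrm{S}(\mathrm{U}(1)\times\mathrm{U}(1)\times\mathrm{U}(n-1))$ such that every $G$-translate of $f$ pulls back to a polynomial on $T^*_o\bc P^n$. Such a tensor is decomposable iff $f$ is the restriction of a homogeneous degree-$d$ polynomial on $\mathfrak{su}(n+1)^*$.

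\textbf{The gap.} The whole content of the theorem is the implication ``fibre degree $\le d$ $\Rightarrow$ degree $\le d$ on $\mathfrak{su}(n+1)^*$''. This is exactly what you leave as ``the main obstacle''. The decomposition in (a) trails off into ``$\oplus\cdots$'', the multiplicities of $\sK^d$ in (b) are never computed, and nothing is matched in (c). This step cannot be a formality. Every $G$-finite function on the compact orbit $\mathcal O$ is the restriction of \emph{some} polynomial on $\g^*$, so only the degree is at stake. For $\bh P^n$, $n\ge3$, the nonzero elements of $\sQ$ give functions on the corresponding $\Sp(n+1)$-orbit of geodesics that are quadratic in the momenta, yet are not restrictions of quadratic polynomials on $\spg(n+1)^*$. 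So any argument along your lines must contain a computation that actually separates $\bc P^n$ from $\bh P^n$, and you have not supplied one.

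\textbf{The proposed criterion is mis-posed.} An $H$-invariant vector in a copy of $V_\lambda$ in $L^2(\mathcal O)$ is an $H$-invariant function. It is therefore constant on the $H$-orbit $\mu(S(T^*_o\bc P^n))\cong S^{2n-1}$, so its ``degree along the fibre'' is always $0$.

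The correct bookkeeping goes through Frobenius reciprocity. A copy of $V_\lambda$ in $C^\infty(\mathcal O)$ is given by an $L$-invariant functional $\phi\in(V_\lambda^*)^L$, where $L$ is the stabiliser of a geodesic $\gamma_0$ through $o$. The $H$-types occurring in its restriction to $H\cdot\gamma_0\cong S^{2n-1}$ are exactly those in which $\phi$ has a nonzero component. Membership in $\sK^d$ then requires these types to be among the $\cH^{p,q}$ with $p+q\le d$ and $p+q\equiv d \pmod 2$.

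Since $\mathcal O$ need not be spherical (for $n=2$ it is $\SU(3)/T$), $(V_\lambda^*)^L$ can have dimension greater than $1$. So for every $\lambda$ you must compute the dimension of the subspace of $(V_\lambda^*)^L$ satisfying this condition, and show it does not exceed the multiplicity of $V_\lambda$ in $\bc[\overline{C}]_d$. That branching problem is the proof, and it is absent. Also, in Step~1, a rank-$d$ Killing tensor is determined by its $d$-jet, not by $K(o)$ and $\nabla K(o)$.

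\textbf{The paper's route.} The paper avoids multiplicity bookkeeping altogether. It lifts a Killing tensor on $\bc P^n$ to an $S^1$-invariant one on $S^{2n+1}$ by solving the triangular system~\eqref{eq:deltaJ} with the commutators~\eqref{eq:brackets}. It then applies the First Fundamental Theorem for $\GL(2,\bc)$ acting on $n+1$ copies of $\bc^2$ and of its dual. This shows that the $\SL(2)\times S^1$-invariant polynomials are generated by $\cL$.
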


We will first give the proof of Theorem~\ref{t:cpn} (Section~\ref{s:CPn}): it is much easier and shows the ideas behind our approach. Then in Section~\ref{s:HPn} we establish Theorem~\ref{t:hpn}.

Notice that by~\cite[Theorem~2]{MN3}, the results similar to those in Theorem~\ref{t:hpn} and~\ref{t:cpn} hold for the noncompact duals: for the Killing tensor fields on the complex and quaternionic hyperbolic spaces.

The authors are thankful to Alexey Bolsinov, Holger Dullin and Vladimir Matveev for useful discussions.

\section{Preliminaries}
\label{s:prel}

\subsection{Killing tensor fields on the sphere}
\label{ss:sphere}

We view the unit sphere $S^m, \, m \ge 1$, as the round sphere in $\br^{m+1}$. Any covariant Killing tensor field $K$ of rank $d \ge 0$ is a homogeneous polynomial of degree $d$ in the Killing covector fields on $S^m$, relative to the symmetric tensor product by~\cite{ST1, Tak, Tho}.

We identify the tangent bundle to the space $\br^{m+1}$ with the space $\br^{2m+2} = \br^{m+1} \oplus \br^{m+1}$ of pairs of vectors $(X,P), \, X, P \in \br^{m+1}$, and introduce orthonormal bases $\{e_i\}$ and $\{f_i\},\, i=1, \dots, m+1$, for the two copies of the space $\br^{m+1}$, respectively, in such a way that the element $\sum_{i=1}^{m+1} x_i e_i + \sum_{i=1}^{m+1} p_i f_i \in \br^{2m+2}$ corresponds to the element $\sum_{i=1}^{m+1} x_i e_i + \sum_{i=1}^{m+1} p_i \partial/\partial{x_i} \in T \br^{m+1}$. Then any Killing covector field $K$ on $S^m$ is defined as follows: $K(X)(P)=\<AX, P\>$, for $X \in S^m \subset \br^{m+1}$ and $P \in T_XS^m$, where $A \in \so(n+1)$. This gives a linear isomorphism between the space of Killing covector fields on $S^m$ and the space spanned by the polynomials $V_{jk}=V_{jk}(X,P), \, j,k=1, \dots, m+1$, defined by $V_{jk}(X)(P) = x_jp_k-x_kp_j$, for $X, P \in \br^{m+1}$, where $x_j=\<X,e_j\>$ and $p_j=\<P,f_j\>$ are the coordinates of $X$ and $P$ relative to the corresponding bases, respectively. This defines a one-to-one correspondence between Killing covariant tensor fields of rank $d$ on $S^m$ and homogeneous polynomial functions of degree $d$ in the polynomial functions $V_{jk}(X,P)$ (note however, that when $m \ge 3$, the polynomials $V_{jk}(X,P)$ are not algebraically independent -- they satisfy Pl\"{u}cker identities, and so the homomorphism $v_{jk} \mapsto V_{jk}(X,P)$ from the polynomial algebra $\br[v_{jk}] = \br[\so(m+1)]$ to the algebra of polynomial functions on $\br^{2m+2}$ generated by the polynomials $V_{jk}(X,P)$ has non-trivial kernel~\cite{MMS}). For a Killing covariant tensor field $K$ on $S^m$, we define $\Phi_K=\Phi_K(X,P)$ the corresponding polynomial function on $\br^{2m+2}$. We note that for $K$ of rank $d$, the polynomial $\Phi_K(X,P)$ is bi-homogeneous of degree $d$ in the components of $X$ and of degree $d$ in the components of $P$. Moreover, any $V_{jk}$, and hence any $\Phi_K$, is invariant relative to the action of the group $\SL(2)$ on $\br^{2m+2}$ defined by
  \begin{equation}\label{eq:SL2action}
  A.(X,P) = (a_{11}X + a_{21}P, a_{12}X + a_{22}P), \text{ for } A=\begin{pmatrix} a_{11} & a_{12} \\ a_{21} & a_{22} \end{pmatrix} \in \SL(2).
  \end{equation}
The converse is also true: any polynomial on $\br^{2m+2}=\{(X,P) \, | \, X, P \in \br^{n+1}\}$ which is invariant under the $\SL(2)$ action given in~\eqref{eq:SL2action} is a polynomial in the polynomials $V_{jk}$ (by the First Fundamental Theorem of Invariant Theory for $\SL(2)$~\cite[Section~11.1.2]{P}). We can therefore say that to any Killing covariant tensor field $K$ on $S^m$ of rank $d$ there corresponds a unique polynomial $\Phi_K(X,P)$ homogeneous of degree $d$ in both $X$ and $P$ and invariant under the $\SL(2)$ action given by~\eqref{eq:SL2action}.

\subsection{Killing tensor fields under a Riemannian submersion}
\label{ss:subm}

Consider a Riemannian submersion $\pi: P \to B$ of a principal $G$-bundle $P$ to its space of orbits $B$ of a connected Lie group $G$. Any rank $d$ $G$-invariant symmetric covariant tensor $K$ on $P$ descends to a canonical invariant symmetric covariant tensor $\overline{K}$ on $B$ as follows: for $b \in B$ select any $p \in \pi^{-1}(\{b\})$ in the fibre over it and define $\overline{K}_b(Y^d) =K_p(X^d)$ where $X \in \mathcal{H}_p \subset T_p P$ is the unique horizontal lift of the tangent vector $Y  \in T_b B$ (i.e., $\pi_*(X)=Y$), and where $Y^d$ (respectively, $X^d$) denotes the diagonal $d$-tuple of the elements $Y$ (respectively, $X$).

To see this is well defined, given a second point $p^\prime \in \pi^{-1}(\{b\})$ let $\xi = v^*$ be the fundamental field of $v \in \g$ of the one parameter group that links $p$ to $p^\prime$. Extend $Y$ to a vector field and consider the unique basic vector field $X$ such that $\pi_*(X)=Y$ (the so-called horizontal lift of $Y$). Since $K$ is $G$-invariant $\mathcal{L}_\xi K = 0$. Thus $\xi K(X^d)-dK(X^{d-1},[\xi,X]) = 0$ and $[\xi,X]=0$ since $\xi$ is fundamental and $X$ is basic. Thus $\xi K(X^d) = 0$ and thereby $K(X^d)$ is constant along the orbit of the one parameter group and $K_p(X^d) = K_{p^\prime}(X^d)$.

 \begin{lemma}\label{l: descent}
 	Given a rank $d$ $G$-invariant symmetric covariant Killing tensor $K$ on a principal $G$-bundle $\pi:P \to B$, the rank $d$ symmetric covariant tensor $\overline{K}$ defined above is also Killing.
 \end{lemma}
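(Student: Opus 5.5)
We will use the second of the three equivalent definitions from the Introduction: $K$ is Killing if and only if, for every geodesic $\gamma$ of $P$ parametrised by arc length, the function $s \mapsto K(\dot\gamma(s)^d)$ is constant. The argument rests on a standard fact about Riemannian submersions, which we invoke (O'Neill): if a geodesic of $P$ is horizontal at one point, it is horizontal everywhere. Moreover, the horizontal lift of any geodesic of $B$ through a point $p$ is a horizontal geodesic of $P$, and it projects back onto that geodesic of $B$.

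Let $\sigma$ be an arc-length parametrised geodesic of $B$ with $\sigma(0)=b$. Choose $p \in \pi^{-1}(\{b\})$ and let $\gamma$ be the geodesic of $P$ with $\gamma(0)=p$ and $\dot\gamma(0)$ equal to the horizontal lift of $\dot\sigma(0)$. By the fact above, $\gamma$ is horizontal for all $s$, and $\pi\circ\gamma$ is a geodesic of $B$ with the same initial data as $\sigma$. Hence $\pi\circ\gamma=\sigma$. Consequently, for every $s$, $\dot\gamma(s)$ is the horizontal lift at $\gamma(s)$ of $\dot\sigma(s)$.

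By the definition of $\overline{K}$ (well defined by the preceding paragraph), we have $\overline{K}_{\sigma(s)}(\dot\sigma(s)^d)=K_{\gamma(s)}(\dot\gamma(s)^d)$. The right-hand side is constant in $s$ because $K$ is Killing on $P$. Therefore $s\mapsto\overline{K}(\dot\sigma(s)^d)$ is constant along every geodesic of $B$, so $\overline{K}$ is Killing. Since the geodesic criterion is stated for the symmetric tensor evaluated on diagonal arguments, and a symmetric tensor is determined by its values on the diagonal (by polarisation), no further check is needed.

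The main point needing care is the horizontality of geodesics, that is, the step showing $\dot\gamma$ stays horizontal. If we want the argument to be self-contained, we would justify it by the following computation. Let $v^*$ be a fundamental (Killing) field of $G$; since $G$ acts by isometries, $v^*$ is Killing on $P$. Then $\langle\dot\gamma,v^*\rangle$ is constant along $\gamma$ by Clairaut/Noether, and it vanishes at $s=0$. As the fundamental fields span the vertical spaces, $\dot\gamma$ remains horizontal. Finally, $\pi\circ\gamma$ is a geodesic because the submersion preserves the lengths of horizontal curves and the horizontal lift of a curve in $B$ is no longer than any other lift.
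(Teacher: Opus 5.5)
Your proof is correct, but it takes a different route from the paper. The paper works directly with the covariant-derivative form of the Killing equation. For a vector field $Y$ on $B$ with basic lift $X$, it writes $(\on_Y\overline K)(Y^d)=XK(X^d)-dK(X^{d-1},(\nabla_XX)^{\mathcal H})$. It then uses $(\nabla_XX)^{\mathcal V}=A_XX=0$, so this equals $(\nabla_XK)(X^d)=0$.

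You instead use the first-integral characterisation and push the constancy of $K(\dot\gamma^d)$ along a horizontal geodesic down to $B$. The underlying fact is the same in both proofs: ``horizontal lifts of geodesics are geodesics'' is the integrated form of the statement that, for basic $X$, $\nabla_XX=(\nabla_XX)^{\mathcal H}$ is the lift of $\on_YY$.

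The paper's version is a two-line pointwise computation. It needs neither the equivalence of the definitions of Killing tensors nor existence and uniqueness of geodesics and their horizontal lifts, and it is in the spirit of the operator calculations used later. Yours is more conceptual: Killing tensors are polynomial first integrals, and such integrals descend along horizontal geodesics. It also makes clear that nothing beyond the well-definedness of $\overline K$ is used.

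One point about your optional self-contained sketch: its last sentence does not by itself show that $\pi\circ\gamma$ is a geodesic. The horizontal lift from $\gamma(s_1)$ of a shorter competitor in $B$ generally ends at a different point of the fibre through $\gamma(s_2)$, so minimality of $\gamma$ is not contradicted.

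The same ingredients do prove the other fact you quote. For a minimising segment $c$ in $B$, its horizontal lift $\tilde c$ satisfies $L(\tilde c)=L(c)=d_B(c(0),c(1))\le d_P(\tilde c(0),\tilde c(1))$, since $\pi$ does not increase lengths. Hence $\tilde c$ is minimising and therefore a geodesic. The horizontal lift of $\sigma$ through $p$ is then a geodesic with initial velocity $\dot\gamma(0)$, so it equals $\gamma$ by uniqueness. This gives $\pi\circ\gamma=\sigma$ and the horizontality of $\gamma$ at once, so the Noether step becomes unnecessary.
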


 \begin{proof}
 	Given a vector field on $Y$ on $B$ let $X$ be its unique horizontal lift.  Then \begin{align*}
 		\left(\overline{\nabla}_Y \overline{K}\right)(Y^d) = Y\overline{K}(Y^d) - d\overline{K}(Y^{d-1},\overline{\nabla}_Y Y) & = XK(X^d)-dK(X^{d-1},(\nabla_X X)^\mathcal{H})\\
 		& = XK(X^d)-dK(X^{d-1},\nabla_X X) = 0
 \end{align*}
  since $A_X X=0$ for the O'Neill tensor $A$.
 \end{proof}

 It is natural to ask if for a principal $G$-bundle $\pi:P \to B$, any rank $d$ covariant Killing tensor $L$ on $B$ descends from a rank $d$ $G$-invariant Killing tensor $K$ on $P$ (i.e., $\overline{K}=L$)? The answer in general will be in the negative; but, with considerable effort, in the case of the principal $G$-bundles given by the Hopf fibrations $\pi: S^{2n+1} \to \bc P^n$ and $\pi: S^{4n+3} \to \bh P^n$, where $G=S^1$ and $G=\Sp(1)$ respectively, we show in the proofs of Proposition~\ref{p:liftC} and Proposition~\ref{p:liftH} respectively, that the answer is in the affirmative.

\section{Killing tensors on \texorpdfstring{$\bc P^n$}{\unichar{"2102}P\textnsuperior}. Proof of Theorem~\ref{t:cpn}}
\label{s:CPn}

Denote $J$ the complex structure on $\br^{2n+2},\; n \ge 1$, and denote $\xi = J N$ the Reeb vector fields on the unit sphere $S^{2n+1} \subset \br^{2n+2}$, where $N$ is the unit outward normal to $S^{2n+1}$. The group $S^1 = \{e^tJ \, | \, t \in \br\} \subset \SO(2n+2)$ acts on $S^{2n+1}$ by isometries, with equidistant geodesic orbits isometric to the unit circle. This defines the Hopf fibration $\pi: S^{2n+1} \to \bc P^n = S^{2n+1}/S^1$, with the projection map $\pi$ being a Riemannian submersions with geodesic fibers. The action of $S^1$ preserves both the horizontal distribution $\cH$ and the vertical distribution $\cV$. Any $S^1$-invariant (continuous) vertical vector field is a constant multiple of $\xi$. 

\begin{proof}[Proof of Theorem~\ref{t:cpn}] The claim of the theorem follows from the two propositions below.
%

  \begin{proposition} \label{p:liftC}
    For any covariant Killing tensor field $L$ on $\bc P^n$, there exists an $S^1$-invariant, covariant Killing tensor field $K$ on $S^{2n+1}$ such that $\overline{K}=L$.
  \end{proposition}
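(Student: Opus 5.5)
Given $L$ on $\bc P^n$ of rank $d$, I will construct its lift explicitly and then show the lift is Killing. The naive lift $K_0=\pi^*L$, defined by $K_0(X_1,\dots,X_d)=L(\pi_*X_1,\dots,\pi_*X_d)$, is $S^1$-invariant, satisfies $\overline{K_0}=L$, and vanishes whenever one argument is vertical. It is generally not Killing, so I will look for a correction of the form
\begin{equation*}
K=\sum_{j=0}^{d} \eta^{j}\odot K_j,
\end{equation*}
where $\eta=\xi^\flat$ is the contact form, $\odot$ is the symmetric product, and each $K_j$ is an $S^1$-invariant horizontal symmetric tensor of rank $d-j$, with $K_0=\pi^*L$ as above. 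Since the terms with $j\ge1$ vanish on horizontal vectors, $\overline{K}=L$ holds automatically. The point is to choose $K_1,\dots,K_d$ so that the Killing equation holds.

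To find the conditions, I expand $\mathrm{Sym}(\nabla K)$ using the standard O'Neill data for the Hopf fibration. The fibres are geodesic, $\nabla_X\xi=-JX$ for horizontal $X$, and $\nabla_\xi\xi=0$. For horizontal lifts, $(\nabla_X Y)^{\mathcal H}$ is the lift of $\overline{\nabla}_XY$, and $A_XY=\frac12[X,Y]^{\mathcal V}=-\<JX,Y\>\xi$. I then sort the symmetrised derivative by how many times $\eta$ appears.
\begin{itemize}
\item The purely horizontal part equals $\pi^*\mathrm{Sym}(\overline\nabla L)=0$, plus a term $\mathrm{Sym}(\<J\cdot,\cdot\>\lrcorner K_1)$. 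Because $\<J\cdot,\cdot\>$ is skew, this term vanishes after symmetrisation.
\item The part of degree $j$ in $\eta$ gives a linear recursion of the form $\mathrm{Sym}\,\overline\nabla K_j = c_j\, \mathcal{J}K_{j-1}$ (up to also involving $K_{j+1}$). Here $\mathcal{J}$ is the derivation induced by $J$, namely $(\mathcal{J}T)(Y^k)=kT(JY,Y^{k-1})$, and the $\xi$-derivative terms vanish by $S^1$-invariance.
\end{itemize}
So the whole problem reduces to a triangular system on $\bc P^n$: find symmetric tensors $L_j$ on $\bc P^n$ with $L_0=L$ such that $\mathrm{Sym}\,\overline\nabla L_{j}$ is a prescribed combination of $\mathcal{J}L_{j-1}$ and the metric.

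The key observation for solving this system is that $\mathcal{J}L$ is again a Killing tensor, because $J$ is parallel on $\bc P^n$. Its natural primitive is therefore a contraction of $L$ against the Kähler form. So I will look for $L_j$ as a universal linear combination of the iterated contractions $\mathcal{J}^aL$, traced with the Kähler form $\omega$ or the metric, with constant coefficients. This ansatz is $\mathrm{U}(n+1)$-equivariant and makes each equation algebraic rather than differential.

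The main obstacle is showing that this recursion closes: at each step the required term must be a symmetrised covariant derivative of something built from $L$. If the direct algebraic ansatz fails, I will switch to a representation-theoretic argument. Both $\sK(\bc P^n)$ and the $S^1$-invariant Killing tensors on $S^{2n+1}$ are $\SU(n+1)$-modules, and the descent map of Lemma~\ref{l: descent} is equivariant. It then suffices to show the descent map is surjective. I would do this by comparing the multiplicities of irreducible components; these are computable on both sides as spaces of harmonic-type polynomials, using the $\SL(2)$-invariant description of \S\ref{ss:sphere} restricted to $S^1$-invariants. For this step I expect to need the known dimension of $\sK(\bc P^n)$ in each rank, or an injectivity argument that bounds $\dim\sK(\bc P^n)$ from above by the dimension of the $S^1$-invariant Killing tensors on $S^{2n+1}$ modulo the kernel of descent.
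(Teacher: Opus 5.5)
\textbf{There is a genuine gap: the reduction is right, but neither of your tools for solving the resulting system works.}

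Your reduction matches the paper's Lemma~\ref{l:deltaJ}. Let $L_m$ be the coefficient of $\eta^{d-m}$ (your $K_{d-m}$). Then the Killing equation is exactly $\od L_d=0$ and $\od L_{m-1}=2m\,\orJ L_m$, with no metric terms and no coupling to $K_{j+1}$.

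First, your ``key observation'' is false: $\orJ L$ is not Killing in general. Since $\on J=0$, $\orJ$ commutes with each $\on_Z$. But $\orJ$ does not act on the slot created by the derivative, and one gets $([\od,\orJ]T)(Y^{m+1})=-(m+1)(\on_{JY}T)(Y^m)$, which is nonzero in general. Already for $d=1$, let $L=\theta$ be dual to a Killing field $V\neq0$. Then $\orJ\theta$ is dual to $-JV$, which on $\bc P^n$ is a gradient field (of the Hamiltonian of $V$) and hence not Killing.

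Second, a purely algebraic ansatz cannot produce the odd-step corrections. The central element $-\id\in\mathrm{U}(n)$ acts by $(-1)^d$ on rank-$d$ tensors and by $(-1)^{d-1}$ on rank-$(d-1)$ tensors. So every pointwise $\mathrm{U}(n)$-equivariant linear map from rank $d$ to rank $d-1$ vanishes. Separately, contracting a symmetric $L$ with the skew form $\omega$ gives zero. Your ansatz therefore forces $L_{d-1}=0$, hence $\orJ L_d=0$ by the first equation, which is false in general. For $d=1$, the required $L_0$ satisfies $dL_0=2\,\theta\circ J$. It is a multiple of the moment map of $V$, which is not a pointwise function of $\theta$.

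The missing idea is that the corrections must be \emph{first order} in $L$. The right primitive is the contraction of $\on L$, not of $L$, with the Kähler form. The paper sets $(\oi T)(Y^{m-1})=\sum_i(\on_{e_i}T)(Y^{m-1},Je_i)$ and proves three commutation relations:
\begin{itemize}
\item $[\od,\oi]=-4(n+m)\orJ$ on $\ocS^m$, via the Ricci identity and the explicit curvature tensor of $\bc P^n$ (this is where the geometry enters);
\item $[\od,\ot]=2[\oi,\orJ]$;
\item $[\ot,\orJ]=0$.
\end{itemize}
Since $\od L_d=0$, this gives $\od\,\oi L_d=-4(n+d)\orJ L_d$, so $L_{d-1}=C_d\,\oi L_d$ works. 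Inductively, $L_{m-1}=C_m(\oi L_m-(m+1)\ot L_{m+1})$ with $C_m=-m((2n+d+m)(d-m+1))^{-1}$ closes the recursion.

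Your representation-theoretic fallback does not fill the gap. You do not supply the upper bound on $\dim\sK(\bc P^n)$ that the multiplicity comparison needs. The ``known dimension'' comes from the decomposability of Killing tensors on $\bc P^n$ \cite{E, ST2}. If you may use that, the proposition is immediate, since Killing $1$-forms lift and descent is multiplicative. But then it can no longer serve as a step in an independent proof of Theorem~\ref{t:cpn}, which is its role in the paper.
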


  The proof of Proposition~\ref{p:liftC} is given in Subsection~\ref{ss:liftCPn} below. It is constructive: given a Killing tensor field $L$ 
   on $\bc P^n$, we first pull it back to 
   $\pi^*L$ on $S^{2n+1}$. The tensor field 
   $\pi^*L$ is not in general Killing on $S^{2n+1}$. We give explicit formulas for the terms 
   that have to be added to 
   $\pi^*L$ to make the resulting tensor Killing.

  Furthermore, under the one-to-one identification $K \mapsto \Phi_K$ given in Subsection~\ref{ss:sphere}, the space of Killing, $S^1$-invariant tensor fields $K$ on $S^{2n+1}$ is linearly isomorphic to the space of polynomials in $\br^{4n+4} = \{(X,P) \, | \, X, P \in \br^{2n+2}\}$, which are invariant under the action of the group $\SL(2)$ given in~\eqref{eq:SL2action} and are invariant under the action of the group $S^1$ given by $t.(X,P) = (e^{tJ}X, e^{tJ}P)$, for $t \in \br$ (and are homogeneous in $X$ and in $P$, of the same degree).

  One family of such polynomials is given by the space $\cL$ of polynomials coming from $S^1$-invariant Killing covector fields: any such covector field corresponds under $\Phi$ to the polynomial
  \begin{equation}\label{eq:cvf}
  T_A= \<AX, P\>, \quad \text{where } A \in \ug(n+1)
  \end{equation}
  (here $\ug(n+1)$ is the subalgebra of $\so(2n+2)$ which commutes with $J$).

  The proof of Theorem~\ref{t:cpn} is now completed by the following proposition:
  \begin{proposition} \label{p:polyC}
    The algebra of polynomials on $\br^{4n+4} = \{(X,P) \, | \, X, P \in \br^{2n+2}\}$ which are invariant under the simultaneous actions of the group $\SL(2)$ given in~\eqref{eq:SL2action} and the group $S^1$ given by $t.(X,P) = (e^{tJ}X, e^{tJ}P)$ is generated by the polynomials $T_A \in \cL$ given by~\eqref{eq:cvf}.
  \end{proposition}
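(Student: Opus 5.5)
The plan is to work over $\bc$ and reduce everything to classical invariant theory of $\GL(n+1,\bc)$ combined with $\SL(2)$. We complexify $\br^{2n+2}$ and split it by the eigenvalues $\pm\ri$ of $J$. We write $X = z + \bar z$ and $P = w + \bar w$, where $z,w$ lie in the $\ri$-eigenspace $W \cong \bc^{n+1}$ and $\bar z,\bar w$ in the $(-\ri)$-eigenspace $\overline W \cong W^*$. We then treat $z,w,\bar z,\bar w$ as independent complex variables.

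The $S^1$-action is the scalar action $z\mapsto e^{\ri t}z$, $\bar z\mapsto e^{-\ri t}\bar z$, and likewise on $w,\bar w$. Hence an $S^1$-invariant polynomial is one whose monomials have equal total degree in $(z,w)$ and in $(\bar z,\bar w)$.

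The $\SL(2)$-action mixes $X$ and $P$ and commutes with $J$. It therefore acts on the pair $(z,w)$ as the standard representation, and on the pair $(\bar z,\bar w)$ by the same matrices. Thus the space is $\bc^2\otimes W\ \oplus\ \bc^2\otimes \overline W$.

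First step: use the Hermitian structure to trade the $S^1$-invariance for a larger group. The key observation is the following. An $S^1\times\SL(2)$-invariant polynomial need not be $\GL(n+1)$-invariant, so we cannot simply apply $\GL(n+1)$ first fundamental theorem. Instead we regard $W$ as a vector space of dimension $n+1$ and apply the first fundamental theorem for $\SL(2)$ alone to the four vectors. Viewed as an $\SL(2)$-module, the space is $(n+1)$ copies of $\bc^2$ from $W$ and $(n+1)$ copies from $\overline W$. By the first fundamental theorem for $\SL(2)$ [P, 11.1.2], the invariants are generated by the $2\times2$ determinants of all pairs among these $2n+2$ vectors in $\bc^2$. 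These come in three types:
\begin{itemize}
\item $\a_{jk}=z_jw_k-z_kw_j$, of $S^1$-weight $2$;
\item $\bar\a_{jk}=\bar z_j\bar w_k-\bar z_k\bar w_j$, of $S^1$-weight $-2$;
\item $\b_{jk}=z_j\bar w_k-w_j\bar z_k$, of $S^1$-weight $0$.
\end{itemize}

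Second step: the $S^1$-invariant part. An $S^1$-invariant polynomial in these generators is a combination of monomials containing equally many $\a$'s and $\bar\a$'s. We therefore need to show that each product $\a_{jk}\bar\a_{lm}$ lies in the algebra generated by the $\b$'s. A direct check gives the identity
\[
\a_{jk}\bar\a_{lm}=\b_{jm}\b_{kl}-\b_{jl}\b_{km}.
\]
It expands into products $z_jw_k\bar z_l\bar w_m$ with the correct signs, and is a Plücker/Cauchy–Binet type identity. Consequently every invariant monomial is a polynomial in the $\b_{jk}$ alone.

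Third step: identify the $\b$'s with the $T_A$. Each $\b_{jk}$ is the complex bilinear pairing of $z\otimes\bar w-w\otimes\bar z$ against $e_j\otimes\bar e_k$. Its real and imaginary combinations are the real polynomials $\<AX,P\>$ with $A$ ranging over $\ug(n+1)$, the endomorphisms commuting with $J$. Concretely, the span of the $\b_{jk}$ equals the complexification of $\cL$ in $\bc[z,w,\bar z,\bar w]$. A dimension count confirms this: both spaces have dimension $(n+1)^2$.

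Finally, we restrict back to real points. The real invariant polynomials are the real parts of the complex invariants, and $\cL$ is closed under this operation. Moreover, the decomposition by $S^1$-weights on the real algebra is compatible with complexification.

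The main obstacle is the second step. One must make sure that the weight-zero subalgebra generated by the three families of determinants is really generated by the $\b$'s, via the quadratic identity above. Checking that identity and its signs carefully is the part I expect to require care. Identifying $\b_{jk}$ with $T_A$ via the $J$-eigenspace decomposition is routine bookkeeping.
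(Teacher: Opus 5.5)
Your proof is correct, but its invariant-theoretic core differs from the paper's. The paper merges $\SL(2,\bc)\times\bc^*$ into $\GL(2,\bc)$. By an explicit change of basis, it shows that the coordinate pairs from one $J$-eigenspace transform by the standard representation and those from the other by the dual representation. The First Fundamental Theorem for $\GL(2)$ with vectors and covectors then gives the contractions $(u^{(j)})^tv^{(k)}$ directly, and these are your $\b_{jk}$ up to sign.

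You instead use only the First Fundamental Theorem for $\SL(2)$, the one already invoked in Subsection~\ref{ss:sphere}, on all $2n+2$ vectors. You then decompose by torus weight and remove the weight $\pm 2$ brackets $\a_{jk}$, $\bar\a_{lm}$ through a Pl\"ucker relation; these together span the complexification of the Killing fields $\<AX,P\>$ with $A$ anticommuting with $J$. In effect you re-derive the $\GL(2)$ theorem in this case from the $\SL(2)$ one. Your route is more self-contained and shows explicitly why non-$S^1$-invariant Killing fields enter an invariant only through products that are already quadratic in $\cL$. The paper's route is shorter and needs no relations among generators.

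Two small points.
\begin{itemize}
\item Your identity has the wrong sign; it should read $\a_{jk}\bar\a_{lm}=\b_{jl}\b_{km}-\b_{jm}\b_{kl}$. For example, with $z_1=w_2=\bar z_1=\bar w_2=1$ and all other coordinates zero, the left side is $1$ while your right side is $-1$. This is harmless for the argument.
\item Your final paragraph should state plainly what is used, as the paper does. The complex-polynomial extension of a real $\SL(2)\times S^1$-invariant polynomial is $\SL(2,\bc)\times\bc^*$-invariant, by Zariski density of $\SL(2,\br)$ and, for the torus, the weight description you already gave. It is therefore a complex polynomial in the real polynomials $T_A$, and taking real parts of the coefficients gives a real polynomial in them.
\end{itemize}
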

  We prove Proposition~\ref{p:polyC} in Subsection~\ref{ss:decoCPn}.
\end{proof}

\subsection{Lifting Killing tensors from \texorpdfstring{$\bc P^n$}{\unichar{"2102}P\textnsuperior} to \texorpdfstring{$S^{2n+1}$}{S\texttwosuperior\textnsuperior\textplussuperior\textonesuperior}. Proof of Proposition~\ref{p:liftC}}
\label{ss:liftCPn}


From O'Neill formulas we know that for basic vector fields $Y$ and $Z$, $(\nabla_Y Z)^\mathcal{H}$ is also basic and the following holds:
\begin{equation}\label{eq:nablaC}
  \nabla_Y Z = (\nabla_Y Z)^\mathcal{H} -\<J Y,Z\> \xi, \qquad  \qquad \nabla_\xi \xi = 0, \qquad \nabla_Y \xi = \nabla_\xi Y = J Y,
\end{equation}
where $\nabla$ and $\on$ are the Levi-Civita connections on $S^{2n+1}$ and $\bc P^n$, respectively and $\on_{\pi_* Y} \pi_* Z = \pi_*(\nabla_Y Z)$.

For $d \ge 0$, denote $\ocS^d$ the space of smooth, symmetric, covariant tensor fields of rank $d$ on $\bc P^n$, and denote $\cS^d$ the space of smooth, symmetric, $S^1$-invariant, covariant tensor fields of rank $d$ on $S^{2n+1}$.

Let $\omega$ be the $1$-form dual to the vector field $\xi$. A tensor field $K \in \cS^d$ has the decomposition
\begin{equation}\label{eq:Jinv}
  K = \pi^*L_d + \pi^*L_{d-1} \odot \omega + \pi^*L_{d-2} \odot \omega^{\odot 2} + \dots + \pi^*L_1 \odot \omega^{\odot (d-1)} + \pi^*L_0 \omega^{\odot d},
\end{equation}
where the tensor fields $L_m \in \ocS^m,\, m = 0, \dots, d$. 
Note that $\overline{K} = L_d$.

Define the following operators:
\begin{equation}\label{eq:def4opC}
\begin{alignedat}{3}
  \od: \, & \ocS^m \to \ocS^{m+1} & &\qquad\qquad& (\od T)(Y^{m+1})& = (m+1) (\on_Y T)(Y^m),\\
  \oi: \, & \ocS^m \to \ocS^{m-1} & &\qquad\qquad& (\oi T)(Y^{m-1})& = \sum\nolimits_{i=1}^{2n} (\on_{e_i} T)(Y^{m-1},J e_i), \\
  \orJ: \, & \ocS^m \to \ocS^m & &\qquad\qquad& (\orJ T)(Y^m)& = m \, T (Y^{m-1},J Y), \\
  \ot: \, & \ocS^m \to \ocS^{m-2} & &\qquad\qquad& (\ot T)(Y^{m-2})& = \sum\nolimits_{i=1}^{2n} T (Y^{m-2},e_i, e_i), \\
\end{alignedat}
\end{equation}
where $Y$ is a vector field on $\bc P^n$, $\{e_i\}$ is a (local) orthonormal frame on $\bc P^n$, and where we denote $Y^k$ the $k$-tuple consisting of $k$ copies of $Y$. We define $\ocS^m = 0$ when $m < 0$.

In terms of the tensor fields 
$L_m$, the Killing condition~\eqref{eq:defK} for $K$ is given as follows.
\begin{lemma} \label{l:deltaJ}
  A tensor field $K \in \cS^d$ given by~\eqref{eq:Jinv} is Killing on $S^{2n+1}$ if and only if on $\bc P^n$, the following equations are satisfied:
  \begin{equation}\label{eq:deltaJ}
     \od \; L_d = 0 \qquad \text{and} \qquad \od \; L_{m-1} = 2m \orJ \; L_m, \quad m = d, \dots, 1.
  \end{equation}
\end{lemma}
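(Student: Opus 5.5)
The plan is to compute the symmetrised covariant derivative of $K$ directly and evaluate it on the two kinds of vectors at a point of $S^{2n+1}$. The Killing condition~\eqref{eq:defK} says that the polynomial $Z \mapsto (\nabla_Z K)(Z^d)$ vanishes identically. We write an arbitrary tangent vector as $Z = Y + s\xi$, with $Y$ horizontal (extended to a basic field) and $s \in \br$. Since $K$ is $S^1$-invariant, $\xi$ is Killing, and $[\xi,Y]=0$ for basic $Y$, the function $(\nabla_Z K)(Z^d)$ is a polynomial in $s$. Hence $K$ is Killing if and only if every coefficient of $s^k$, $k=0,\dots,d+1$, vanishes. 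Each coefficient is a horizontal symmetric form, i.e.\ the pullback of an element of $\ocS^{d+1-k}$. The task is to identify these coefficients with the expressions $\od L_d$ and $\od L_{m-1} - 2m\orJ L_m$, up to nonzero factors.

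To compute, expand $(\nabla_Z K)(Z^d) = Z\big(K(Z^d)\big) - d\,K(Z^{d-1},\nabla_Z Z)$. Using~\eqref{eq:nablaC}, we get $\nabla_Z Z = (\nabla_Y Y)^{\cH} - \<JY,Y\>\xi + 2sJY + s^2\nabla_\xi\xi = (\nabla_YY)^{\cH} + 2sJY$. Here $\<JY,Y\>=0$, and the constant $s$ is harmless when extended. Next, $K(Z^d) = \sum_m \binom{d}{m}\,$-weighted terms of the form $L_m(Y^m)\, s^{d-m}$, coming from~\eqref{eq:Jinv} with $\omega(Z)=s$ and $\omega(Y)=0$. (Only the structure matters, so I will normalise the symmetric product so that $(\pi^*L_m\odot\omega^{d-m})(Z^d) = L_m(Y^m)s^{d-m}$.)

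We then differentiate along $Z = Y + s\xi$. The $\xi$-derivative of the basic functions $L_m(Y^m)$ vanishes by invariance. The $Y$-derivative gives $(\on_Y L_m)(Y^m) + m L_m(Y^{m-1},\on_Y Y)$. The correction term $-dK(Z^{d-1},\nabla_ZZ)$ has two pieces. The piece $(\nabla_YY)^{\cH}$ exactly cancels the $\on_YY$ terms, which is the same mechanism as in Lemma~\ref{l: descent}. The piece $2sJY$ produces $-2s\,m L_m(Y^{m-1},JY)s^{d-m}$, with the appropriate combinatorial factor. There is one further subtlety: $\omega(\nabla_Z Z)=0$ and $\omega$ has its own derivative $\nabla\omega$ (by~\eqref{eq:nablaC}, $(\nabla_Y\omega)(W) = \<JY,W\>$-type terms). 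So I would make sure the term $Z(\omega(Z)) = 0$ is consistent; since we use the function $s$ directly, this is automatic.

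Collecting the coefficient of $s^{d+1-m}$ gives
\[
\tfrac{1}{m}(\od L_{m-1})(Y^{m}) - 2\,(\orJ L_m)(Y^m)\cdot(\text{const}),
\]
and the coefficient of $s^0$ gives $\od L_d$. The highest power, $s^{d+1}$, involves only $\xi L_0 = 0$, so it vanishes automatically. The main obstacle is the bookkeeping of the combinatorial constants: the factors $m+1$ in $\od$, $m$ in $\orJ$, and the binomial weights hidden in $\odot$. These must conspire to give exactly the factor $2m$ in~\eqref{eq:deltaJ}. I would fix the convention for $\odot$ so that $\omega^{\odot k}(Z^k)=s^k$, track the degree-$m$ horizontal part carefully, and check the result in the case $d=1$, where $K = \pi^*L_1 + L_0\omega$. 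For $d=1$, the known Killing fields (e.g.\ $\xi$ itself, with $L_1=0$ and $L_0=1$, which must give $\od L_0 = 0$) calibrate the sign and the constant. Finally, since horizontal $Y$ at a point is arbitrary and a symmetric form is determined by its values on the diagonal, the vanishing of all coefficients is equivalent to~\eqref{eq:deltaJ}.
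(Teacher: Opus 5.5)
Your argument is essentially the paper's: both write the tangent vector as a basic horizontal field plus a multiple of $\xi$, kill the $\xi$-derivative of $K(Z^d)$ by $S^1$-invariance, use $\nabla_ZZ=(\nabla_YY)^{\cH}+2sJY$ from~\eqref{eq:nablaC}, and separate the identity by degree. The paper takes $s=1$, normalises $\on_YY=0$ at the point instead of cancelling those terms, and separates by homogeneity in the horizontal vector rather than in $s$.

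The constant you left open is already fixed by your own computation, so no calibration is needed. With $(\pi^*L_m\odot\omega^{\odot(d-m)})(Z^d)=L_m(Y^m)s^{d-m}$, which is the convention the paper uses, the coefficient of $s^{d+1-m}$ is exactly $(\on_YL_{m-1})(Y^{m-1})-2mL_m(Y^{m-1},JY)=\tfrac1m\,(\od L_{m-1}-2m\,\orJ L_m)(Y^m)$. Note also that the test case $K=\omega$ could not have calibrated anything, since both sides of every equation vanish there.
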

\begin{proof}
  Let $X$ be a basic vector field on $S^{2n+1}$ and denote $Y=\pi_*X$ its projection to $\bc P^n$. Given a point $q \in S^{2n+1}$, we can assume that $\on_Y Y = 0$ at the point $p = \pi(q) \in \bc P^n$. Then at the point $q$, the O'Neill formulas~\eqref{eq:nablaC} give $\nabla_X \xi = \nabla_\xi X = JX$ and $\nabla_X X = \nabla_\xi \xi = 0$. The Killing condition for $K$ means that $\nabla_{X+\xi} K((X+\xi)^d) = 0$. As both $K$ and $X+\xi$ are $S^1$-invariant, we have $\xi(K((X+\xi)^d)) = 0$ which gives $X (K((X+\xi)^d)) = 2d K((X+\xi)^{d-1},JX)$. Now from~\eqref{eq:Jinv} we have $K(X+\xi) = \sum_{m=0}^{d} \pi^*L_m(X^m)$, and so the Killing condition for $K$ is equivalent to the system of equations $(\on_Y L_d)(Y^d)=(\nabla_X \pi^*L_d)(X^d) = 0$ and $(\on_Y L_{m-1})(Y^m) = (\nabla_X \pi^*L_{m-1})(X^m) = 2m L_m(Y^{m-1},JY)$, for $m = d, \dots, 1$. 
  This system of equations is equivalent to~\eqref{eq:deltaJ}, by using the definitions of $\od$ and $\orJ$ given in~\eqref{eq:def4opC}.
\end{proof}

The first equation of~\eqref{eq:deltaJ} tells that the tensor field $L_d \in \ocS^d$ is Killing on $\bc P^n$. Hence to prove Proposition~\ref{p:liftC} we need to construct, starting with an arbitrary Killing tensor field $L_d \in \ocS^d$, tensor fields $L_{m-1} \in \ocS^{m-1}, \, m = d, \dots, 1$, such that the remaining equations of~\eqref{eq:deltaJ} are satisfied.

This is done with the help of the following commutation relations.
\begin{lemma} \label{l:brackets}
  In the notation of~\eqref{eq:def4opC}, the following holds:
  \begin{equation}\label{eq:brackets}
    [\od, \oi] = -4(n+m) \orJ, \qquad  [\od, \ot] = 2 [\oi, \orJ], \qquad [\ot, \orJ] = 0,
  \end{equation}
  where in the first equation, both sides act on the elements from $\ocS^m$.
\end{lemma}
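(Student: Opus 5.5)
The plan is to verify the three identities in \eqref{eq:brackets} pointwise, in index notation with respect to a local orthonormal frame $\{e_i\}$ that is parallel at the point under consideration. I will use two facts about $\bc P^n$. First, $J$ is parallel, so $\orJ$ commutes with $\on$. Second, the curvature tensor of $\bc P^n$ is that of the base of the Hopf fibration from the unit sphere, which has holomorphic sectional curvature $4$:
\[
R(X,Y)Z=\<Y,Z\>X-\<X,Z\>Y+\<JY,Z\>JX-\<JX,Z\>JY+2\<X,JY\>JZ .
\]
It is convenient to note that, up to the factor $m$ on $\ocS^m$, $\orJ$ is the derivation of the symmetric algebra induced by the skew endomorphism $J$. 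I also introduce the divergence $\partial T(Y^{m-1})=\sum_i(\on_{e_i}T)(e_i,Y^{m-1})$, and I expect both sides of the second identity to reduce to constant multiples of $\partial$.

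\emph{Third identity.} The relation $[\ot,\orJ]=0$ is purely algebraic. Since $J$ is orthogonal, the trace $\sum_i e_i\otimes e_i$ is $J$-invariant, so the derivation induced by $J$ commutes with contraction against the metric. Concretely, the extra terms that appear when $\orJ$ hits the two contracted slots are
\[
\sum_i \bigl(T(\dots,Je_i,e_i)+T(\dots,e_i,Je_i)\bigr),
\]
and this vanishes because $T$ is symmetric while $J$ is skew.

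\emph{Second identity.} Both $[\od,\ot]$ and $[\oi,\orJ]$ are first order operators with no curvature contribution.
\begin{itemize}[leftmargin=2em]
\item For $[\od,\ot]$: expanding $\ot\od T$ gives $\od\ot T$ plus the terms in which the differentiated slot lands in a traced pair. These equal a constant multiple of $\partial T$ (I expect $2\partial T$ up to sign).
\item For $[\oi,\orJ]$: since $\on J=0$, the commutator reduces to the terms in which $\orJ$ acts on the slot carrying $Je_i$. These give $\sum_i(\on_{e_i}T)(\dots,J^2e_i)=-\partial T$, again up to a combinatorial constant.
\end{itemize}
Comparing the two constants, with the normalisations fixed in \eqref{eq:def4opC}, should yield $[\od,\ot]=2[\oi,\orJ]$. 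The only work here is careful bookkeeping of the factors $m$ and $m+1$.

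\emph{First identity.} This is the main step. Writing out $\oi\od T-\od\oi T$ on $\ocS^m$, the first order parts are not the issue; the second derivative terms do not cancel identically. What remains is
\[
\sum_i(\on^2_{e_i,Y}T-\on^2_{Y,e_i}T)(Y^{m-1},Je_i),
\]
together with terms of the form $\sum_i(\on^2_{e_i,e_i}T)(\dots)$ paired through $J$, which should cancel by the symmetry of $T$ and the skewness of $J$. The Ricci identity turns the surviving second derivative difference into curvature acting on $T$, namely $-\sum_k T(\dots,R(e_i,Y)(\cdot)_k,\dots)$. Substituting the formula for $R$ above, contracting with $Je_i$, and summing over $i$ should produce only multiples of $T(Y^{m-1},JY)$: the term $2\<X,JY\>JZ$ and the terms $\<JY,Z\>JX$ contribute the dimension-dependent factor involving $n$, and the remaining slots contribute a factor involving $m$. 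Collecting everything should give $-4(n+m)\orJ T$.

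I expect the main obstacle to be the sign and normalisation bookkeeping in this curvature contraction: keeping track of the factors $m+1$ from $\od$ and $m$ from $\orJ$, and of the dimension $2n$ appearing in $\sum_i\<Je_i,Je_i\>$. As a check of the constants, I would test the identity on $m=0$. There $\oi$ vanishes on functions, and $[\od,\oi]f=-\oi\od f=-\sum_i(\on^2_{e_i,Je_i}f)=0$ by the symmetry of the Hessian, which is consistent with $\orJ f=0$. I would then also test it on $m=1$, taking $T$ to be the dual of a Killing field, where the right-hand side is $-4(n+1)\orJ T$.
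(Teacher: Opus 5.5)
The third identity and the structure of your argument for the second match the paper. The constants there come out as $[\od,\ot]T=-2\partial T$ and $[\oi,\orJ]T=-\partial T$, so no sign ambiguity remains.

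The genuine gap is in the first identity, at the step you dismiss. At a point where $\on Y=\on e_i=0$ one gets
\[
([\od,\oi]T)(Y^m)=m\sum_i\bigl((\on^2_{Y,e_i}-\on^2_{e_i,Y})T\bigr)(Y^{m-1},Je_i)-\sum_i(\on^2_{e_i,Je_i}T)(Y^m).
\]
You assert that the last sum cancels by the symmetry of $T$ and the skewness of $J$. It does not:
\begin{itemize}
\item Here $J$ pairs the two \emph{derivative} directions, not a slot of $T$, so the symmetry of $T$ plays no role.
\item Skewness of $\sum_i e_i\otimes Je_i$ kills only the symmetric part of the Hessian. It leaves exactly the antisymmetric part, $\sum_i\on^2_{e_i,Je_i}T=\tfrac12\sum_i\overline{R}(e_i,Je_i)T$, which is nonzero on tensors of rank $m\ge 1$.
\item With the curvature tensor you wrote down, $\sum_i\overline{R}(e_i,Je_i)Y=-4(n+1)JY$. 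So the discarded term equals $2m(n+1)\,T(Y^{m-1},JY)$, a Ricci-type contribution.
\end{itemize}

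This term is indispensable for the constant. The Ricci-identity term you keep evaluates to
\[
m\sum_i(\overline{R}(Y,e_i)T)(Y^{m-1},Je_i)=2m(1-2m-n)\,T(Y^{m-1},JY).
\]
On its own this would give $[\od,\oi]=-2(n+2m-1)\orJ$, which is false. Only after subtracting $2m(n+1)\,T(Y^{m-1},JY)$ do you get $-4m(n+m)\,T(Y^{m-1},JY)=-4(n+m)(\orJ T)(Y^m)$. That is how the paper's proof concludes: it keeps both curvature terms, $m\sum_i(\overline{R}(Y,e_i)T)(Y^{m-1},Je_i)$ and $-\tfrac12\sum_i(\overline{R}(e_i,Je_i)T)(Y^m)$.

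In particular, you expect the term $2\langle X,JY\rangle JZ$ to supply part of the $n$-dependence. That happens only through the term you threw away; in the term you keep, only $\langle JY,Z\rangle JX$ produces an $n$.

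Your $m=0$ check cannot detect this, because on functions the Hessian really is symmetric and the missing contribution carries a factor $m$. The $m=1$ test you propose would have exposed it.
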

\begin{proof}
  The third equation of~\eqref{eq:brackets} follows from the definitions of $\od$ and $\orJ$ and the fact that $\sum_{i=1}^{2n} T(Y^k,e_i,Je_i) = 0$, for any $T \in \ocS^{k+2}$, as $T$ is symmetric and $J$ is skew-symmetric.

  To simplify calculations in the first two equations, we can assume that a vector field $Y$ and an orthonormal frame $\{e_i\}$ are chosen in such a way that $\on Y = \on e_i = 0$ at a point $p \in \bc P^n$. For the second equation of~\eqref{eq:brackets}, we compute $([\oi, \orJ] T)(Y^{m-1}) = -\sum_{i=1}^{2n} (\on_{e_i} T)(Y^{m-1},e_i)$ and $([\od, \ot] T)(Y^{m-1}) = 2\sum_{i=1}^{2n} (\on_{e_i} T)(Y^{m-1},e_i)$, for $T \in \ocS^m$, and the claim follows.

  The proof of the first equation is a little bit more involved. For $T \in \ocS^m$, we obtain $\od \, \oi \, T, \, \oi \, \od \, T \in \ocS^m$. At the point $p$ where $\on Y = \on e_i = 0$ (and as $\on J = 0$) we find
  \begin{align*}
    (\od(\oi T))(Y^m) & = m (\on_Y (\oi T))(Y^{m-1}) = m Y \big((\oi T)(Y^{m-1})\big) \\
    &= m \sum\nolimits_{i=1}^{2n} Y \big((\on_{e_i} T)(Y^{m-1},J e_i)\big) = m \sum\nolimits_{i=1}^{2n} (\on^2_{Y,e_i} T) (Y^{m-1},J e_i).
  \end{align*}
  and
  \begin{align*}
    (\oi(\od T))(Y^m) & = \sum\nolimits_{i=1}^{2n} (\on_{e_i} \od T) (Y^{m},J e_i) = \sum\nolimits_{i=1}^{2n} e_i \big((\od T) (Y^{m},J e_i) \big) \\
     & = \sum\nolimits_{i=1}^{2n} e_i \big(m(\on_Y T) (Y^{m-1},J e_i) + (\on_{J e_i} T) (Y^m)\big) \\
     & = \sum\nolimits_{i=1}^{2n} \big(m(\on^2_{e_i,Y} T) (Y^{m-1},J e_i) + (\on^2_{e_i,J e_i} T) (Y^m)\big),
  \end{align*}
  from which
  \begin{equation*}
    ([\od,\oi] T)(Y^m) = \sum\nolimits_{i=1}^{2n} \big(m(\overline{R}(Y,e_i) T) (Y^{m-1},J e_i) - \tfrac12 (\overline{R}(e_i,J e_i) T) (Y^m)\big),
  \end{equation*}
  where in the second term in the summation we used the fact that $\sum\nolimits_{i=1}^{2n} (\on^2_{e_i,J e_i} T) (Y^m) = - \sum\nolimits_{i=1}^{2n} (\on^2_{J e_i,e_i} T) (Y^m)$, and where $\overline{R}$ is the curvature tensor of $\bc P^n$, which is given by $\overline{R}(X,Z)W = \<Z, W\> X - \<X, W\> Z - 2 \<JX, Z\> JW + \<JW, X\> JZ - \<JZ, W\> JX$. Then $\sum\nolimits_{i=1}^{2n} (\on^2_{e_i,Y} T) (Y^{m-1},J e_i) = 2(1-2m-n) T (Y^{m-1},JY)$ and $\tfrac12\sum\nolimits_{i=1}^{2n} (\on^2_{e_i,J e_i} T) (Y^m) = 2m(n+1) T (Y^{m-1},JY)$, and the claim follows.
\end{proof}

Now we claim that for a given Killing tensor field $L_d \in \ocS^d$, the tensor fields $L_{d-1}, L_{d-2},$ $\dots, L_1, L_0$ consecutively defined by
\begin{equation}\label{eq:defLm}
\begin{gathered}
  L_{d-1} = C_d \, \oi \, L_d, \qquad L_{m-1} = C_m (\oi \, L_{m} - (m+1) \ot \, L_{m+1}), \quad m = d-1, d-2, \dots, 1, \\
  \text{where} \; C_m = -m((2n+d+m)(d-m+1))^{-1},
\end{gathered}
\end{equation}
satisfy equations~\eqref{eq:deltaJ}.

Indeed, according to~\eqref{eq:defLm} we have $\od \, L_{d-1} = C_d \, \od \, \oi L_d = C_d \, ([\od, \oi] + \oi \, \od) L_d = -4 C_d (n+d) \orJ \, L_d = 2d \, \orJ \, L_d$, as required by~\eqref{eq:deltaJ}, where in the second last equation we used the formula for $[\od, \oi]$ given in~\eqref{eq:brackets} and the fact that $\od \, L_d = 0$, and in the last one, the expression for $C_d$ given in~\eqref{eq:defLm}.

Furthermore, having already constructed the tensor fields $L_{d-1}, \dots, L_{m}$ for some $m \le d-1$, we obtain from~\eqref{eq:defLm}:
\begin{equation*} 
\begin{split}
    \od \, L_{m-1} & = C_m (([\od, \oi] + \oi \, \od) L_{m} - (m+1) ([\od, \ot] + \ot \, \od) L_{m+1}) \\
    &
    {\begin{aligned}
    = C_m (-4(n+m) \orJ \, L_{m} & + 2(m+1) \oi \, \orJ \, L_{m+1} \\
    & - (m+1) (2 [\oi, \orJ] L_{m+1} + 2(m+2) \ot \, \orJ \, L_{m+2}))
    \end{aligned}} \\
    & = C_m \orJ \, (-4(n+m) L_{m} + 2(m+1) (\oi \, L_{m+1} - (m+2) \ot \, L_{m+2})) \\
    & = C_m \orJ \, (-4(n+m) L_{m} + 2(m+1) C_{m+1}^{-1} L_{m}) = 2m \orJ \, L_{m},
\end{split}
\end{equation*}
as required, where in the second equation we used the formulas for $[\od, \oi]$ and $[\od, \ot]$ from~\eqref{eq:brackets} and the expressions for $\od \, L_m$ and $\od \, L_{m+1}$ from~\eqref{eq:deltaJ},
in the third equation, the fact that $[\ot, \orJ] = 0$ from~\eqref{eq:brackets}, in the fourth, the expression for $L_{m}$ from~\eqref{eq:defLm}, and in the fifth, the formulas for $C_m$ and $C_{m+1}$ given in~\eqref{eq:defLm}.

Thus given a covariant Killing tensor field $L_d$ on $\bc P^n$, we construct the tensor fields $L_{d-1}, L_{d-2}, \dots, L_1, L_0$ on $\bc P^n$ by~\eqref{eq:defLm}, 
pull them back to tensor fields $\pi^*L_m$ on $S^{2n+1}$ and then compose a symmetric, covariant tensor field $K$ on $S^{2n+1}$ by~\eqref{eq:Jinv}. The tensor field $K$ is Killing and $S^1$-invariant, and $\overline{K}=L_d$, as required. This completes the proof of Proposition~\ref{p:liftC}.

\subsection{Proof of Proposition~\ref{p:polyC}}
\label{ss:decoCPn}

In the space $\br^{4n+4} = \br^{2n+2} \oplus \br^{2n+2} = \{(X, P) \, | \, X, P \in \br^{2n+2}\}$, we introduce orthonormal bases $\{e_i\}$ and $\{f_i\},\, i=1, \dots, 2n+2$, for the two copies of the space $\br^{2n+2}$ as in Subsection~\ref{ss:sphere}, in such a way that $Je_{2j-1}=e_{2j}, \, Je_{2j}=-e_{2j-1}$, $Jf_{2j-1}=f_{2j}, \, Jf_{2j}=-f_{2j-1}$, for $j = 1, \dots, n+1$. Relative to these bases, the space $\cL$ of polynomials $T_A$ given by~\eqref{eq:cvf} is spanned by the polynomials
\begin{equation}\label{eq:KvfC}
\begin{split}
   W_{jk,0} & = x_{2j-1}p_{2k-1}-x_{2k-1}p_{2j-1}+x_{2j}p_{2k}-x_{2k}p_{2j} \quad \text{and}\\
   W_{jk,1} & = x_{2j-1}p_{2k}-x_{2k}p_{2j-1}-x_{2j}p_{2k-1}+x_{2k-1}p_{2j},
\end{split}
\end{equation}
for $1 \le j, k \le n+1$.

We complexify the space $\br^{4n+4}$ and consider the action of the group $\SL(2,\bc) \times \bc^*$ on the space $\bc^{4n+4}$ defined by the same formula~\eqref{eq:SL2action}, but with complex coefficients, and by $z.(X,P) = (e^{zJ}X, e^{zJ}P)$, where $z \in \bc$ (or by $z.(X,P) = (e^{(\ln z) J}X, e^{(\ln z) J}P)$ for $z \in \bc^*$), respectively. Note that these two actions commute and that $(\SL(2,\bc) \times \bc^*)/\mathbb{Z}_2 \cong \GL(2,\bc)$, where $\mathbb{Z}_2 = \{(I_2,1),(-I_2,-1)\}$ is the ineffective kernel of the action.

Introduce a new basis $\{E_i,F_i\},\, i = 1, \dots, 2n+2$, for $\bc^{4n+4}$ by defining $E_{2j-1}=e_{2j-1} + \ri e_{2j}, \, E_{2j}=e_{2j-1} - \ri e_{2j}, \, F_{2j-1}=f_{2j-1} + \ri f_{2j}, \, F_{2j}=f_{2j-1} - \ri f_{2j}$, for $j=1, \dots, n+1$. The corresponding coordinate changes are given by $z_{2j-1}=\frac12(x_{2j-1} - \ri x_{2j}), \, z_{2j}=\frac12(x_{2j-1} + \ri x_{2j}), \, w_{2j-1}=\frac12(p_{2j-1} - \ri p_{2j}), \, w_{2j}=\frac12(p_{2j-1} + \ri p_{2j})$, where $\sum_{k=1}^{2n+2} (x_ke_k+p_kf_k) = \sum_{k=1}^{2n+2} (z_kE_k+w_kF_k)$. Then the subspaces $\Span_\bc(E_{2j-1}, F_{2j-1})$ and $\Span_\bc(F_{2j}, -E_{2j})$ are $\GL(2,\bc)$-invariant. Moreover, the action of an element $M \in \GL(2,\bc)$ on each of these subspaces, relative to these particular bases, is given as follows:
\begin{equation*}
  M.u^{(j)} = M u^{(j)}, \quad M.v^{(j)} = (M^t)^{-1} v^{(j)}, \quad \text{where } u^{(j)}=\begin{pmatrix} z_{2j-1} \\ w_{2j-1} \end{pmatrix}, \quad v^{(j)}=\begin{pmatrix} -w_{2j} \\ z_{2j} \end{pmatrix},
\end{equation*}
and so $\GL(2, \bc)$ acts by its standard representation on each of the $n+1$ vectors $u^{(j)}$ and by the dual representation on each of the $n+1$ vectors $v^{(j)}$. By the First Fundamental Theorem of Invariant Theory~\cite[Theorem~9.1.4]{P}, the algebra of invariant polynomials of this action is generated by the polynomials $(u^{(j)})^t v^{(k)}, \; j,k=1, \dots, n+1$. From~\eqref{eq:KvfC} we obtain
\begin{equation*}
(u^{(j)})^t v^{(k)} = w_{2j-1} z_{2k}-z_{2j-1}w_{2k} =  - \tfrac14 (W_{jk,0} + \ri W_{jk,1}).
\end{equation*}
Note that $W_{kj,0}=-W_{jk,0}$ and $W_{kj,1}=W_{jk,1}$, and so the algebra of complex polynomial on $\bc^{4n+4}$ invariant under our action of the group $\GL(2,\bc)$ is generated by the polynomials $W_{jk,0}$ and $W_{jk,1}$ from the complexification of the space $\cL$. Now a real polynomial invariant under the action of the group $\SL(2) \times S^1$ will be also invariant under the action of the group $\GL(2,\bc)$ and so will be a complex polynomial in $W_{jk,0}$ and $W_{jk,1}$, and hence a real polynomial in $W_{jk,0}$ and $W_{jk,1}$, as the polynomials $W_{jk,0}$ and $W_{jk,1}$ are real.

This completes the proof of Proposition~\ref{p:polyC}.

\section{Killing tensors on \texorpdfstring{$\bh P^n$}{\unichar{"210D}P\textnsuperior}. Proof of Theorem~\ref{t:hpn}}
\label{s:HPn}

Denote $J_1,J_2,J_3$ complex structures on $\br^{4n+4},\; n \ge 1$, such that $J_1J_2=J_3$, and denote $\xi_\alpha = J_\alpha N,\, \alpha=1,2,3$, the corresponding orthonormal vector fields on the unit sphere $S^{4n+3} \subset \br^{4n+4}$, where $N$ is the unit outward normal to $S^{4n+3}$. The group $\Sp(1) \subset \SO(4n+4)$ with the Lie algebra $\spg(1) = \Span(J_1,J_2,J_3)$ acts on $S^{4n+3}$ by isometries, with totally geodesic orbits isometric to the unit sphere $S^3$. This defines the Hopf fibration $\pi: S^{4n+3} \to \bh P^n$, with the projection map $\pi$ being a Riemannian submersions with totally geodesic fibers. We denote $\cH$ and $\cV$ the horizontal and the vertical distributions, respectively. Note that the distribution $\cH$ is not integrable. The action of $\Sp(1)$ preserves the distributions $\cH$ and $\cV$. An $\Sp(1)$-invariant horizontal vector field is called \emph{basic}. Similarly, a horizontal tensor field on the sphere $S^{4n+3}$ (a section of the tensor bundle over $\cH$) is called \emph{basic} if it is $\Sp(1)$-invariant.

At every point of the sphere $S^{4n+3}$, the vector fields $\xi_1, \xi_2, \xi_3$ form an orthonormal vertical basis. Note that these vector fields are Killing, but not $\Sp(1)$-invariant. For a vertical vector field $\xi$ and a horizontal vector field $Y$ we denote $J_\xi Y = \sum_{\alpha=1}^{3} \<\xi,\xi_\alpha\> J_\alpha Y$.

\begin{proof}[Proof of Theorem~\ref{t:hpn}] The claim of the theorem follows from the two propositions below.
%

  \begin{proposition} \label{p:liftH}
    For any Killing tensor field $L$ on $\bh P^n$, there exists a Killing, $\Sp(1)$-invariant tensor field $K$ on $S^{4n+3}$ such that $\overline{K}=L$.
  \end{proposition}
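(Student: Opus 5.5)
We mirror the proof of Proposition~\ref{p:liftC}, replacing the single vertical direction $\xi$ by the three-dimensional vertical distribution $\cV$. First we decompose an arbitrary $\Sp(1)$-invariant symmetric covariant tensor field $K$ on $S^{4n+3}$ of rank $d$ by vertical degree: $K=\sum_{m=0}^d K_m$, where $K_m$ has $m$ horizontal and $d-m$ vertical slots. $\Sp(1)$-invariance means that $K_m$ is a basic horizontal tensor field with values in $\Sym^{d-m}(\cV^*)$, and $\Sp(1)$ acts on $\cV$ by the adjoint representation. Pointwise such an object is an $\Sp(1)$-equivariant map from the vertical symmetric powers to horizontal tensors. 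The natural description is $K_m(X^m,\xi^{d-m})=L_m(X^m;\xi)$, where $L_m$ is a section over $\bh P^n$ of the bundle $\Sym^m(T^*)\otimes \Sym^{d-m}(\cQ)$, with $\cQ\subset\so(T\bh P^n)$ the rank-three quaternionic bundle spanned locally by $J_1,J_2,J_3$. The identification sends $\xi$ to $J_\xi$, using $J_{\xi}$ as defined above. Then $\overline K=L_d$.

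Next we derive the analogue of Lemma~\ref{l:deltaJ}. We take a basic $X$ over $Y$ with $\on_YY=0$ at a point and a vertical field $\xi$ that is $\Sp(1)$-invariant along the relevant directions. The O'Neill formulas for the quaternionic Hopf fibration give $\nabla_XY=(\nabla_XY)^\cH-\sum_\a\<J_\a X,Y\>\xi_\a$, $\nabla_\xi X=J_\xi X$ (up to the normalisation used in \eqref{eq:nablaC}), and totally geodesic fibres. Expanding $(\nabla_{X+\xi}K)((X+\xi)^{d+1})=0$ by vertical degree, we expect the following system on $\bh P^n$:
\begin{equation*}
\od\,L_d=0,\qquad \od\,L_{m-1}=c_m\,\orJ\,L_m+(\text{fibre term})\,L_{m-1},\quad m=d,\dots,1 .
\end{equation*}
Here $\od$ is symmetrised covariant differentiation in the horizontal slots, using the induced connection on $\cQ$. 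The operator $\orJ$ contracts one $\cQ$-slot against a horizontal slot, $(\orJ T)(Y^m;\xi)\sim m\,T(Y^{m-1},J_\xi Y;\xi^{\cdot})$. The fibre term comes from the Killing equation along $S^3$; it involves $\so(3)$-derivatives of $\cQ$-valued polynomials, i.e.\ the Killing condition for $\xi\mapsto L_{m-1}(\cdot;\xi)$ on the round $S^3$. The vertical-only equation forces each component to be harmonic-type in the $\cQ$-variables, and the equations are best organised by splitting $\Sym^{k}(\cQ)$ into traceless parts $\cH^{k-2j}$ times powers of $|\xi|^2$.

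We then introduce the divergence-type operators $\oi$ (contract a derivative index with a $J_\a$-rotated slot, adding a $\cQ$-slot), $\ot$ (metric trace), and a $\cQ$-trace. We compute their commutators with $\od$ and $\orJ$ using the curvature of $\bh P^n$,
\begin{equation*}
\overline R(X,Z)W=\<Z,W\>X-\<X,W\>Z+\sum\nolimits_\a\big(2\<J_\a Z,X\>J_\a W+\<J_\a W,X\>J_\a Z-\<J_\a W,Z\>J_\a X\big),
\end{equation*}
and obtain an analogue of Lemma~\ref{l:brackets}, e.g.\ $[\od,\oi]=c(n,m)\,\orJ+(\text{lower order terms})$ acting on each isotypic piece. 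With these relations we define $L_{d-1},L_{d-2},\dots,L_0$ recursively, as in \eqref{eq:defLm}, as explicit linear combinations of $\oi L_m$, $\ot L_{m+1}$ and the $\cQ$-traces. The constants are chosen so that applying $\od$ and using the brackets together with $\od L_d=0$ reproduces the required right-hand side. Finally, by Lemma~\ref{l: descent} and the construction, $K$ is Killing, $\Sp(1)$-invariant, and satisfies $\overline K=L$.

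The main obstacle is the noncommutative structure of $\cQ$. The operators $\orJ$ for different $\a$ do not commute, since $J_1J_2=J_3$, and the commutators produce terms that shift the $\Sp(1)$-type of the $\cQ$-valued part. As a result, the recursion is not scalar but runs over $\Sp(1)$-isotypic components, and the needed constants must be nonzero for every $n$ and $m$. I would first attempt the recursion restricted to the component of top $\cQ$-weight, i.e.\ use $\cQ$-polynomials of the form $\<\xi,\cdot\>^{k}$ evaluated via $J_\xi$, and check that the other components can be set to zero consistently. If this fails, I would allow the traces $|\xi|^{2j}$ and solve the resulting triangular linear system, verifying invertibility of its diagonal coefficients, which presumably depend on $n+m$ as in $C_m$.
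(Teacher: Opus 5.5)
\textbf{There is a genuine gap.} Your outline is the paper's strategy: split $K$ by vertical degree, turn the Killing equation into a triangular system, and solve it recursively with divergence-type operators and commutator identities. Working on $\bh P^n$ with $\Sym^{d-m}(\cQ)$-valued tensors instead of on $S^{4n+3}$ is only different bookkeeping. But the proposal stops where the proof begins. No commutator is computed, and ``the constants are chosen so that\dots'' is asserted, not shown.

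For $\bh P^n$ this is the whole difficulty, because the analogue of Lemma~\ref{l:brackets} is \emph{not} of the form $[\od,\oi]=c\,\orJ$. The paper finds
\[
[\delta,\iota]T=\bigl(c(a,b)\rJ-4[\nu,\rJ]\bigr)T,\qquad c(a,b)=-4(2n+b-a-1),
\]
where $\nu$ is the vertical trace operator of~\eqref{eq:def4opH}. This comes together with $2[\iota,\rJ]=\|\xi\|^2[\delta,\tau]$, $[\tau,\rJ]=0$ and $[\delta,\nu]=0$. Since $[\nu,\rJ]$ is not a multiple of $\rJ$, a scalar recursion modelled on~\eqref{eq:defLm} does not close.

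The missing idea is to make the coefficient of $L_{d-m}$ an operator that commutes with $\delta$. Concretely, define $L_{d-m}$ by
\[
-\tfrac{2}{m}\bigl(\nu+m(2n+d-m)\bigr)L_{d-m}=\iota L_{d-m+1}-(m-1)\|\xi\|^2\tau L_{d-m+2}.
\]
Then prove that $\nu+a(2n+b)\,\id$ is invertible on $\ocS^{a,b}$. It acts diagonally on the pieces $\|\eta\|^{2k}\cH^{a-2k}$ of the vertical polynomials, and $\nu$ has nonnegative eigenvalues there. Your fallback (a triangular system over trace components with nonzero diagonal) points this way. Until the ``lower order terms'' are identified, though, nothing shows that $\delta$ of your combination of $\iota L$, $\tau L$ and vertical traces gives the required right-hand side.

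Your expected system is also wrong in one respect: there is no fibre term. For $\Sp(1)$-invariant $K$, basic $Y$ and $\Sp(1)$-invariant vertical $\xi$, one has $\xi\bigl(K((Y+\xi)^d)\bigr)=0$ and $\nabla_\xi\xi=0$. Hence the Killing condition is exactly $\delta L_{d-m}=2m\,\rJ L_{d-m+1}$, $m=0,\dots,d$ (Lemma~\ref{l:KillingH}), and its purely vertical component is vacuous. So nothing forces harmonicity in the $\cQ$-variables. For example, the round metric of $S^{4n+3}$ is an $\Sp(1)$-invariant Killing lift of the metric of $\bh P^n$, and its $\Sym^2(\cV)$-part $\|\xi\|^2$ is pure trace. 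In the paper's recursion the source term $-(m-1)\|\xi\|^2\tau L_{d-m+2}$ is pure trace as well. So restricting to the top $\cQ$-weight and setting the trace parts to zero is not how the recursion closes.

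Two smaller points. The noncommutativity of $J_1,J_2,J_3$ does not enter through separate operators $\orJ$ for each $\a$; there is a single $\rJ$ built from $J_\xi$. It enters through identities such as $J_\xi J_\a J_\xi=-2\<\xi,\xi_\a\>J_\xi+\|\xi\|^2J_\a$ and $\sum_\a J_\a J_\xi J_\a=J_\xi$, and through the connection of the $S^3$ fibres. These are what generate the $[\nu,\rJ]$ correction. Finally, the descent lemma of Subsection~\ref{ss:subm} goes the wrong way for your last step. That $K$ is Killing must come from verifying the system itself.
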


  The proof of Proposition~\ref{p:liftH} is given in Subsection~\ref{ss:liftHPn} below, with some technicalities moved to Subsection~\ref{ss:tensorial}. It is constructive: given a rank $d$ covariant Killing tensor field $L$ on $\bh P^n$, we pull it back to a covariant tensor field $L_d=\pi^*L$ on $S^{4n+3}$. The tensor field $L_d$ is $\Sp(1)$-invariant, but need not to be Killing on $S^{4n+3}$. Then we consecutively add
  terms to $L_d$ to make the resulting tensor Killing.

  Now we consider Killing, $\Sp(1)$-invariant tensor fields $K$ on $S^{4n+3}$. Under the map $K \mapsto \Phi_K$ given in Subsection~\ref{ss:sphere}, they are in one-to-one correspondence with the polynomials on $\br^{8n+8} = \{(X,P) \, | \, X, P \in \br^{4n+4}\}$, which are invariant under the action of the group $\SL(2)$ given in~\eqref{eq:SL2action} and are invariant under the action of the group $\Sp(1)$ given by $U.(X,P) = (UX, UP)$, for $U\in \Sp(1)$ (and are homogeneous in $X$ and in $P$, of the same degree).

  We already know two families of such polynomials. The first one is the space $\cL$ of polynomials coming from the $\Sp(1)$-invariant Killing covector fields: any such covector field corresponds under $\Phi$ to the polynomial
  \begin{equation}\label{eq:hvf}
  T_A= \<AX, P\>, \quad \text{where } A \in \spg(n+1),
  \end{equation}
  where $\spg(n+1)$ is the subalgebra of $\so(4n+4)$ which commutes with $\spg(1)$. The second family $\cQ$ has been constructed in~\cite[Section~2]{MN1}. Denote $V_{n+1}$ the space of symmetric operators on $\br^{4n+4}$ which commute with $\spg(1)$. For $S_1, S_2 \in V_{n+1}$, we define the polynomial $T_{S_1,S_2}$ on $\br^{8n+8}$ by
  \begin{equation} \label{eq:hTS}
    T_{S_1,S_2}(X,P)=\sum\nolimits_{\a=1}^{3} \<S_1J_\a X, P\>\<S_2J_\a X, P\>,
  \end{equation}
  for $X,P \in \br^{4n+4}$. As it is shown in~\cite[Section~2]{MN1}, every polynomial $T_{S_1,S_2}$ is $\Sp(1)$-invariant and moreover, the quadratic, covariant tensor field $K=K_{S_1,S_2}$ on the sphere $S^{4n+3}$ such that $\Phi_{K_{S_1,S_2}} =T_{S_1,S_2}$ is Killing. When $n \ge 3$, the space of such quadratic Killing tensor fields contains a subspace $\sQ$, an irreducible $\Sp(n+1)$-module of dimension $\frac16(n - 2)(n + 1)(2n + 1)(2n + 3)$ all of whose nonzero elements are indecomposable~\cite[Theorem~1]{MN1}. We denote $\cQ=\Span(T_{S_1,S_2} \, | \, S_1,S_2 \in V_{n+1})$.

  We prove that these two families generate the whole algebra of invariant polynomials:
  \begin{proposition} \label{p:polyH}
    The algebra of polynomials on $\br^{8n+8} = \{(X,P) \, | \, X, P \in \br^{4n+4}\}$ which are invariant under the simultaneous actions of the group $\SL(2)$ given in~\eqref{eq:SL2action} and the group $\Sp(1)$ given by $U.(X,P) = (UX, UP)$ is generated by the polynomials $T_A \in \cL$ given by~\eqref{eq:hvf} and the polynomials $T_{S_1,S_2} \in \cQ$ given by~\eqref{eq:hTS}.
  \end{proposition}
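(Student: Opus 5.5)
Following the approach used for Proposition~\ref{p:polyC}, I will complexify and reduce the statement to classical invariant theory. Over $\bc$, the space $\bc^{4n+4}$ with its $\spg(1)=\Span(J_1,J_2,J_3)$-action splits as $\bc^2\otimes \bc^{2n+2}$. Here $\Sp(1)_\bc=\SL(2,\bc)$ acts on the first factor, and the commuting group $\Sp(n+1)_\bc=\Sp(2n+2,\bc)$ acts on the second factor and preserves a symplectic form $\Omega$. Thus $\bc^{8n+8}=\{(X,P)\}\cong \bc^2_{\SL}\otimes\bc^2_{\Sp(1)}\otimes\bc^{2n+2}$, where the first $\bc^2$ carries the action~\eqref{eq:SL2action} of $\SL(2,\bc)$. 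Equivalently, the space is a collection of $2n+2$ vectors $y_1,\dots,y_{2n+2}$ in $\bc^2\otimes\bc^2=M_2(\bc)$. The group $\SL(2,\bc)\times\SL(2,\bc)$ acts on these by $y\mapsto AyB^{t}$, and $\SL(2)\times\SL(2)\to\SO(4,\bc)$ is a double cover. A polynomial invariant under $\SL(2)\times\Sp(1)$ is the same as a polynomial in the $2n+2$ vectors of $\bc^4$ invariant under $\SO(4,\bc)$.

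\textbf{Step 1.} I apply the First Fundamental Theorem for $\SO(4,\bc)$~\cite{P}. The invariants are generated by the inner products $\<y_a,y_b\>$ (given by $\det$ polarized on $M_2$) and by the $4\times4$ determinants $\det(y_a,y_b,y_c,y_d)$. The conditions that $\Phi$ be real and bihomogeneous in $X$ and $P$ of equal degree, which come from the diagonal torus of the first $\SL(2)$, are automatically satisfied by $\SO(4)$-invariants. So only the identification of the generators remains.

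\textbf{Step 2.} I identify the quadratic generators with $\cL$. Polarizing $\det$, the inner products $\<y_a,y_b\>$ span a space that is symmetric in $a,b$; it has dimension $(n+1)(2n+3)$, which equals $\dim\spg(n+1)$. Writing out $\<y_a,y_b\>$ in the original coordinates, as was done with the basis $E_i,F_i$ and~\eqref{eq:KvfC} in the $\bc P^n$ case, should show that these are $\bc$-linear combinations of the $T_A$, $A\in\spg(n+1)$. Under $\Sp(2n+2)$, $\Sym^2\bc^{2n+2}\cong\spg(n+1)_\bc$ is irreducible and the map is equivariant, so it suffices to check that the map is nonzero on a single element.

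\textbf{Step 3.} I identify the quartic generators with $\cQ$. The determinants span $\Lambda^4\bc^{2n+2}$, which has degree $2$ in $X$ and $2$ in $P$, matching~\eqref{eq:hTS}. The operators $S\in V_{n+1}$ complexify to elements of $\Lambda^2\bc^{2n+2}$ (via $\Omega$). The symmetrized products $T_{S_1,S_2}$ define an equivariant map $\Sym^2(\Lambda^2\bc^{2n+2})\to\bc[X,P]$. I need to show that its image contains $\Lambda^4\bc^{2n+2}$, modulo products of the $T_A$. Since $\Lambda^2\otimes\Lambda^2\supset\Lambda^4$ via the wedge product, and $\Lambda^4$ decomposes as $\Omega$-traceless part $\oplus$ lower pieces, I would verify two things. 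First, for $S_1,S_2$ corresponding to $e_1\wedge e_2$ and $e_3\wedge e_4$ with $e_1,\dots,e_4$ spanning a $\Omega$-isotropic subspace (possible when $n\ge3$), the polynomial $T_{S_1,S_2}$ is a nonzero multiple of $\det(y_1,y_2,y_3,y_4)$ plus possibly products of inner products. Second, $\Sp$-equivariance together with the irreducibility of the relevant components then yields all of $\Lambda^4$. The trace parts $\Omega\wedge\Lambda^2$ and $\Omega^2$ are handled the same way, or by using the relation expressing $\det(y_a,y_b,y_c,y_d)$ contracted with $\Omega$ via Gram-type identities. For $n\le 2$ the $\Omega$-traceless part of $\Lambda^4$ vanishes, which is consistent with the second claim of Theorem~\ref{t:hpn}.

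\textbf{Step 4.} The realness argument is the same as at the end of Subsection~\ref{ss:decoCPn}. The main obstacle is Step~3, namely the explicit computation of $T_{S_1,S_2}$ in the coordinates $y_a$ and the matching of the trace components. I would attempt it first with a concrete quaternionic basis, taking $S$ to be Hermitian-quaternionic matrices with two nonzero off-diagonal entries.
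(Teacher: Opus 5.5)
Your skeleton is the paper's: complexify, pass to $\SO(4,\bc)$ acting on $2n+2$ copies of $\bc^4$, apply the First Fundamental Theorem, identify the inner products with $\cL_\bc$, show the determinants lie in $\cQ_\bc+\Sym^2\cL_\bc$, and return to real coefficients. Steps 1, 2 and 4 are fine. The gap is in Step 3, which carries all the content of the proposition.

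You never establish how $T_{S_1,S_2}$ relates to a determinant modulo products of inner products. The isotropic-quadruple computation is only announced, not done. The components $\Omega\wedge\Lambda^2_0$ and $\bc\,\Omega^2$ of $\Lambda^4\bc^{2n+2}$ are only waved at (``handled the same way, or by Gram-type identities''). Since $\Lambda^4_0=0$ for $n\le 2$, for those $n$ the whole of Step 3 rests on exactly the part you left unspecified. For $n\ge 3$, your irreducibility reduction still needs a separate nonzero check in each of the three isotypic components.

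The missing ingredient is an explicit identity for $4\times 4$ determinants, which is what the paper uses. For $y^1,\dots,y^4\in\bc^4$,
\[
\det(y^1\,|\,y^2\,|\,y^3\,|\,y^4)=\<y^1,y^4\>\<y^2,y^3\>-\<y^1,y^3\>\<y^2,y^4\>+\sum\nolimits_{\a=1}^{3}\<y^1,\rL_\a y^2\>\<y^3,\rL_\a y^4\>,
\]
where $\rL_1,\rL_2,\rL_3$ are the matrices of left quaternionic multiplication; they span one of the two $\mathfrak{sl}(2)$-ideals of $\so(4,\bc)$. Conceptually, $\det$ is the Hodge-star pairing on $\Lambda^2\bc^4=\Lambda^2_+\oplus\Lambda^2_-$, and the sum of the two pairings is a Gram determinant.

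The second fact you need is this: for two fixed columns $y^s,y^t$ of $Z$, the three bilinear forms $\<y^s,\rL_\a y^t\>$ equal $\<SJ_\a X,P\>$ for one and the same $S\in V^\bc_{n+1}$. Hence every determinant equals some $T_{S_1,S_2}$ modulo $\Sym^2\cL_\bc$.

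In your language, the induced map $\Sym^2(\Lambda^2\bc^{2n+2})\to\Lambda^4\bc^{2n+2}$ is, up to a constant, the wedge product, which is obviously surjective. So you need no isotropy assumption, no splitting into traceless and trace parts, and no irreducibility argument, and the cases $n\le 2$ are covered uniformly.
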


  The proof of Proposition~\ref{p:polyH} is given in Subsection~\ref{ss:decoHPn}. It should be noted that the generators in Proposition~\ref{p:polyH} are far from being algebraically independent. First of all, one can show that the $\Sp(n+1)$-modules $\cQ$ and $\Sym^2(\cL)$ have some common nontrivial irreducible components. Moreover, one can check (or deduce from the proof) that the product of any two elements of $\cQ$ is a fourth degree polynomial in the elements of $\cL$; see Remark~\ref{rem:SFT} for more details.

  Modulo Propositions~\ref{p:liftH} and~\ref{p:polyH}, the proof of Theorem~\ref{t:hpn} is completed by~\cite[Theorem~4(a)]{MN3}. Indeed, it remains to show that any Killing tensor field on $\bh P^n, \, n \le 2$, is decomposable. Now, from Propositions~\ref{p:liftH} and~\ref{p:polyH} we deduce that the algebra of Killing tensor fields on $\bh P^n$, for all $n \ge 1$, is generated by linear and quadratic Killing fields, and then~\cite[Theorem~4(a)]{MN3} implies that for $n \le 2$, any quadratic Killing tensor field is decomposable.
\end{proof}

\subsection{Lifting Killing tensors from \texorpdfstring{$\bh P^n$}{\unichar{"210D}P\textnsuperior} to \texorpdfstring{$S^{4n+3}$}{S\textfoursuperior\textnsuperior\textplussuperior\textthreesuperior}. Proof of Proposition~\ref{p:liftH}}
\label{ss:liftHPn}

The idea of the proof is somewhat similar to that of Proposition~\ref{p:liftC} given in Subsection~\ref{ss:liftCPn}. However, as the vertical distribution $\cV$ is no longer $1$-dimensional, we will work on the sphere $S^{4n+3}$ rather than on the space $\bh P^n$, and the proof will be much more technically involved. We start with a rank $d$ Killing tensor field $L$ on $\bh P^n$ pullback to $L_d=\pi^*L$ on $S^{4n+3}$, an $\Sp(1)$-invariant section of the bundle $\Sym^d(\cH)$. The tensor field $L_d$ is not Killing on $S^{4n+3}$.
To make it into a Killing tensor field which descends to the given Killing tensor field on $\bh P^n$, we consecutively add to $L_d$ tensor fields $L_{d-m}$ which are the sections of the bundles $\Sym^m(\cV) \otimes \Sym^{d-m}(\cH)$ for $m=1, \dots, d$. These tensor fields must satisfy certain differential equations which we solve explicitly. To do that, we introduce certain differential and algebraic operators and study their commutators, similar to what we did in Subsection~\ref{ss:liftCPn}.

For $a, b \ge 0$, we define $\cS^{a,b}$ to be the space of smooth, symmetric, $\Sp(1)$-invariant, covariant tensor fields on $S^{4n+3}$ of rank $a+b$ such that $L(\xi^{k}, Y^l) = 0$ for any $L \in \cS^{a,b}$ and any vertical vector field $\xi$ and horizontal vector field $Y$, unless $k=a$ and $l = b$.

We define the following five operators:
\begin{equation}\label{eq:def4opH}
\begin{alignedat}{3}
  \delta: \, & \ocS^{a,b} \to \ocS^{a,b+1} & &\qquad\qquad& (\delta T)(\xi^a, Y^{b+1})& = (b+1) (\nabla_Y T)(\xi^a, Y^b),\\
  \iota: \, & \ocS^{a,b} \to \ocS^{a+1,b-1} & &\qquad\qquad& (\iota T)(\xi^{a+1}, Y^{b-1})& = \sum\nolimits_{i=1}^{4n} (\nabla_{e_i} T)(\xi^a, Y^{b-1},J_\xi e_i), \\
  \rJ: \, & \ocS^{a,b} \to \ocS^{a+1,b} & &\qquad\qquad& (\rJ T)(\xi^{a+1}, Y^b)& = b \, T (\xi^a, Y^{b-1},J_\xi Y), \\
  \tau: \, & \ocS^{a,b} \to \ocS^{a,b-2} & &\qquad\qquad& (\tau T)(\xi^a, Y^{b-2})& = \sum\nolimits_{i=1}^{4n} T (\xi^a, Y^{b-2},e_i, e_i), \\
  \nu: \, &  \ocS^{a,b} \to \ocS^{a,b} & &\qquad\qquad& (\nu T)(\xi^a, Y^b)& = \tbinom{a}{2} \|\xi\|^2\sum\nolimits_{\a=1}^{3} T (\xi_\a, \xi_\a,\xi^{a-2}, Y^b), \\
\end{alignedat}
\end{equation}
for $T \in \ocS^{a,b}$, where $\{e_i\}$ is an orthonormal horizontal frame, $\{\xi_\a\}$ is an orthonormal vertical frame, and $\xi$ and $Y$ are vertical and horizontal vector fields, respectively (and where we define $\ocS^{a,b} = 0$ when $b < 0$). Since the action of $\Sp(1)$ preserves the metric, the connection and the vertical and the horizontal distributions, all five operators above are well defined: they send $\Sp(1)$-invariant tensors to $\Sp(1)$-invariant tensors.

Suppose $K$ is a symmetric, covariant, $\Sp(1)$-invariant tensor field of rank $d$ on $S^{4n+3}$. We have the following decomposition:
\begin{equation}\label{eq:KLH}
  K=L_0 + L_1 + \dots + L_{d-1} + L_d, \quad \text{where  } L_{d-m} \in \cS^{m,d-m}.
\end{equation}

In terms of the tensor fields $L_m$, the Killing condition~\eqref{eq:defK} for $K$ is given as follows.
\begin{lemma} \label{l:KillingH}
  A symmetric, covariant, $\Sp(1)$-invariant tensor field $K$ given by~\eqref{eq:KLH} is Killing on $S^{4n+3}$ if and only if the following equations are satisfied:
  \begin{equation}\label{eq:KillingH}
    \delta L_{d-m} = 2m \, \rJ L_{d-m+1}, \qquad \text{for all  } m = 0, 1, \dots, d.
  \end{equation}
\end{lemma}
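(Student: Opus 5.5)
I will follow the scheme of the proof of Lemma~\ref{l:deltaJ}, evaluating the Killing condition on the sum of a vertical and a horizontal vector field and splitting the result by bi-degree. Since $K$ is symmetric, the Killing condition~\eqref{eq:defK} is equivalent to $(\nabla_Z K)(Z^d)=0$ for every vector field $Z$. At a given point $q\in S^{4n+3}$, take $Z=\xi+X$, where $X$ is basic with $\on_Y Y=0$ at $\pi(q)$ for $Y=\pi_*X$. For $\xi$ I will choose a vertical field generated by the action of a fixed element $v\in\spg(1)$, so that $\xi$ is Killing and geodesic. I will use the O'Neill formulas for this totally geodesic Hopf fibration: $\nabla_\xi\xi=0$ on vertical Killing fields through $q$, and $(\nabla_X X)^\cH=0$ at $q$. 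I also need $\nabla_X\xi=\nabla_\xi X+[X,\xi]$ with $[X,\xi]=0$ ($X$ basic, $\xi$ fundamental), and $\nabla_\xi X=J_\xi X$ up to the sign convention fixed by $\xi_\a=J_\a N$. Finally, the vertical part of $\nabla_X X$ vanishes because the $A$-tensor is skew.

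\textbf{Step 1.} Expand $(\nabla_Z K)(Z^d)=Z\big(K(Z^d)\big)-dK(Z^{d-1},\nabla_Z Z)$, using $K(Z^d)=\sum_m L_{d-m}(\xi^m,X^{d-m})$ by the type decomposition~\eqref{eq:KLH}. The correction term is $\nabla_Z Z=\nabla_X\xi+\nabla_\xi X=2J_\xi X$.

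\textbf{Step 2.} Show that the $\xi$-derivative contributes nothing after the correction. This is where $\Sp(1)$-invariance enters: $\mathcal L_\xi K=0$ for the fundamental field $\xi$, and $[\xi,X]=0$, so $\xi\big(K((\xi+X)^d)\big)=0$, exactly as in the proof of Lemma~\ref{l:deltaJ}.

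\textbf{Step 3.} Write $X\big(L_{d-m}(\xi^m,X^{d-m})\big)=(\nabla_X L_{d-m})(\xi^m,X^{d-m})+(\text{terms with }\nabla_X\xi,\nabla_X X)$. Since $\nabla_X X=0$ at $q$ and $\nabla_X\xi=J_\xi X$ is horizontal, the extra term from $\nabla_X\xi$ is $mL_{d-m}(\xi^{m-1},X^{d-m},J_\xi X)$. This term lies in the wrong bi-degree unless it is reinterpreted, and I expect the bookkeeping here to be the main obstacle. The type-$(m-1,d-m+1)$ component is exactly what feeds $\rJ L_{d-m+1}$, with combinatorial weight $(d-m+1)$. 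Together with the $2d$ from the correction term, after sorting by bi-degree in $(\xi,X)$, this should produce the coefficient $2m$ in~\eqref{eq:KillingH}.

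I must verify that the contributions of $\nabla_X\xi$ and of $-dK(Z^{d-1},2J_\xi X)$ combine to $-2m\,L_{d-m+1}(\xi^{m-1},X^{d-m},J_\xi X)$ in each degree. This should work out as in the $\bc P^n$ case, where the net factor $2d$ redistributes as $2m$ by degree.

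\textbf{Step 4.} Collect the terms homogeneous of degree $m$ in $\xi$ and degree $d-m+1$ in $X$. Since $\xi$ and $X$ range over arbitrary vertical and horizontal vectors at $q$, and both sides are tensorial and $\Sp(1)$-invariant, polarisation turns the identity into $\delta L_{d-m}=2m\,\rJ L_{d-m+1}$, using the definitions~\eqref{eq:def4opH}. Conversely, if these equations hold for all $m$, then $(\nabla_Z K)(Z^d)=0$ for all $Z=\xi+X$, and hence $K$ is Killing.
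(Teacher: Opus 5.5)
Your reduction in Steps 1, 2 and 4 is correct and matches the paper's. Taking $\xi$ to be a fundamental field instead of the paper's $\Sp(1)$-invariant vertical field is fine: you get $\mathcal{L}_\xi K=0$, $[\xi,X]=0$, $\nabla_\xi\xi=0$, and even $\nabla_X\xi=J_\xi X$ with no vertical component. It leads to the same reduction as the paper: $K$ is Killing if and only if $X\big(K(Z^d)\big)=2d\,K(Z^{d-1},J_\xi X)$ at $q$ for $Z=\xi+X$.

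The gap is in Step 3, which is the only place where the coefficient $2m$ is produced, and the mechanism you propose there does not exist.
\begin{itemize}
\item The Leibniz term $m\,L_{d-m}(\xi^{m-1},X^{d-m},J_\xi X)$ coming from $\nabla_X\xi$ is identically zero. After polarisation, $L_{d-m}\in\cS^{m,d-m}$ vanishes whenever the number of vertical arguments differs from $m$, and here only $m-1$ arguments are vertical. It also involves $L_{d-m}$, so it could never feed $\rJ L_{d-m+1}$.
\item Your expansion $K(Z^d)=\sum_m L_{d-m}(\xi^m,X^{d-m})$ is missing the weights. Here the $L_{d-m}$ are the pure-type components of $K$ itself, not factors in a $\odot$-decomposition as in Lemma~\ref{l:deltaJ}. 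Hence $K(Z^d)=\sum_m\binom{d}{m}L_{d-m}(\xi^m,X^{d-m})$.
\end{itemize}
If you used your unweighted expansion consistently on both sides, you would obtain $\delta L_{d-m}=2(d-m+1)\,\rJ L_{d-m+1}$, not the stated equation.

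The correct bookkeeping is short. At $q$ both Leibniz correction terms vanish: $\nabla_X X=0$, and the $\nabla_X\xi$ term vanishes by type. So
\[
X\big(K(Z^d)\big)=\sum_m\binom{d}{m}(\nabla_X L_{d-m})(\xi^m,X^{d-m}).
\]
Since $J_\xi X$ is horizontal,
\[
d\,K(Z^{d-1},J_\xi X)=\sum_m\binom{d}{m}(d-m)\,L_{d-m}(\xi^m,X^{d-m-1},J_\xi X).
\]
Evaluated on $(\xi^m,X^{d-m+1})$, the bidegree $(m,d-m+1)$ part of the Killing identity reads $\binom{d}{m}\frac{1}{d-m+1}\,\delta L_{d-m}=2\binom{d}{m-1}\,\rJ L_{d-m+1}$. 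The identity $\binom{d}{m-1}(d-m+1)=m\binom{d}{m}$ turns this into $\delta L_{d-m}=2m\,\rJ L_{d-m+1}$. So the $2m$ comes entirely from the binomial weights in the correction term, not from $\nabla_X\xi$. This is exactly the computation your proposal defers with ``should work out''.
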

\begin{proof}
  Let $Y$ be a basic vector field on $S^{4n+3}$ and $\xi$ be a vertical, $\Sp(1)$-invariant vector field. As $K$ is $\Sp(1)$-invariant, we have $\xi(K((Y+\xi)^d))=0$, and hence the Killing condition $\nabla_{Y+\xi} K((Y+\xi)^d)=0$ for $K$ is equivalent to the equation $(\nabla_Y K) ((Y+\xi)^d) = 2d K((Y+\xi)^{d-1}, J_\xi Y)$, by O'Neill formulas~\eqref{eq:nablaH1}. Now from~\eqref{eq:KLH} we have $K ((Y+\xi)^d) = \sum_{m=0}^{d} \binom{d}{m} L_{d-m}(\xi^m, Y^{d-m})$ and $dK ((Y+\xi)^{d-1},J_\xi Y) = \sum_{m=0}^{d} \binom{d}{m} (d-m) L_{d-m}(\xi^m,$ $Y^{d-m-1},J_\xi Y)$, and equations~\eqref{eq:KillingH} follow, using the definitions of $\delta$ and $\rJ$ from~\eqref{eq:def4opH}.
\end{proof}

Suppose we are given a Killing tensor field $L$ of rank $d$ on $\bh P^n$. We pull it back to a tensor field $L_d =\pi^*L \in \cS^{0,d}$. Note that $\delta L_d = 0$, which is the first equation of~\eqref{eq:KillingH}. To prove Proposition~\ref{p:liftC}, we need to construct tensor fields $L_{d-1} \in \ocS^{1,d-1}, \, L_{d-2} \in \ocS^{2,d-2}, \ldots, L_{1} \in \ocS^{d-1,1}$ and $L_0 \in \ocS^{d,0}$ such that the remaining $d$ equation of~\eqref{eq:KillingH} are also satisfied, and then the tensor field $K$ given by~\eqref{eq:KLH} will be a Killing $\Sp(1)$-invariant tensor field which descends to $L$, as required. 

We prove the following:
\begin{lemma} \label{l:bracketsH}
  The operators~\eqref{eq:def4opH} satisfy the following commutation relations: for $T \in \ocS^{a,b}$,
  \begin{gather}
    2 ([\iota, \rJ] \, T)(\xi^{a+2}, Y^{b-1}) = \|\xi\|^2 ([\delta, \tau] \, T)(\xi^a, Y^{b-1}), \label{eq:iJH} \\
    ([\nu, \rJ] \, T)(\xi^{a+1}, Y^{b}) = a b \|\xi\|^2 \sum\nolimits_{\a=1}^{3} T(\xi^{a-1}, \xi_\a, Y^{b-1},J_\a Y), \label{eq:nuJH} \\
    [\tau, \rJ] = [\delta, \nu] = [\tau, \nu] = 0, \label{eq:tJH} \\
    [\delta, \iota] \, T =  (c(a,b) \rJ  - 4 [\nu, \rJ]) T, \quad \text{where} \quad c(a,b) = -4(2n+b-a-1), \label{eq:diH}
  \end{gather}
  and where $\xi$ and $Y$ are vertical and horizontal vector fields, respectively.
\end{lemma}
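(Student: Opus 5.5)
The plan is to mimic the proof of Lemma~\ref{l:brackets}, working on $S^{4n+3}$ with the O'Neill formulas for the Hopf fibration $\pi: S^{4n+3}\to\bh P^n$. For $X,Y$ horizontal, $\xi$ vertical and $\Sp(1)$-invariant, these read
\[
\nabla_X Y = (\nabla_X Y)^{\cH} - \sum\nolimits_\a \<J_\a X,Y\>\xi_\a, \qquad \nabla_X\xi = \nabla_\xi X = J_\xi X .
\]
The vertical part of $\nabla_\xi \xi$ is computed inside the totally geodesic fibre $S^3$. All identities are pointwise identities between tensors evaluated on $(\xi^{\cdot},Y^{\cdot})$, so it suffices to verify them at a fixed point $q$. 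There we choose a basic $Y$ and a basic horizontal frame $\{e_i\}$ whose projections are $\on$-parallel at $\pi(q)$, and an invariant vertical $\xi$. The horizontal derivatives then reduce to derivatives of functions plus explicit vertical correction terms of the form $\<J_\a e_i, Y\>\xi_\a$ and $J_\xi Y$.

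First come the algebraic identities. In \eqref{eq:tJH}, $[\tau,\rJ]=0$ holds because $\sum_i T(\dots,e_i,J_\xi e_i)=0$ by symmetry of $T$ and skew-symmetry of $J_\xi$, exactly as in the complex case. The identity $[\tau,\nu]=0$ is immediate, since the two operators contract disjoint sets of slots. The identity \eqref{eq:nuJH} is a direct expansion. In $\nu\rJ T$ the trace $\sum_\a(\cdot)(\xi_\a,\xi_\a,\dots)$ must be applied to $\xi^{a+1}$, one of whose slots carries the $J_\xi Y$ produced by $\rJ$. We polarise in $\xi$: $\rJ T(\xi^{a+1},Y^b)$ is a polynomial of degree $a+1$ in $\xi$, with $\xi$ entering linearly through $J_\xi$. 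Taking the Laplacian in $\xi$, the cross terms where one derivative hits $J_\xi$ give precisely $ab\sum_\a T(\xi^{a-1},\xi_\a,Y^{b-1},J_\a Y)$, after tracking the binomial normalisations.

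Next come the first-order identities. The identity $[\delta,\nu]=0$ holds because $\nu$ is built from the metric on $\cV$ and the fields $\xi_\a$. Since $\nabla_Y$ maps $\cV$ to $\cH$ by $J$, the only extra terms are $\sum_\a T(J_\a Y,\xi_\a,\dots)$, which vanish because $T\in\cS^{a,b}$ has the wrong type when a horizontal vector sits in a vertical slot. For \eqref{eq:iJH} we compute as in Lemma~\ref{l:brackets}. In $\iota\rJ T - \rJ\iota T$, the derivative $\nabla_{e_i}$ falling on $J_\xi Y$ contributes terms $\nabla_{e_i}(J_\xi)$. The remaining terms cancel, leaving $-\sum_i(\nabla_{e_i}T)(\xi^{a},Y^{b-1},e_i)$ times a factor coming from $J_\xi J_\xi=-\|\xi\|^2$. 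Meanwhile $[\delta,\tau]T$ gives $2\sum_i(\nabla_{e_i}T)(\xi^a,Y^{b-1},e_i)$. We then match the two results, and the factors $2$ and $\|\xi\|^2$ come from $J_\xi^2$ and from the two extra $\xi$ slots.

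The main obstacle is \eqref{eq:diH}. We write $\delta\iota-\iota\delta$ as a sum of second covariant derivatives $\nabla^2_{Y,e_i}-\nabla^2_{e_i,Y}$ and $\nabla^2_{e_i,J_\xi e_i}$. We also collect the first-order terms that come from vertical components of $\nabla_{e_i}Y$ and $\nabla_{e_i}e_j$: the first-order vertical terms produce contributions involving $\nabla_{\xi_\a}T$, which we evaluate by $\Sp(1)$-invariance ($\cL_{\xi}$-type identities convert $\nabla_{\xi_\a}$ into algebraic $J$-actions). The commutators become curvature terms $R(Y,e_i)T$ and $R(e_i,J_\xi e_i)T$, with $R$ the curvature of the unit sphere, $R(X,Z)W=\<Z,W\>X-\<X,W\>Z$. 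Here we note that $R$ acting on $T$ now also rotates vertical slots into horizontal ones and vice versa. These cross terms, together with the invariance terms, should assemble into the non-scalar piece $-4[\nu,\rJ]T$, which we recognise through \eqref{eq:nuJH}. The scalar pieces should give $c(a,b)=-4(2n+b-a-1)$; we check this against the case $a=0$, where it must reduce to $-4(n'+b)$ with the appropriate $n'$, consistently with \eqref{eq:brackets}. The bookkeeping of the vertical slots is where I expect errors, so I would verify the coefficients on the test case $a=1,b=1$.
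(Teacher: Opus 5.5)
Your handling of \eqref{eq:tJH}, of $[\delta,\nu]=0$ (the only extra terms are $\sum_\a T(J_\a Y,\xi_\a,\dots)$, which vanish by type) and of \eqref{eq:nuJH} via $\nu=\tfrac12\|\xi\|^2\Delta_\xi$ is correct and matches the paper. For \eqref{eq:iJH} the outcome is right, but fix two things. The signs are $[\iota,\rJ]T=-\|\xi\|^2\sum_i(\nabla_{e_i}T)(\xi^a,Y^{b-1},e_i)$ and $[\delta,\tau]T=-2\sum_i(\nabla_{e_i}T)(\xi^a,Y^{b-1},e_i)$. And the factor $\|\xi\|^2$ comes from $J_\xi(J_\xi e_i)=-\|\xi\|^2e_i$ in the polarised $\rJ T$ with last slot $J_\xi e_i$, not from derivatives of $J_\xi$.

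The genuine gap is \eqref{eq:diH}, which is the real content of the lemma. You do not compute it, and the mechanism you predict is wrong. On the unit sphere, $R(X,Z)W=\<Z,W\>X-\<X,W\>Z$ annihilates vertical $W$ when $X,Z$ are horizontal, and it preserves $\cH$. So $R(Y,e_i)T$ and $R(e_i,J_\xi e_i)T$ never rotate slots between $\cV$ and $\cH$. Together they contribute only $-b(b+1)T(\xi^a,Y^{b-1},J_\xi Y)$, a multiple of $\rJ T$ independent of $a$ and $n$. Hence the whole term $-4[\nu,\rJ]T=-4ab\|\xi\|^2\sum_\a T(\xi^{a-1},\xi_\a,Y^{b-1},J_\a Y)$, and all the $a$- and $n$-dependence of $c(a,b)$, come from first-order terms your sketch does not set up:
(i) $\nabla_ZT$ is not of pure type although $T$ is. Namely $(\nabla_ZT)(\eta^{a+1},Y^{b-1})=-(a+1)T(\eta^a,J_\eta Z,Y^{b-1})$ and $(\nabla_ZT)(\eta^{a-1},Y^{b+1})=(b+1)\sum_\a\<J_\a Z,Y\>T(\eta^{a-1},\xi_\a,Y^b)$. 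These enter whenever $\nabla_Y$ or $\nabla_{e_i}$ falls on $\xi$ or on $J_\xi e_i$.
(ii) In the terms $\nabla_{\nabla_Ye_i}T$, evaluating $\nabla_{\xi_\a}T$ by invariance produces $T(\dots,\nabla_{\xi_\a}\xi,\dots)$. This is governed by the fibre connection $\nabla_{\xi_\a}\xi_\b=\xi_\gamma$, not merely by an algebraic $J$-action.
(iii) The term $\nabla_{\nabla_{e_i}J_\xi e_i}T$, with $\sum_i\nabla_{e_i}(J_\xi e_i)=-4n\xi$.

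Two further points. First, your O'Neill formula $\nabla_X\xi=J_\xi X$ drops $(\nabla_X\xi)^{\cV}$, which is nonzero for a general $\Sp(1)$-invariant vertical field. The paper first proves (Lemma~\ref{l:atapoint}, using $R(X_1,X_2,\eta,X_3)=0$ on the sphere) that invariant fields can be chosen with $(\nabla_Z\xi)^{\cV}=(\nabla_Z\xi_\a)^{\cV}=0$ and simultaneously $(\nabla_Z(J_\xi e_i))^{\cH}=(\nabla_Z(J_\xi Y))^{\cH}=0$ at the point. Without such a normalisation you must track and cancel additional connection-form terms.

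Second, the checks you propose cannot stand in for the computation. Nothing forces the case $a=0$ to reproduce \eqref{eq:brackets}: the base is now $\bh P^n$, with a different curvature tensor, and matching $c(0,b)=-4(2n+b-1)$ needs the ad hoc value $n'=2n-1$. A single test case $a=b=1$ cannot pin down a coefficient depending on $a$, $b$ and $n$, nor the factor $-4$ in front of $[\nu,\rJ]$. What is required is the complete expansion of $\delta\iota T$ and $\iota\delta T$ at a point in a normalised frame. It is simplified with $J_\xi J_\a J_\xi=-2\<\xi,\xi_\a\>J_\xi+\|\xi\|^2J_\a$, $\sum_\a J_\a J_\xi J_\a=J_\xi$ and $\sum_\a T(\dots,J_\a Y,J_\xi J_\a Y)=-T(\dots,Y,J_\xi Y)$. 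Summing all contributions then gives $([\delta,\iota]T)(\xi^{a+1},Y^b)=-4b(2n+b-a-1)T(\xi^a,J_\xi Y,Y^{b-1})-4ab\|\xi\|^2\sum_\a T(\xi^{a-1},\xi_\a,Y^{b-1},J_\a Y)$.
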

The proof of Lemma~\ref{l:bracketsH} is a direct, but quite lengthy, sequence of tensorial calculations. It is given in Subsection~\ref{ss:tensorial} below.

Assuming Lemma~\ref{l:bracketsH}, the proof of Proposition~\ref{p:liftH} is completed by the following lemma.

\begin{lemma} \label{l:lifts}
  The following holds.
  \begin{enumerate}[label=\emph{(\alph*)},ref=\alph*]
    \item \label{it:nuinv}
    For any $a > 0$ and $b \ge 0$, the operator $-\frac{2}{a}(\nu + a(2n+b)\, \id)$ on the space $\ocS^{a,b}$ is invertible.

    \item \label{it:Ld-m}
    Given a tensor filed $L_d \in \ocS^{0,d}$ with $\delta L_d = 0$, the tensor fields $L_{d-m} \in \ocS^{m,d-m}$, $m = 1, \dots, d$, consecutively defined as below satisfy equations~\eqref{eq:KillingH}:
    \begin{align}
        L_{d-1} & = - (2(2n+d-1))^{-1} \iota L_d, \label{eq:Ld-1} \\
        -\tfrac{2}{m}(\nu + m(2n+d-m))L_{d-m} & = \iota L_{d-m+1} - (m-1)\|\xi\|^2 \tau L_{d-m+2}, \; \; m =1,\dots,d \label{eq:Ld-m}.
    \end{align}
  \end{enumerate}
\end{lemma}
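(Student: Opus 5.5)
We prove part (\ref{it:nuinv}) by fibrewise linear algebra and part (\ref{it:Ld-m}) by induction on $m$, using the commutation relations of Lemma~\ref{l:bracketsH} in the same way as the proof of Proposition~\ref{p:liftC} used Lemma~\ref{l:brackets}.

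\emph{Part (\ref{it:nuinv}).} The operator $\nu$ is algebraic: it acts only on the vertical arguments. Fix a point and the horizontal arguments $Y^b$. Then $T \in \ocS^{a,b}$ gives a homogeneous polynomial $p(\xi)=T(\xi^a,Y^b)$ of degree $a$ on the $3$-dimensional vertical space. Polarisation gives $\sum_\a T(\xi_\a,\xi_\a,\xi^{a-2},Y^b)=\frac{1}{a(a-1)}\Delta p$, so
\[
\nu p=\tfrac12\|\xi\|^2\Delta p .
\]
Use the harmonic decomposition $p=\sum_k\|\xi\|^{2k}h_{a-2k}$ with $h_l$ harmonic of degree $l$. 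Since $\Delta(\|\xi\|^{2k}h_l)=2k(2k+2l+1)\|\xi\|^{2k-2}h_l$, each summand is an eigenvector of $\nu$ with eigenvalue $k(2k+2l+1)\ge0$. Hence $\nu+a(2n+b)\,\id$ has all eigenvalues at least $a(2n+b)>0$. The decomposition is $\mathrm{O}(3)$-equivariant and pointwise constant, so the inverse is a smooth, $\Sp(1)$-equivariant bundle map. It therefore preserves $\ocS^{a,b}$, and $-\frac2a(\nu+a(2n+b))$ is invertible on $\ocS^{a,b}$.

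\emph{Part (\ref{it:Ld-m}).} By part (\ref{it:nuinv}), \eqref{eq:Ld-m} uniquely defines each $L_{d-m}$.

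For $m=1$, note that $\nu=0$ on $\ocS^{1,*}$, so \eqref{eq:Ld-m} reduces to \eqref{eq:Ld-1}. Then
\[
\delta L_{d-1}=-(2(2n+d-1))^{-1}([\delta,\iota]+\iota\delta)L_d = -(2(2n+d-1))^{-1}c(0,d)\,\rJ L_d = 2\rJ L_d,
\]
using $\delta L_d=0$, $[\nu,\rJ]L_d=0$ (because $a=0$ in \eqref{eq:nuJH}), and $c(0,d)=-4(2n+d-1)$.

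For the inductive step, assume \eqref{eq:KillingH} holds for all indices below $m$, and apply $\delta$ to \eqref{eq:Ld-m}.
\begin{itemize}
\item On the left, $\delta$ commutes with $\nu$ by \eqref{eq:tJH}. It also commutes with multiplication by $\|\xi\|^2$, since $\xi$ is a vertical argument and not a derivative direction. So the left side becomes $-\frac2m(\nu+m(2n+d-m))\,\delta L_{d-m}$.
\item On the right, write $\delta\iota=[\delta,\iota]+\iota\delta$ and $\delta\tau=[\delta,\tau]+\tau\delta$.
\item Substitute $\delta L_{d-m+1}=2(m-1)\rJ L_{d-m+2}$ and $\delta L_{d-m+2}=2(m-2)\rJ L_{d-m+3}$.
\item Convert $\|\xi\|^2[\delta,\tau]$ into $2[\iota,\rJ]$ via \eqref{eq:iJH}, and use $[\tau,\rJ]=0$.
\end{itemize}
The $\iota$ and $\tau$ terms then regroup exactly as in the $\bc P^n$ case. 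They give $2(m-1)\rJ\big(\iota L_{d-m+2}-(m-2)\|\xi\|^2\tau L_{d-m+3}\big)$, which by \eqref{eq:Ld-m} at level $m-1$ equals
\[
-4\rJ\big(\nu+(m-1)(2n+d-m+1)\big)L_{d-m+1}.
\]
Adding $[\delta,\iota]L_{d-m+1}=(c\,\rJ-4[\nu,\rJ])L_{d-m+1}$ with $c=c(m-1,d-m+1)=-4(2n+d-2m+1)$, the $\rJ\nu$ terms cancel. Writing $N=2n+d-m$, the constant term is
\[
c-4(m-1)(N+1)=-4\big((N-m+1)+(m-1)(N+1)\big)=-4mN .
\]
So the right side equals $-4(\nu+mN)\rJ L_{d-m+1}$, which is $-\frac2m(\nu+mN)\big(2m\,\rJ L_{d-m+1}\big)$. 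Cancelling the invertible operator from part (\ref{it:nuinv}) on $\ocS^{m,d-m+1}$ gives $\delta L_{d-m}=2m\,\rJ L_{d-m+1}$, as required.

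The main obstacle is the bookkeeping in this step. One must check that \eqref{eq:iJH} is applied with the correct bidegrees and powers of $\|\xi\|$, and that $\|\xi\|^2$ commutes with $\rJ$ and $\iota$. I expect these both to hold, because the weights $\|\xi\|^2$ and $J_\xi$ are built only from the vertical argument $\xi$, but it needs to be verified carefully.
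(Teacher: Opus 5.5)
Your proposal is correct and follows the paper's proof. Part (a) uses the same harmonic decomposition of degree-$a$ polynomials on the $3$-dimensional vertical space, on which $\nu$ acts as $\frac12\|\xi\|^2\Delta$ with nonnegative eigenvalues. Part (b) uses the same induction via \eqref{eq:diH}, \eqref{eq:iJH}, $[\tau,\rJ]=[\delta,\nu]=0$, together with the constant identity $c(m-1,d-m+1)-4(m-1)(2n+d-m+1)=-4m(2n+d-m)$.

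Your eigenvalue $k(2k+2l+1)$ with $l=a-2k$ is the correct one. The paper's $k(2k+2a-3)$ is off for $k\ge 2$, but harmlessly, since only nonnegativity is used. The commutations of $\|\xi\|^2$ with $\delta$ and $\rJ$ that you flag do hold, and the paper uses them implicitly too.
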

The tensor field $L_{d-m}$ in~\eqref{eq:Ld-m} is well defined, by assertion~\eqref{it:nuinv}.

\begin{proof}
  We note that it is sufficient to prove assertion~\eqref{it:nuinv} for the action of $\nu$ on the space of tensors $\ocS^{a,0}$ at a single point $p$. The latter is linearly isomorphic to the space of homogeneous polynomials $\cP^a$ of degree $a$ in three variables $(\eta_1,\eta_2,\eta_3)$, where we put $\xi= \sum_{\a=1}^{3} \eta_\a \xi_\a$ at the point $p$. Under this linear isomorphism, the action of $\nu$ on $\ocS^{a,0}$ corresponds to the action $\tilde{\nu}(f) = \frac12 \|\eta\|^2 \Delta_\eta f$ for $f \in \cP^a$. The space $\cP^a$ has the direct sum decomposition $\cP^a = \cH^a \oplus \|\eta\|^2 \cH^{a-2} \oplus \|\eta\|^4 \cH^{a-4} \oplus \dots$, where $\cH^k \subset \cP^k$ is the space of harmonic homogeneous polynomials of degree $k$. Each subspace $\|\eta\|^{2k} \cH^{a-2k},\, 0 \le k \le a/2$, in this decomposition is an eigenspace of the operator $\tilde{\nu}$ with eigenvalue $k(2k+2a-3)$. As any such eigenvalue is non-negative, the operator $\tilde{\nu} + a(2n+b)\,\id$ is invertible on $\cP^a$, and hence the operator $\nu+ a(2n+b)\id$ is invertible on $\ocS^{a,b}$, for all $a>0,  \, b \ge 0$.

  We now prove assertion~\eqref{it:Ld-m}.

  To check equation~\eqref{eq:Ld-1} we note that $[\nu, \rJ] L_d = 0$ by~\eqref{eq:nuJH} and $\delta L_d = 0$ by assumption. Then from~\eqref{eq:diH} we find $\delta \iota L_d = ([\delta,\iota] + \iota \delta ) L_d = - 4(2n+d-1) \rJ L_d$, and so $L_{d-1}$ defined by~\eqref{eq:Ld-1} satisfies the equation $\delta L_{d-1} = 2 \rJ L_d$, as required.

  Let $k > 1$, and assume that equations~\eqref{eq:KillingH} are satisfied by the tensor fields $L_{d-m}$ defined by~\eqref{eq:Ld-1} and~\eqref{eq:Ld-m} for $m < k$. Acting by $\delta$ on the right-hand side of equation~\eqref{eq:Ld-m} we obtain

  \begin{equation*}
  \begin{split}
    & \delta (\iota L_{d-k+1} - (k-1)\|\xi\|^2 \tau L_{d-k+2}) \\
    &= ([\delta, \iota] + \iota \delta) L_{d-k+1} - (k-1)\|\xi\|^2 ([\delta, \tau] + \tau \delta) L_{d-k+2} \\
    &
    {\begin{aligned}
    = (c(k-1,d-k+1) \rJ - 4 [\nu, \rJ]) L_{d-k+1} & + 2(k-1) \iota \rJ L_{d-k+2} \\
    & - (k-1)(2 [\iota, \rJ] + \|\xi\|^2 \tau \delta) L_{d-k+2}
    \end{aligned}}
    \\
    & = (c(k-1,d-k+1) \rJ - 4 [\nu, \rJ]) L_{d-k+1} + 2(k-1) \rJ (\iota L_{d-k+2} - 2(k-2) \|\xi\|^2 \tau L_{d-k+3}) \\
    & = (c(k-1,d-k+1) \rJ - 4 [\nu, \rJ]) L_{d-k+1} - 4 \rJ (\nu + (k-1)(2n+d-k+1)\id) L_{d-k+1} \\
    & = - 4 (\nu + k(2n + d - k)\id) \rJ L_{d-k+1},
  \end{split}
  \end{equation*}
  where in the first equation we used~\eqref{eq:diH} and~\eqref{eq:iJH}, in the second equation, \eqref{eq:KillingH} with $m=k-2$ and the fact that $[\tau, \rJ] = 0$ from~\eqref{eq:tJH}, in the third, equation~\eqref{eq:Ld-m} for $m = k-2$, and in the fourth, the formula for $c(k-1,d-k+1)$ given in~\eqref{eq:diH}.

  Since $\delta$ commutes with $\nu$ by~\eqref{eq:tJH}, we now obtain from~\eqref{eq:Ld-m} with $m=k$ that $\delta L_{d-k} = 2k \rJ L_{d-k+1}$, as required.
\end{proof}

\begin{remark} \label{rem:higher}
  If one wishes to write down an explicit formula for $L_{d-m}, \, m > 1$, one can find the operator inverse to the operator $-\tfrac{2}{m}(\nu + m(2n+d-m))$ in~\eqref{eq:Ld-m} on $\ocS^{m,d-m}$. This can be done directly, as from the proof of assertion~\eqref{it:nuinv}, we know the minimal polynomial of the action of $\nu$ on $\ocS^{m,d-m}$.
\end{remark}

%

\subsection{Proof of Proposition~\ref{p:polyH}}
\label{ss:decoHPn}

  The proof goes somewhat similar to that of Proposition~\ref{p:polyC}: after complexification, we apply the First Fundamental Theorem of Invariant Theory to the sum of $2n+2$ copies of the standard modules $\bc^4$ of the group $\SO(4,\bc) \cong \SL(2,\bc) \times \Sp(1,\bc)/\mathbb{Z}_2$.

  In the space $\br^{8n+8} = \br^{4n+4} \oplus \br^{4n+4} = \{(X, P) \, | \, X, P \in \br^{4n+4}\}$, introduce orthonormal bases $\{e_i\}$ and $\{f_i\},\, i=1, \dots, 4n+4$, for the two copies of the space $\br^{4n+4}$ as in Subsection~\ref{ss:sphere}. We specify the bases $\{e_i\}$ and $\{f_i\}$ in such a way that relative to them, the operators $J_\alpha, \, \alpha = 1, 2, 3$, are given by the matrices $\diag(\rL_\alpha, \dots, \rL_\alpha)$ ($n+1$ diagonal $4 \times 4$ blocks), where
  \begin{equation*}
    \rL_1 = \begin{pmatrix}
                0 & -1 & 0 & 0 \\
                1 & 0 & 0 & 0 \\
                0 & 0 & 0 & -1 \\
                0 & 0 & 1 & 0 \\
              \end{pmatrix}, \quad
    \rL_2 = \begin{pmatrix}
                0 & 0 & -1 & 0 \\
                0 & 0 & 0 & 1 \\
                1 & 0 & 0 & 0 \\
                0 & -1 & 0 & 0 \\
              \end{pmatrix}, \quad
    \rL_3 = \begin{pmatrix}
                0 & 0 & 0 & -1 \\
                0 & 0 & -1 & 0 \\
                0 & 1 & 0 & 0 \\
                1 & 0 & 0 & 0 \\
              \end{pmatrix}
  \end{equation*}
  (so that $\rL_1, \rL_2$ and $\rL_3$ are the matrices of the left multiplications by the imaginary quaternions $\ri, \mathrm{j}$ and $\mathrm{k}$, respectively). Similarly, we introduce the matrices of the right multiplications by
  \begin{equation*}
    \rR_1 = \begin{pmatrix}
                0 & -1 & 0 & 0 \\
                1 & 0 & 0 & 0 \\
                0 & 0 & 0 & 1 \\
                0 & 0 & -1 & 0 \\
              \end{pmatrix}, \quad
    \rR_2 = \begin{pmatrix}
                0 & 0 & -1 & 0 \\
                0 & 0 & 0 & -1 \\
                1 & 0 & 0 & 0 \\
                0 & 1 & 0 & 0 \\
              \end{pmatrix}, \quad
    \rR_3 = \begin{pmatrix}
                0 & 0 & 0 & -1 \\
                0 & 0 & 1 & 0 \\
                0 & -1 & 0 & 0 \\
                1 & 0 & 0 & 0 \\
              \end{pmatrix}.
  \end{equation*}
  Clearly $[\rL_\alpha, \rR_\beta] = 0$, and also $\rL_1\rL_2=\rL_3$, but $\rR_1\rR_2=-\rR_3$.

  The action of the group $\Sp(1)$ on the space $\br^{8n+8}$ is given by
  \begin{equation}\label{eq:Sp1action}
  U.(X,P) = (UX, UP), \text{ for } U=u_0 I_{4n}+u_1 J_1 + u_2 J_2 + u_3 J_3 \in \Sp(1),
  \end{equation}
  where $\sum_{\alpha=0}^{3} u_\alpha^2 = 1$. In particular, the space $\cL$ of polynomials $T_A$ given by~\eqref{eq:hvf} is spanned by the polynomials
  \begin{equation}\label{eq:KvfH}
    W_{jk,\alpha} = \<\rR_\alpha x^{(j)}, p^{(k)}\> + \<\rR_\alpha x^{(k)}, p^{(j)}\>, \quad \text{and} \quad W_{jk,0} = \<x^{(j)}, p^{(k)}\> - \<x^{(k)}, p^{(j)}\>,
  \end{equation}
  for $1 \le j, k \le n+1,\; \alpha =1,2,3$, where $x^{(j)}=(x_{4j-3},x_{4j-2},x_{4j-1},x_{4j})^t$ and $p^{(j)}=(p_{4j-3},p_{4j-2},p_{4j-1}, p_{4j})^t$.

  The actions of $\SL(2)$ and $\Sp(1)$ on $\br^{8n+8}$ defined by~\eqref{eq:SL2action} and~\eqref{eq:Sp1action} commute, and so we need to prove that the algebra of polynomials on $\br^{8n+8}$ invariant under the action of the group $(\SL(2) \times \Sp(1))/\mathbb{Z}_2$ given by~\eqref{eq:SL2action} and~\eqref{eq:Sp1action} is generated by the polynomials $W_{jk,\alpha}$ and $W_{jk,0}$ defined by~\eqref{eq:KvfH} and the polynomials $T_{S_1,S_2}$ defined by \eqref{eq:hTS}. Here $\mathbb{Z}_2 = \{(I_2,I_2),(-I_2,-I_2)\}$ is the ineffective kernel of the action.

  We complexify the space $\br^{8n+8}$ and consider the action of the group $\SL(2,\bc) \times \Sp(1,\bc)$ on the space $\bc^{8n+8}$ defined by the same formulas~\eqref{eq:SL2action} and~\eqref{eq:Sp1action}, but with complex coefficients. We note that $\Sp(1,\bc) \cong \SL(2,\bc)$ and that $(\SL(2,\bc) \times \SL(2,\bc))/\mathbb{Z}_2 \cong \SO(4,\bc)$.

  Introduce a new basis $\{E_i,F_i\},\, i = 1, \dots, 4n+4$, for $\bc^{8n+8}$ by defining
  \begin{equation*}
  \begin{pmatrix} E_{4j-3} \\ E_{4j-2} \\ E_{4j-1} \\ E_{4j} \\ F_{4j-3} \\ F_{4j-2} \\ F_{4j-1} \\ F_{4j} \end{pmatrix} =
  \begin{pmatrix}
    1 & \ri & 0 & 0 & 0 & 0 & 1 & -\ri \\
    -\ri & 1 & 0 & 0 & 0 & 0 & \ri & 1 \\
    0 & 0 & 1 & -\ri & -1 & -\ri & 0 & 0 \\
    0 & 0 & \ri & 1 & \ri & -1 & 0 & 0 \\
    0 & 0 & -1 & -\ri & 1 & -\ri & 0 & 0 \\
    0 & 0 & \ri & -1 & \ri & 1 & 0 & 0 \\
    1 & -\ri & 0 & 0 & 0 & 0 & 1 & \ri \\
    \ri & 1 & 0 & 0 & 0 & 0 & -\ri & 1
  \end{pmatrix}
  \begin{pmatrix} e_{4j-3} \\ e_{4j-2} \\ e_{4j-1} \\ e_{4j} \\ f_{4j-3} \\ f_{4j-2} \\ f_{4j-1} \\ f_{4j} \end{pmatrix},
  \end{equation*}
  for $j=1,\dots, n+1$. The corresponding coordinate change is given by
  \begin{equation}\label{eq:xptozw}
    z^{(j)} = \tfrac14(I_4 + \ri R_1) x^{(j)} -\tfrac14(R_2 + \ri R_3) p^{(j)}, \quad w^{(j)} = \tfrac14(R_2 - \ri R_3) x^{(j)} + \tfrac14(I_4 - \ri R_1) p^{(j)},
  \end{equation}
  where $\sum_{i=1}^{4n+4}(x_ie_i+p_if_i)=\sum_{i=1}^{4n+4}(z_iE_i+w_iF_i)$ and where $z^{(j)}=(z_{4j-3},z_{4j-2},z_{4j-1},z_{4j})^t$ and $w^{(j)}=(w_{4j-3},w_{4j-2},w_{4j-1}, w_{4j})^t$, for $j=1, \dots, n+1$.
  One can check directly that the subspaces $\Span_\bc(E_{4j-3}, E_{4j-2}, E_{4j-1}, E_{4j})$ and $\Span_\bc(F_{4j-3}, F_{4j-2}, F_{4j-1}, F_{4j})$ are $\SO(4,\bc)$-invariant. Moreover, the action of any element of $\SO(4,\bc)$ on each of these subspaces, relative to the chosen bases, is given by the same matrix, and this action preserves the quadratic form $\<z^{(j)}, z^{(j)}\> = z^2_{4j-3}+z^2_{4j-2}+z^2_{4j-1}+z^2_{4j}$ (respectively, $\<w^{(j)}, w^{(j)}\> = w^2_{4j-3}+w^2_{4j-2}+w^2_{4j-1}+w^2_{4j}$), where we continue using the same notation $\ip$ for the complex bilinear extension of the inner product from $\br^4$ to $\bc^4$.

  Thus we have the group $\SO(4,\bc)$ simultaneously acting on $2n+2$ copies of its standard representation on $\bc^4$, or equivalently, acting by the left multiplication on the $4 \times (2n+2)$ matrix $Z=(z^{(1)} \,| \, w^{(1)} \,| \dots | z^{(n+1)} \,| \, w^{(n+1)})$. By the First Fundamental Theorem of Invariant Theory~\cite[Theorem~11.2.1]{P}, the algebra of invariant polynomials of this action is generated by inner products of the pairs of columns of $Z$ and by $4 \times 4$ determinants formed by the quadruples of columns of $Z$. For the inner products we obtain, using~\eqref{eq:xptozw} and~\eqref{eq:KvfH}:
  \begin{gather*}
    \<z^{(j)}, z^{(k)}\> = \tfrac18 (W_{jk,2} + \ri W_{jk,3}), \qquad \<w^{(j)}, w^{(k)}\> = \tfrac18 (W_{jk,2} - \ri W_{jk,3}), \\
    \<z^{(j)}, w^{(k)}\> = \tfrac18 (W_{jk,0} + \ri W_{jk,1}), \qquad \<z^{(k)}, w^{(j)}\> = \tfrac18 (-W_{jk,0} + \ri W_{jk,1}),
  \end{gather*}
  and so the space of the inner products of the pairs of columns of $Z$ is spanned by the polynomials $\{W_{jk,\alpha}, W_{jk,0}\}$, and hence is the complexification of the space $\cL$, by~\eqref{eq:KvfH}.

  For the $4 \times 4$ determinants, we use the following easy-to-check identity: for any four vectors $y^1,y^2,y^3,y^4 \in \bc^4$, one has
  \begin{equation*}
  \det(y^1 \, | \, y^2 \, | \, y^3 \, | \, y^4) =\<y^1, y^4\>\<y^2, y^3\> - \<y^1, y^3\>\<y^2, y^4\> + \sum\nolimits_{\alpha=1}^{3} \<y^1, \rL_\alpha y^2\>\<y^3, \rL_\alpha y^4\>.
  \end{equation*}
  It follows that modulo the inner products, the $4 \times 4$ determinants of the matrix $Z$ are given by the expressions $\sum_{\alpha=1}^{3} \<y^1, \rL_\alpha y^2\>\<y^3, \rL_\alpha y^4\>$, where each $y^s$ is either some $z^{(j)}$ or some $w^{(k)}$. Using~\eqref{eq:xptozw} we get for $j \ne k$:
  \begin{gather*}
    \<z^{(j)}, \rL_\a z^{(k)}\> = \frac18 \Big\langle \begin{pmatrix} 0 & R_2 - \ri R_3 \\ -R_2 + \ri R_3 & 0 \end{pmatrix} \begin{pmatrix} \rL_\alpha &  0 \\ 0 & \rL_\alpha \end{pmatrix} \begin{pmatrix} x^{(j)} \\ x^{(k)} \end{pmatrix}, \begin{pmatrix} p^{(j)} \\ p^{(k)} \end{pmatrix} \Big\rangle = \<S^{(1)}_{jk} J_\alpha X, P\>,\\
    \<w^{(j)}, \rL_\a w^{(k)}\> = \frac18 \Big\langle \begin{pmatrix} 0 & R_2 + \ri R_3 \\ -R_2 - \ri R_3 & 0 \end{pmatrix} \begin{pmatrix} \rL_\alpha &  0 \\ 0 & \rL_\alpha \end{pmatrix} \begin{pmatrix} x^{(j)} \\ x^{(k)} \end{pmatrix}, \begin{pmatrix} p^{(j)} \\ p^{(k)} \end{pmatrix} \Big\rangle = \<S^{(2)}_{jk} J_\alpha X, P\>, \\
    \<z^{(j)}, \rL_\a w^{(k)}\> = \frac18 \Big\langle \begin{pmatrix} 0 & -I_4 + \ri R_1 \\ -I_4 - \ri R_1 & 0 \end{pmatrix} \begin{pmatrix} \rL_\alpha &  0 \\ 0 & \rL_\alpha \end{pmatrix} \begin{pmatrix} x^{(j)} \\ x^{(k)} \end{pmatrix}, \begin{pmatrix} p^{(j)} \\ p^{(k)} \end{pmatrix} \Big\rangle = \<S^{(3)}_{jk} J_\alpha X, P\>, \\
    \<z^{(j)}, \rL_\a w^{(j)}\> = -\frac14 \<\rL_\alpha x^{(j)}, p^{(j)}\> = \<S_j J_\alpha X, P\>,
  \end{gather*}
  where the matrix $S^{(1)}_{jk}$ has the matrices $\frac18(R_2 - \ri R_3)$ and $-\frac18(R_2 - \ri R_3)$ in its $(j,k)$-th and $(k,j)$-th $4 \times 4$ blocks, respectively, and zeros elsewhere, and the matrices $S^{(2)}_{jk}$ and $S^{(3)}_{jk}$ are constructed similarly, with the blocks given by the formula above. The matrix $S_j$ has the matrix $-\frac14 I_4$ in its $j$-th diagonal $4 \times 4$ block and zeros elsewhere. Note that all the matrices $S^{(1)}_{jk}, S^{(2)}_{jk},S^{(3)}_{jk},\, j \ne k$, and $S_j$ are symmetric and commute with $J_1,J_2$ and $J_3$, and hence all of them belong to $V^\bc_{n+1}$. Hence any $4 \times 4$ determinant of the matrix $Z$, modulo a quadratic form in the inner products, has the form $T_{S_1,S_2}$, for some $S_1,S_2 \in V^\bc_{n+1}$, as given by~\eqref{eq:hTS}.

  We deduce that the algebra of complex polynomials on $\bc^{8n+8}$ invariant under the action of the group $\SL(2,\bc) \times \Sp(1,\bc)$ given by~(\ref{eq:SL2action}, \ref{eq:Sp1action}) is contained in the algebra generated by the polynomials $W_{jk,\alpha}$ and $W_{jk,0}$ defined by~\eqref{eq:KvfH} and the polynomials $T_{S_1,S_2}$, defined by \eqref{eq:hTS} for $S_1, S_2 \in V^\bc_{n+1}$. As all these polynomials are $(\SL(2,\bc) \times \Sp(1,\bc))$-invariant, the two algebras coincide. Now any real polynomial invariant under the action of the group $\SL(2) \times \Sp(1)$ is also invariant under the action of the group $\SL(2,\bc) \times \Sp(1,\bc)$ and so belongs to the complex algebra generated by the polynomials $W_{jk,\alpha}, \, W_{jk,0}$ and $T_{S_1,S_2}$. As all these polynomials are real, it also belongs to the real algebra generated by them.

  This completes the proof of Proposition~\ref{p:polyH}.

  \begin{remark} \label{rem:SFT}
    The relations between the invariant polynomials of the standard action of the group $\SO(4,\bc)$ on the $n+1$ copies of $\bc^4$ which we used in the proof are given by the
    Second Fundamental Theorem of Invariant Theory~\cite[end of Section~17]{W}. They are generated by three families of polynomials. Firstly, the product of any two $4 \times 4$ determinants is a fourth degree polynomial in the inner products. Secondly, the Gram matrix of any $5$ elements of $\bc^4$ is singular. And thirdly, evaluation of the exterior product of any $5$ elements of $\bc^4$ on any other element gives a zero.

    In application to Killing tensor fields on $\bh P^n$, we also note that the map $K \mapsto \overline{K}$ (introduced in Subsection~\ref{ss:subm}) obviously has a nontrivial kernel: the ideal generated by the $1$-forms dual to $\xi_1, \xi_2$ and $\xi_3$.
  \end{remark}

\subsection{Tensorial calculations. Proof of Lemma~\ref{l:bracketsH}}
\label{ss:tensorial}

In this subsection, we verify the commutation relations in Lemma~\ref{l:bracketsH}.

It will be much easier to work with $\Sp(1)$-invariant vertical vector fields. So throughout this subsection only, we adopt the following notation convention (which differs from that in the rest of Sections~\ref{s:HPn}). We choose the vertical orthonormal frame $\xi_1, \xi_2, \xi_3$ to be $\Sp(1)$-invariant (there are many such choices: effectively, we choose such frame separately on every fiber $S^3$). We keep the notation $J_\a$ for $J_{\xi_\a}, \, \a =1, 2, 3$, and we specify the orientation of the vertical basis $\{\xi_1, \xi_2, \xi_3\}$ in such a way that $J_1 J_2 = J_3$.

Note that all the definitions~\eqref{eq:def4opH} and all the identities in Lemma~\ref{l:bracketsH} are tensorial, and hence do not depend on a particular choice of the vector fields.

\subsubsection{O'Neill formulas}
\label{sss:O'Neill}

For vertical $\Sp(1)$-invariant vector fields $\xi, \eta$ and horizontal $\Sp(1)$-invariant vector fields $Y, Z$, then $(\nabla_Y Z)^\mathcal{H}$ is basic and the following holds:
\begin{equation}\label{eq:nablaH1}
\begin{gathered}
  \nabla_Y Z = \left(\nabla_Y Z \right)^\mathcal{H} -\sum\nolimits_{\alpha=1}^{3}\<J_\alpha Y,Z\>\xi_\alpha, \\
  \nabla_Y \xi = J_\xi Y + (\nabla_Y \xi)^\mathcal{V}, \qquad \nabla_\xi \eta = \nabla^F_\xi \eta, \qquad \nabla_\xi Y = J_\xi Y,
\end{gathered}
\end{equation}
where $\nabla, \nabla^*$ and $\nabla^F$ are the Levi-Civita connections on $S^{4n+3}, \, \bh P^n$ and the fiber $S^3$, respectively, and the superscript $V$ denotes the vertical component and $\nabla^*_{\pi_* Y} \pi_* Z = \pi_*(\nabla_Y Z)$. Note that for vertical $\Sp(1)$-invariant vector fields, $\nabla^F$ is given by Koszul formula; assuming the $\Sp(1)$-invariant vector fields $\xi_1, \xi_2, \xi_3$ are chosen in such a way that $J_1J_2=J_3$, we have
\begin{equation}\label{eq:nablaxixi}
  \nabla_{\xi_\a} \xi_\a = 0, \qquad \nabla_{\xi_\a} \xi_\b = \xi_\gamma, \qquad \nabla_{\xi_\a} \xi_\gamma = - \xi_\b,
\end{equation}
where $(\a,\b,\gamma)$ is a cyclic permutation of $(1,2,3)$.

From~\eqref{eq:nablaH1} we obtain that for a tensor field $T \in \ocS^{a,b}$ and $\Sp(1)$-invariant horizontal vector fields $Y, Z$ and vertical vector field $\eta$, we have
\begin{align}\label{eq:nablaTap1}
  (\nabla_Z T)(\eta^{a+1},Y^{b-1}) & = -(a+1) T(\eta^a, J_\eta Z,Y^{b-1}), \\
  (\nabla_Z T)(\eta^{a-1},Y^{b+1}) & = (b+1) \sum\nolimits_{\a=1}^{3} T(\eta^{a-1}, \xi_\a,Y^b) \<J_\a Z,Y\>. \label{eq:nablaTam1}
\end{align}

\subsubsection{Specifying vector fields at a point}
\label{sss:point}

We simplify the subsequent calculations by showing that all the vectors we need at a point can be extended to $\Sp(1)$-invariant vector fields in a neighbourhood in such a way that all non-tensorial components of their derivatives at that point in O'Neill formulas are zeros.

\begin{lemma} \label{l:atapoint}
  Let $p \in S^{4n+3}$, and let $\{e_i^0\}$ and $\{\xi_\a^0\}$ be orthonormal horizontal and vertical bases in $T_pS^{4n+3}$, respectively. Let $\xi^0$ and $Y^0$ be a vertical and a horizontal vector in $T_pS^{4n+3}$, respectively. Then on a neighbourhood of $p$, there exist an orthonormal, $\Sp(1)$-invariant horizontal frame $\{e_i\}$, an orthonormal, $\Sp(1)$-invariant vertical frame $\{\xi_\a\}$, an $\Sp(1)$-invariant vertical vector field $\xi$ and an $\Sp(1)$-invariant horizontal vector field $Y$ such that
  \begin{equation}\label{eq:atapoint}
    \begin{gathered}
    e_i(p)=e_i^0, \qquad \xi_\a(p) = \xi_\a^0, \qquad Y(p)=Y^0, \qquad \xi(p) = \xi^0 \qquad \text{and} \\
    (\nabla_Z Y (p))^\mathcal{H} = (\nabla_Z e_i (p))^\mathcal{H} = 0, \qquad (\nabla_Z \xi (p))^\mathcal{V} = (\nabla_Z \xi_\a (p))^\mathcal{V} = 0,  \\
    (\nabla_Z (J_{\a} e_i) (p))^\mathcal{H} = (\nabla_Z (J_{\a} Y) (p))^\mathcal{H} = (\nabla_Z (J_\xi e_i) (p))^\mathcal{H} = (\nabla_Z (J_\xi Y) (p))^\mathcal{H} = 0,
    \end{gathered}
  \end{equation}
  for any horizontal $Z \in T_pS^{4n+3}$.
\end{lemma}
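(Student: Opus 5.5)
The construction is local and runs fibrewise and horizontally: first build the frames on $\bh P^n$, then lift them to $S^{4n+3}$, and finally handle the vertical fields separately on each fibre.

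\textbf{Horizontal fields.} Let $b=\pi(p)$. I choose normal coordinates on $\bh P^n$ centred at $b$ and use them to define vector fields $\bar e_i$ and $\bar Y$ near $b$. These are obtained by parallel transport along radial geodesics of $\on$ starting from $\pi_* e_i^0$ and $\pi_* Y^0$. Then $\{\bar e_i\}$ is orthonormal and $\on \bar e_i(b)=\on \bar Y(b)=0$. Their horizontal lifts $e_i$ and $Y$ are basic, so they are $\Sp(1)$-invariant and orthonormal. Since $\pi_*(\nabla_Z e_i)^{\mathcal H}=\on_{\pi_*Z}\bar e_i$, we get $(\nabla_Z e_i(p))^{\mathcal H}=0$ for horizontal $Z$, and likewise for $Y$. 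At $p$ they equal $e_i^0$ and $Y^0$, because the lift of $\pi_*e_i^0$ at $p$ is $e_i^0$.

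\textbf{Vertical fields.} On $\pi^{-1}(U)$, $U$ a small ball around $b$, take a local section $s\colon U\to S^{4n+3}$ with $s(b)=p$ which is horizontal at $b$ to first order, i.e.\ $ds_b$ maps into $\mathcal H_p$. For instance, horizontally lift radial geodesics from $b$ starting at $p$. On $s(U)$ define $\xi_\a$ as the $\nabla$-parallel transport of $\xi_\a^0$ along these horizontal radial curves, followed by orthogonal projection to $\mathcal V$ and Gram--Schmidt. Then extend to all of $\pi^{-1}(U)$ by the $\Sp(1)$-action, $\xi_\a(g\cdot q)=g_*\xi_\a(q)$. This extension is well defined since the action is free, and it is automatically invariant and orthonormal. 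Do the same for $\xi$ from $\xi^0$. Along horizontal directions at $p$, the projection step kills the horizontal part of the derivative, which is $J_\xi Z$ by \eqref{eq:nablaH1}. Hence $(\nabla_Z\xi_\a(p))^{\mathcal V}=0$ and $(\nabla_Z\xi(p))^{\mathcal V}=0$ for horizontal $Z$. An orthonormal basis can be fixed with the orientation $J_1J_2=J_3$, as the convention requires.

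\textbf{The $J$-terms.} It remains to show $(\nabla_Z(J_\a e_i)(p))^{\mathcal H}=0$ and the analogous statements for $J_\a Y$, $J_\xi e_i$ and $J_\xi Y$. I plan to use $J_\xi Y$ as the restriction of the ambient linear operator $\sum_\a\<\xi,\xi_\a\>J_\a$ to horizontal vectors. Differentiating gives
\[
\nabla_Z(J_\eta W)=J_{\nabla_Z\eta}W+J_\eta\nabla_ZW+(\text{terms from }\nabla J_\a),
\]
valid for $\eta\in\{\xi,\xi_\a\}$ and $W\in\{e_i,Y\}$. The $\Sp(1)$-invariant vertical fields correspond to the quaternionic structure on $\bh P^n$, and on the sphere they are realised via the right-multiplication matrices. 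The second term has vanishing horizontal part, because $(\nabla_ZW)^{\mathcal H}=0$ and $J_\eta$ maps $\mathcal V$ into $\mathcal V$ up to a normal component. The first term vanishes because $(\nabla_Z\eta)^{\mathcal V}=0$ and $J$ applied to the horizontal part $J_\eta Z$ is again handled by tensoriality.

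\textbf{Main obstacle.} The hardest step is exactly this last verification, namely that the quaternionic structure $\{J_\eta\}$ on $\mathcal H$ is $\nabla^{\mathcal H}$-parallel along horizontal directions in the sense above. Equivalently, it is the statement that the quaternionic Kähler structure on $\bh P^n$ is parallel. I would try to prove it by projecting to $\bh P^n$: $\pi_*(J_\eta W)$ equals $\bar J\,\pi_*W$ for a local section $\bar J$ of the quaternionic bundle determined by $\eta$ along $s$. This $\bar J$ has $\on\bar J(b)=0$ because $\eta$ was parallel-transported. The vanishing then follows from $\on_{\pi_*Z}(\bar J\bar W)=(\on\bar J)\bar W+\bar J\on\bar W=0$ at $b$.
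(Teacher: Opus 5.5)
The gap is the last line of \eqref{eq:atapoint}, which is the step you yourself flag as the main obstacle.

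Your horizontal fields (a radially parallel frame on $\bh P^n$, lifted) and your vertical fields (parallel transport along a first-order horizontal section, vertical projection, Gram--Schmidt, then $\Sp(1)$-extension) do give the first two lines of \eqref{eq:atapoint}. This is equivalent to the paper's construction, which uses the frame $g^{-1/2}\partial_j$ and the gauge rotation $\exp(-\sum x^iK_i)$. Your justification there is garbled, though. The projection cannot remove the horizontal part $J_\xi Z$ of $\nabla_Z\xi_\alpha$, which O'Neill forces. What vanishes is the vertical part, because the discarded horizontal component is zero at $p$ and the vertical part of $\nabla_Z$ of a horizontal field is tensorial.

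For the $J$-terms, your product rule never shows that the ``terms from $\nabla J_\alpha$'' vanish. The proposed fix, $\on\bar J(b)=0$ ``because $\eta$ was parallel-transported'', restates the claim rather than proving it. Parallelism of the quaternionic structure of $\bh P^n$ only says that $\on$ preserves the rank-$3$ subbundle of $\operatorname{End}(T\bh P^n)$. You also need that, under $\eta\mapsto\bar J_\eta$, the induced connection on that subbundle coincides with the connection $\eta\mapsto(\nabla_Z\eta)^{\mathcal V}$ that the sphere induces on invariant vertical fields. That identification is exactly what the last line of the lemma encodes, and nothing in your argument establishes it. The paper proves it via curvature. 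At $p$ one has $(R(X_1,X_2)\eta)^{\mathcal H}=(\nabla_{X_1}(J_\eta X_2)-\nabla_{X_2}(J_\eta X_1))^{\mathcal H}$. Since $R(X_1,X_2,\eta,X_3)=0$ on the round sphere, $X_1\<J_\eta X_2,X_3\>(p)$ is symmetric in the first two arguments and skew in the last two, hence zero.

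Your product-rule idea can close the gap without passing to $\bh P^n$. Take the $J_\alpha$ in it to be the constant complex structures of $\br^{4n+4}$ generating the $\Sp(1)$-action, so that $J_\eta W=\sum_\alpha\<\eta,J_\alpha N\>J_\alpha W$.
\begin{itemize}
\item Then $Z\<\eta,J_\alpha N\>=\<(\nabla_Z\eta)^{\mathcal V},J_\alpha N\>+\<\eta,J_\alpha Z\>$, and the last term is zero because $J_\alpha Z$ is horizontal.
\item Since $J_\eta$ preserves $\Span(N,J_1N,J_2N,J_3N)$ and maps horizontal vectors to horizontal ones, you get $(\nabla_Z(J_\eta W))^{\mathcal H}=J_{(\nabla_Z\eta)^{\mathcal V}}W+J_\eta(\nabla_Z W)^{\mathcal H}$.
\item Both terms vanish at $p$ by the first two lines of \eqref{eq:atapoint}.
\end{itemize}
This is shorter than the paper's curvature argument but relies on the explicit embedding. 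As your proposal stands, this computation is not carried out. Separately, invariant vertical fields are not realised by right multiplications, since those do not preserve the $\Sp(1)$-orbits.
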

\begin{proof} 
  Let $x^i, \, i =1, \dots, 4n$, be geodesic normal coordinates on $\bh P^n$ centered at the point $o=\pi(p)$ such that $\frac{\db}{\db x^i}(o)=\pi_* e_i(p)$. The metric tensor of $\bh P^n$ on a neighbourhood of $o$ is given by $g_{ij}(x) = \delta_{ij} - \frac13 R^*_{iklj} x^kx^l + \dots$. Define a symmetric matrix $S(x)=S_{ij}(x)$ by $S(x)=g^{-1/2}(x)$, so that $S_{ij}(x) = \delta_{ij} + \frac16 R^*_{iklj} x^kx^l + \dots$. Then the vector fields $e_i^* = \sum_{j=1}^{4n} S_{ij}(x) \frac{\db}{\db x^j}$ are orthonormal and $\nabla^*_{Z^*} e_i^*(p) = 0$, for any horizontal vector $Z^* \in T_o \bh P^n$. Hence the basic vector fields $e_i$ obtained by lifting $e_i^*, \, i=1, \dots, n$, satisfy the equations $e_i(p)=e_i^0$ and $(\nabla_Z e_i (p))^\mathcal{H} = 0$, as required. Moreover, the (basic) vector field $Y = \sum_{i=1}^{4n} \<Y^0,e_i^0\> e_i$ satisfies the equations $Y(p)=Y^0$ and $(\nabla_Z Y (p))^\mathcal{H} = 0$.

  To construct the vector fields $\xi_\a$, we take an arbitrary smooth $\Sp(1)$-invariant vertical frame $\{\eta_\a\}$ on a neighbourhood of $p$ such that $\eta_\a(p) = \xi_\a^0$ and denote $\theta_\a^\b(Z) = \<\nabla_Z \eta_\a,\eta_\b\>$ for a horizontal vector field $Z$. At the point $p$, we denote $K_i \in \so(3)$ the constant matrix defined by $(K_i)_\a^\b = \theta_\a^\b(e_i)(p)$. Then the $\Sp(1)$-invariant, orthonormal vertical vector fields $\xi_\a = \sum_{\a=1}^{3} (\exp(-\sum_{i=1}^{4n} (x^i \circ \pi)K_i))_\a^\beta \eta_\beta$ satisfy the equations $\xi_\a(p) = \xi_\a^0$ and $(\nabla_Z \xi_\a (p))^\mathcal{V} = 0$, as required. Similarly to the above, we can now take $\xi = \sum_{\a=1}^{3} \<\xi^0,\xi_\a^0\> \xi_\a$.

  To prove the equations in the last line of~\eqref{eq:atapoint}, we assume that $X_1,X_2$ and $X_3$ are horizontal, $\Sp(1)$-invariant vector fields on neighbourhood of $p$ such that $(\nabla_Z X_i(p))^\mathcal{H}=0$, and $\eta$ is a vertical, $\Sp(1)$-invariant vector field such that $(\nabla_Z \eta(p))^\mathcal{V}=0$, for any horizontal vector field $Z$. From~\eqref{eq:nablaH1}, at the point $p$ we get $(\nabla_{X_1}\nabla_{X_2}\eta)^\mathcal{H} = (\nabla_{X_1}(J_\eta X_2))^\mathcal{H}$, as $(\nabla_{X_2}\eta(p))^\mathcal{V} = 0$. Moreover, by~\eqref{eq:nablaH1} at the point $p$ we have $(\nabla_{\nabla_{X_1}X_2}\eta)^\mathcal{H} = J_\eta ((\nabla_{X_1}X_2)^\mathcal{H}) + (\nabla_{(\nabla_{X_1}X_2)^\mathcal{V}}\eta)^\mathcal{H}= 0$, which gives $(\nabla_{X_1}\nabla_{X_2}\eta - \nabla_{\nabla_{X_1}X_2}\eta)^\mathcal{H} = (\nabla_{X_1}(J_\eta X_2))^\mathcal{H}$. Subtracting the same expression, with $X_1$ and $X_2$ interchanged we obtain $(R(X_1,X_2)\eta)^\mathcal{H} = (\nabla_{X_1}(J_\eta X_2) - \nabla_{X_2}(J_\eta X_1))^\mathcal{H}$. As $R(X_1,X_2,\eta,X_3) = 0$ we get $0=\<\nabla_{X_1}(J_\eta X_2),X_3\> - \<\nabla_{X_2}(J_\eta X_1),X_3\> = X_1\<J_\eta X_2,X_3\> - X_2 \<J_\eta X_1,X_3\>$ at $p$. It follows that the trilinear form $(X_1,X_2,X_3) \mapsto X_1\<J_\eta X_2,X_3\>(p)$ is symmetric by the first two arguments and is antisymmetric by the second two, from which $X_1\<J_\eta X_2,X_3\>(p) = 0$, and so $\<\nabla_{X_1}(J_\eta X_2),X_3\>(p) = 0$. Hence $(\nabla_{X_1}(J_\eta X_2)(p))^\mathcal{H} = 0$, and the equations in the last line of~\eqref{eq:atapoint} follow.
\end{proof}

By Lemma~\ref{l:atapoint} and from~\eqref{eq:nablaH1}, in all tensorial calculations below, we can assume that all the vertical vector fields $\eta = \xi, \xi_\a$ and all the horizontal vector fields $X=e_i, Y, J_\eta e_i, J_\eta Y$ are chosen to be $\Sp(1)$-invariant and to satisfy the equations
\begin{equation}\label{eq:nablaHatp}
\nabla_Z \eta = J_\eta Z, \qquad \nabla_Z X = - \sum\nolimits_{\a=1}^{3}\<J_\a Z, X\>\xi_\a,
\end{equation}
at a point $p \in S^{4n+3}$, for any horizontal $Z \in T_pS^{4n+3}$.

\subsubsection{Proof of identities~\eqref{eq:iJH}, \eqref{eq:nuJH} and \eqref{eq:tJH} of Lemma~\ref{l:bracketsH}}
\label{sss:firstthreecomm}

We take a point $p \in S^{4n+3}$ and assume that in a neighbourhood of $p$, all the vector fields below are chosen as in the last paragraph of Subsection~\ref{sss:point}.

We start with~\eqref{eq:tJH}. The fact that $[\tau, \rJ] = [\tau, \nu] = 0$ easily follows from the definition~\eqref{eq:def4opH}.

To see that $[\delta, \nu]=0$ we note that $\delta \nu T, \, \nu \delta T \in \ocS^{a,b+1}$ for $T  \in \ocS^{a,b}$, and so we compute: $(\delta (\nu T))(\xi^a, Y^{b+1}) = (b+1) (\nabla_Y (\nu T))(\xi^a, Y^b) = (b+1) Y ((\nu T)(\xi^a, Y^b)) = (b+1) \tbinom{a}{2}  \sum\nolimits_{\a=1}^{3} Y(\|\xi\|^2 T (\xi_\a^2, \xi^{a-2}, Y^b)) = (b+1) \tbinom{a}{2} \|\xi\|^2 \sum\nolimits_{\a=1}^{3} (\nabla_Y T) (\xi_\a^2, \xi^{a-2}, Y^b)$. Reversing the order we get: $(\nu (\delta T))(\xi^a, Y^{b+1}) = \tbinom{a}{2} \|\xi\|^2 \sum\nolimits_{\a=1}^{3} (\delta T) (\xi_\a^2, \xi^{a-2}, Y^b) = (b+1) \times$ $\times \tbinom{a}{2} \|\xi\|^2 \sum\nolimits_{\a=1}^{3} (\nabla_Y T) (\xi_\a^2, \xi^{a-2}, Y^b)$, as required.

For~\eqref{eq:nuJH}, we have $\nu \rJ T, \, \rJ \nu T \in \ocS^{a+1,b}$ for $T \in \ocS^{a,b}$, and so $(\nu (\rJ \, T))(\xi^{a+1}, Y^{b}) = \tbinom{a+1}{2} \times$  $\times \|\xi\|^2 \sum\nolimits_{\a=1}^{3} (\rJ \, T) (\xi_\a^2, \xi^{a-1}, Y^b) = \tbinom{a+1}{2} b \|\xi\|^2 \sum\nolimits_{\a=1}^{3} \tfrac{1}{a+1}((a-1)T (\xi_\a^2, \xi^{a-2}, Y^{b-1},J_\xi Y) + 2T (\xi_\a, \xi^{a-2}, Y^b,J_{\xi_\a} Y))$. On the other hand, $(\rJ \, (\nu T))(\xi^{a+1}, Y^{b}) = b (\nu T)(\xi^a, Y^{b-1},J_\xi Y) = b \tbinom{a}{2} \|\xi\|^2 \sum\nolimits_{\a=1}^{3} T(\xi_\a^2,\xi^{a-2}, Y^{b-1},J_\xi Y)$, and the claim follows.

To prove~\eqref{eq:iJH} we note that for $T  \in \ocS^{a,b}$, one has $\iota \rJ \, T, \, \rJ \iota \, T \in \ocS^{a+2,b-1}$ and $\delta \tau  T, \, \tau \delta \, T \in \ocS^{a,b-1}$. We compute:
\begin{align*}
  (\iota (\rJ \, T)) & (\xi^{a+2}, Y^{b-1}) = \sum\nolimits_{i=1}^{4n} (\nabla_{e_i} (\rJ \, T))(\xi^{a+1}, Y^{b-1},J_\xi e_i) \\
   & = \sum\nolimits_{i=1}^{4n} e_i((\rJ \, T))(\xi^{a+1}, Y^{b-1},J_\xi e_i))\\
   &= \sum\nolimits_{i=1}^{4n} e_i((b-1) T(\xi^a, Y^{b-2},J_\xi Y, J_\xi e_i) - \|\xi\|^2 T(\xi^a, Y^{b-1}, e_i)) \\
   & = \sum\nolimits_{i=1}^{4n} ((b-1) (\nabla_{e_i} T)(\xi^a, Y^{b-2},J_\xi Y, J_\xi e_i) - \|\xi\|^2 (\nabla_{e_i} T)(\xi^a, Y^{b-1}, e_i)), \\
  (\rJ \, (\iota T)) & (\xi^{a+2}, Y^{b-1}) = (b-1) (\iota T)(\xi^{a+1}, Y^{b-2},J_\xi Y) \\
  & = (b-1) \sum\nolimits_{i=1}^{4n} (\nabla_{e_i} T)(\xi^a, Y^{b-2},J_\xi Y, J_\xi e_i),
\end{align*}
and so $([\iota, \rJ] \, T)(\xi^{a+2}, Y^{b-1}) = - \|\xi\|^2 \sum\nolimits_{i=1}^{4n} (\nabla_{e_i} T)(\xi^a, Y^{b-1}, e_i)$. Furthermore,
\begin{align*}
  (\delta (\tau T)) & (\xi^a, Y^{b-1}) = (b-1)(\nabla_Y T)(\xi^a, Y^{b-2})  = (b-1) Y(T(\xi^a, Y^{b-2})) \\
   & = (b-1) \sum\nolimits_{i=1}^{4n} Y(T(\xi^a, Y^{b-2}, e_i, e_i)) = (b-1) \sum\nolimits_{i=1}^{4n} (\nabla_Y T)(\xi^a, Y^{b-2}, e_i, e_i),\\
  (\tau (\delta T)) & (\xi^a, Y^{b-1}) = \sum\nolimits_{i=1}^{4n} (\delta \, T)(\xi^a, Y^{b-1}, e_i, e_i)\\
   &= \sum\nolimits_{i=1}^{4n} ((b-1) (\nabla_Y T)(\xi^a, Y^{b-2}, e_i, e_i) + 2 (\nabla_{e_i} T)(\xi^a, Y^{b-1}, e_i)),
\end{align*}
from which $([\delta, \tau] \, T)(\xi^a, Y^{b-1}) = - 2 \sum\nolimits_{i=1}^{4n} (\nabla_{e_i} T)(\xi^a, Y^{b-1}, e_i)$, and~\eqref{eq:iJH} follows.

\subsubsection{Proof of identity~\eqref{eq:diH} of Lemma~\ref{l:bracketsH}}
\label{sss:iotadelta}

For $T \in \ocS^{a,b}$, we have $\iota T \in \ocS^{a+1,b-1}, \, \delta T \in \ocS^{a,b+1}$, and so $\iota\delta T, \delta\iota T \in \ocS^{a+1,b}$.

  As before, we assume that in a neighbourhood of a point $p \in S^{4n+3}$, all the vector fields below are chosen as in the last paragraph of Subsection~\ref{sss:point}.

  We compute at the point $p$ using~\eqref{eq:def4opH} and~\eqref{eq:nablaHatp}: 
  \begin{equation} \label{eq:diT1}
  \begin{split}
    (\delta(\iota T))(\xi^{a+1},Y^b) & = b (\nabla_Y (\iota T))(\xi^{a+1},Y^{b-1}) = b Y \big((\iota T)(\xi^{a+1},Y^{b-1})\big) \\
    &= b \sum\nolimits_{i=1}^{4n} Y \Big((\nabla_{e_i} T)(\xi^a, Y^{b-1},J_\xi e_i)\Big) = \Psi_1 + \Psi_2 + \Psi_3 + \Psi_4,
  \end{split}
  \end{equation}
  where
  \begin{equation} \label{eq:diT1terms}
  \begin{split}
    \Psi_1 &= b \sum\nolimits_{i=1}^{4n} (\nabla^2_{Y,e_i} T) (\xi^a, Y^{b-1},J_\xi e_i), \\
    \Psi_2 &= b \sum\nolimits_{i=1}^{4n} (\nabla_{\nabla_Y e_i}T)(\xi^a, Y^{b-1},J_\xi e_i), \\ 
    \Psi_3 &= a b \sum\nolimits_{i=1}^{4n} (\nabla_{e_i}T)(\xi^{a-1}, J_\xi Y, Y^{b-1},J_\xi e_i), \\ 
    \Psi_4 &= - b \sum\nolimits_{i=1}^{4n} \sum\nolimits_{\a=1}^{3} \<J_\a Y, J_\xi e_i\>(\nabla_{e_i}T)(\xi^a,\xi_\a, Y^{b-1}). 
  \end{split}
  \end{equation}
  By~\eqref{eq:nablaTam1}, for the expression $\Psi_3$ in~\eqref{eq:diT1terms} we get:
  \begin{multline*}
    (\nabla_{e_i}T)(\xi^{a-1}, J_\xi Y, Y^{b-1},J_\xi e_i) = \sum\nolimits_{\a=1}^{3} \big((b-1) \<J_\a e_i,Y\> T(\xi^{a-1}, \xi_\a,J_\xi Y, Y^{b-2},J_\xi e_i) \\
    + \<J_\a e_i, J_\xi Y\> T(\xi^{a-1}, \xi_\a, Y^{b-1},J_\xi e_i) + \<J_\a e_i,J_\xi e_i\> T(\xi^{a-1}, \xi_\a, J_\xi Y, Y^{b-1}) \big),
  \end{multline*}
  and so
  \begin{multline*}
    \Psi_3 = 4nab \, T(\xi^a, J_\xi Y, Y^{b-1}) \\
    - ab \sum\nolimits_{\a=1}^{3} \Big((b-1) T(\xi^{a-1}, \xi_\a,J_\xi Y, Y^{b-2},J_\xi J_\a Y) + T(\xi^{a-1}, \xi_\a, Y^{b-1},J_\xi J_\a J_\xi Y)\Big)\\
    =  (4n+2) ab \, T(\xi^a, J_\xi Y, Y^{b-1}) - ab \|\xi\|^2 \sum\nolimits_{\a=1}^{3}  T(\xi^{a-1}, \xi_\a, Y^{b-1}, J_\a Y) \\
    - ab(b-1) \sum\nolimits_{\a=1}^{3} T(\xi^{a-1}, \xi_\a,J_\xi Y, Y^{b-2},J_\xi J_\a Y),
  \end{multline*}
  where we have used the facts that $J_\xi J_\a J_\xi = -2 \<\xi,\xi_\a\> J_\xi + \|\xi\|^2 J_\a$.

  For $\Psi_4$, we have $(\nabla_{e_i}T)(\xi^a,\xi_\a, Y^{b-1}) = - a T(\xi^{a-1},\xi_\a, J_\xi e_i, Y^{b-1}) - T(\xi^a, J_\a e_i, Y^{b-1})$, by~\eqref{eq:nablaTap1}, which gives
  \begin{align*}
    \Psi_4 & = -b \sum\nolimits_{\a=1}^{3} \Big(a T(\xi^{a-1},\xi_\a, J_\xi^2 J_\a Y, Y^{b-1}) + T(\xi^a, J_\a J_\xi J_\a Y, Y^{b-1}\Big) \\
    & =  a b \|\xi\|^2 \sum\nolimits_{\a=1}^{3} T(\xi^{a-1},\xi_\a, J_\a Y, Y^{b-1}) - b T(\xi^a, J_\xi Y, Y^{b-1}),
  \end{align*}
  where we have used the facts that $J_\xi^2 = -\|\xi\|^2 I_{4n}$ and that $\sum\nolimits_{\a=1}^{3} J_\a J_\xi J_\a = J_\xi$.

  To compute $\Psi_2$ given by~\eqref{eq:diT1terms} we note that by~\eqref{eq:nablaHatp} we have $\nabla_Y e_i = - \sum\nolimits_{\a=1}^{3}\<J_\a Y, e_i\>\xi_\a$ at the point $p$ and that $\xi_\a(T(\xi^a, Y^{b-1},J_\xi e_i)) = 0$ by the $\Sp(1)$-invariance. As $\nabla_\eta Z = J_\eta Z$ from~\eqref{eq:nablaH1}, we obtain
  \begin{align*}
    \Psi_2 &=  b \sum\nolimits_{\a=1}^{3} \sum\nolimits_{i=1}^{4n} \<J_\a Y, e_i\> \Big(a T(\xi^{a-1}, \nabla_{\xi_\a} \xi, Y^{b-1},J_\xi e_i) \\
  & \hphantom{b \sum\nolimits_{\a=1}^{3} } +(b-1)T(\xi^a, Y^{b-2},J_\a Y,J_\xi e_i)+T(\xi^a, Y^{b-1},J_\a J_\xi e_i)\Big) \\
  & =  b\sum\nolimits_{\a=1}^{3} \Big(a T(\xi^{a-1}, \nabla_{\xi_\a} \xi, Y^{b-1},J_\xi J_\a Y) \\
  & \hphantom{b \sum\nolimits_{\a=1}^{3} } +(b-1)T(\xi^a, Y^{b-2},J_\a Y,J_\xi J_\a Y)+T(\xi^a, Y^{b-1},J_\a J_\xi J_\a Y)\Big)\\
  &= -a b \|\xi\|^2 \sum\nolimits_{\a=1}^{3} T(\xi^{a-1}, \xi_\a, Y^{b-1}, J_\a Y) + (a - b+2) b T(\xi^a, Y^{b-1},J_\xi Y),
  \end{align*}
  where in the last equation we have used the facts that $\sum\nolimits_{\a=1}^{3} J_\a J_\xi J_\a = J_\xi$ and that $\sum\nolimits_{\a=1}^{3} T(\xi^a, Y^{b-2},J_\a Y,J_\xi J_\a Y) = - T(\xi^a, Y^{b-1},J_\xi Y)$, and moreover, that by~\eqref{eq:nablaxixi} we have $\sum\nolimits_{\a=1}^{3} T(\xi^{a-1}, \nabla_{\xi_\a} \xi, Y^{b-1},J_\xi J_\a Y) = T(\xi^a, Y^{b-1},J_\xi Y)-\|\xi\|^2 \sum\nolimits_{\a=1}^{3} T(\xi^{a-1}, \xi_\a, Y^{b-1}, J_\a Y)$.

  Substituting the expressions for $\Psi_2, \Psi_3$ and $\Psi_4$ into~\eqref{eq:diT1} we obtain:
  \begin{equation} \label{eq:diT2}
  \begin{split}
    (\delta(\iota T))(\xi^{a+1} ,Y^b)= \Psi_1 & - a b (b-1) \sum\nolimits_{\a=1}^{3} T(\xi^{a-1}, \xi_\a, Y^{b-2}, J_\xi Y, J_\xi J_\a Y) \\
    & + b(a (4n +3) - b + 1) T(\xi^a, J_\xi Y, Y^{b-1})\\
    & - ab \|\xi\|^2 \sum\nolimits_{\a=1}^{3} T(\xi^{a-1},\xi_\a, J_\a Y, Y^{b-1}).
  \end{split}
  \end{equation}

  We further compute at the point $p$ by~\eqref{eq:def4opH} and~\eqref{eq:nablaHatp}: 
  \begin{equation} \label{eq:idT1}
  \begin{split}
    (\iota(\delta T))(\xi^{a+1},Y^b) & = \sum\nolimits_{i=1}^{4n} (\nabla_{e_i} (\delta T))(\xi^a, Y^b,J_\xi e_i) \\
    & = \sum\nolimits_{i=1}^{4n} e_i \big((\delta T)(\xi^a, Y^b,J_\xi e_i)) \\
    & = \sum\nolimits_{i=1}^{4n} e_i \big( b(\nabla_Y T)(\xi^a, Y^{b-1},J_\xi e_i) + (\nabla_{J_\xi e_i} T)(\xi^a, Y^b)\big) \\
    & = \Psi_5 + \Psi_6 + \Psi_7 + \Psi_8,
  \end{split}
  \end{equation}
  where
  \begin{equation} \label{eq:idT1terms}
  \begin{split}
    \Psi_5& = \sum\nolimits_{i=1}^{4n} \big( b (\nabla^2_{e_i,Y}T) (\xi^a, Y^{b-1},J_\xi e_i) + (\nabla^2_{e_i,J_\xi e_i}T)(\xi^a, Y^b) \big),\\
    \Psi_6& = b \sum\nolimits_{i=1}^{4n} (\nabla_{\nabla_{e_i}Y} T)(\xi^a, Y^{b-1},J_\xi e_i) + \sum\nolimits_{i=1}^{4n}(\nabla_{\nabla_{e_i} J_\xi e_i} T)(\xi^a, Y^b), \\ 
    \Psi_7& = \sum\nolimits_{i=1}^{4n} \big(ab(\nabla_Y T)(\xi^{a-1}, Y^{b-1},(J_\xi e_i)^2) + a (\nabla_{J_\xi e_i} T)(\xi^{a-1}, J_\xi e_i, Y^b) \big), \\ 
    \Psi_8& = - b \sum\nolimits_{i=1}^{4n} \sum\nolimits_{\a=1}^{3} \Big((b-1)\<J_\a e_i, Y\> (\nabla_Y T)(\xi^a, \xi_\a, Y^{b-2},J_\xi e_i) \\
    & \hphantom{=-b\sum_{i=1}^{4n}}+ \<J_\a e_i ,J_\xi e_i\> (\nabla_Y T)(\xi^a, \xi_\a, Y^{b-1}) + \<J_\a e_i, Y\> (\nabla_{J_\xi e_i} T)(\xi^a, \xi_\a, Y^{b-1})\Big). 
  \end{split}
  \end{equation}
  We simplify the summand on the right-hand side of $\Psi_7$ in~\eqref{eq:idT1terms} using~\eqref{eq:nablaTam1}:
  \begin{multline*}
    ab(\nabla_Y T)(\xi^{a-1}, Y^{b-1},(J_\xi e_i)^2) + a (\nabla_{J_\xi e_i} T)(\xi^{a-1}, J_\xi e_i, Y^b)\\
    = ab \sum\nolimits_{\a=1}^{3} \big(2\<J_\a Y,J_\xi e_i\> T(\xi^{a-1}, \xi_\a, Y^{b-1},J_\xi e_i) + \<J_\a J_\xi e_i, Y\> T(\xi^{a-1}, \xi_a, J_\xi e_i, Y^{b-1}) \big)\\
    = ab \sum\nolimits_{\a=1}^{3} \<J_\a Y,J_\xi e_i\> T(\xi^{a-1}, \xi_\a, Y^{b-1},J_\xi e_i),
  \end{multline*}
  and so $\Psi_7 = ab \|\xi\|^2 \sum\nolimits_{\a=1}^{3} T(\xi^{a-1}, \xi_\a, Y^{b-1},J_\a Y)$.

  Furthermore, by~\eqref{eq:nablaTap1} the summand on the right-hand side of $\Psi_8$ in~\eqref{eq:idT1terms} equals
  \begin{multline*}
    -\sum\nolimits_{\a=1}^{3} \big((b-1) \<J_\a e_i, Y\> ( a T(\xi^{a-1}, \xi_\a, Y^{b-2},J_\xi Y, J_\xi e_i) + T(\xi^a, Y^{b-2}, J_\a Y, J_\xi e_i)) \\
    + \<J_\a e_i, Y\> \big( - \|\xi\|^2 a T(\xi^{a-1}, \xi_\a, e_i, Y^{b-1}) + T(\xi^a, J_\a J_\xi e_i, Y^{b-1})\big)\\
    - (a+1) T (\xi^a, Y^{b-1}, J_\xi Y),
  \end{multline*}
  and so we obtain
  \begin{align*}
    \Psi_8 &= b \sum\nolimits_{\a=1}^{3} \Big((b-1)
     (-a T(\xi^{a-1}, \xi_\a, Y^{b-2},J_\xi Y, J_\xi J_\a Y) - T(\xi^a, Y^{b-2}, J_\a Y, J_\xi J_\a Y)) \\
    & \hphantom{b \sum\nolimits_{\a=1}^{3} \Big(} +\|\xi\|^2 a T(\xi^{a-1}, \xi_\a, J_\a Y, Y^{b-1}) - T(\xi^a, J_\a J_\xi J_\a Y, Y^{b-1})\Big)\\
    & \hphantom{= } + 4n b(a+1) T (\xi^a, Y^{b-1}, J_\xi Y)\\
    &= -a b \sum\nolimits_{\a=1}^{3} \Big((b-1) T(\xi^{a-1}, \xi_\a, Y^{b-2},J_\xi Y, J_\xi J_\a Y) -\|\xi\|^2 T(\xi^{a-1}, \xi_\a, J_\a Y, Y^{b-1}) \Big)\\
    & \hphantom{= }+ b( 4n (a+1) + b - 2) T (\xi^a, Y^{b-1}, J_\xi Y),
  \end{align*}
  using $\sum\nolimits_{\a=1}^{3} J_\a J_\xi J_\a = J_\xi$ and $\sum\nolimits_{\a=1}^{3} T(\xi^a, Y^{b-2}, J_\a Y, J_\xi J_\a Y) = -T(\xi^a, Y^{b-1}, J_\xi Y)$.

  Next we note that the first term on the right-hand side of $\Psi_6$ in~\eqref{eq:idT1terms} equals $-\Psi_2$, as at the point $p$, we have $\nabla_{e_i}Y = - \nabla_Y e_i$, by~\eqref{eq:nablaHatp}. In the second term of $\Psi_6$, we compute $\sum\nolimits_{i=1}^{4n} \nabla_{e_i} J_\xi e_i = \! - \! \sum\nolimits_{i=1}^{4n} \sum\nolimits_{\a=1}^{3} \<J_\a e_i, J_\xi e_i\> \xi_\a = -4n\xi$, and so it equals $4bn T(\xi^a, Y^{b-1},J_\xi Y)$, by~\eqref{eq:nablaH1} and~\eqref{eq:nablaxixi}, since $\xi(T(\xi^a, Y^b)) = 0$, by the $\Sp(1)$-invariance. Hence
  \begin{equation*}
  \Psi_6 = b(4n - a + b-2)) T(\xi^a, Y^{b-1},J_\xi Y) + a b \|\xi\|^2 \sum\nolimits_{\a=1}^{3} T(\xi^{a-1}, \xi_\a, Y^{b-1}, J_\a Y).
  \end{equation*}

  Substituting the expressions for $\Psi_6, \Psi_7$ and $\Psi_8$ into~\eqref{eq:idT1} we obtain:
  \begin{equation} \label{eq:idT2}
  \begin{split}
    (\iota(\delta T))(\xi^{a+1},Y^b) = \Psi_5 & - a b (b-1) \sum\nolimits_{\a=1}^{3} T(\xi^{a-1}, \xi_\a, Y^{b-2},J_\xi Y, J_\xi J_\a Y)\\
    &+ b( 4n (a+2) - a + 2b - 4) T (\xi^a, Y^{b-1}, J_\xi Y)\\
    &+ 3ab \|\xi\|^2 \sum\nolimits_{\a=1}^{3} T(\xi^{a-1}, \xi_\a, Y^{b-1},J_\a Y).
  \end{split}
  \end{equation}

  Finally, the expression $\Psi_1 - \Psi_5$ is the sum of the following two terms (where $R$ is the curvature tensor of $S^{4n+3}$): 
  \begin{multline*}
    b \sum\nolimits_{i=1}^{4n} ((\nabla^2_{Y,e_i} - \nabla^2_{e_i,Y}) T)(\xi^a, Y^{b-1},J_\xi e_i)  = b \sum\nolimits_{i=1}^{4n} (R(Y,e_i) T)(\xi^a, Y^{b-1},J_\xi e_i) \\
    = - b \sum\nolimits_{i=1}^{4n} ((b-1) T(\xi^a, Y^{b-2}, \<e_i,Y\>Y - \|Y\|^2 e_i,J_\xi e_i) + T(\xi^a, Y^{b-1}, -\<Y,J_\xi e_i\>e_i)) \\
    = - b^2 T(\xi^a, Y^{b-2}, Y ,J_\xi Y),
  \end{multline*}
  and (using the fact that the horizontal basis $\{\|\xi\|^{-1} J_\xi e_i\}$ is also orthonormal)
  \begin{multline*} 
    -\sum\nolimits_{i=1}^{4n} (\nabla^2_{e_i,J_\xi e_i} T)(\xi^a, Y^b) = -\tfrac12 \sum\nolimits_{i=1}^{4n} (R(e_i,J_\xi e_i)T)(\xi^a, Y^b) \\
    =  \tfrac12 b \sum\nolimits_{i=1}^{4n} T(\xi^a, Y^{b-1}, \<J_\xi e_i, Y\> e_i - \<e_i, Y\> J_\xi e_i) = - b T(\xi^a, Y^{b-1}, J_\xi Y ).
  \end{multline*}

  Thus subtracting~\eqref{eq:idT2} from~\eqref{eq:diT2} we obtain
  \begin{multline*}
    ((\delta\iota - \iota\delta) T)(\xi^{a+1} ,Y^b) = -4b(2n+b-a-1) T(\xi^a, J_\xi Y, Y^{b-1}) \\
    -4ab \|\xi\|^2 \sum\nolimits_{\a=1}^{3} T(\xi^{a-1}, \xi_\a, Y^{b-1},J_\a Y),
  \end{multline*}
  which is equivalent to~\eqref{eq:diH} in view of~\eqref{eq:def4opH} and~\eqref{eq:nuJH}.

  This completes the proof of Lemma~\ref{l:bracketsH}.

\end{document}